\documentclass[10pt]{article}
\oddsidemargin 0in
\evensidemargin0in
\textheight 8.1in \textwidth 6.5in

\linespread{1}
\usepackage{amsmath, amssymb, amsthm, amscd,amsfonts,latexsym,hyperref}
\newtheorem{theorem}{Theorem}[section]
\newtheorem{lemma}[theorem]{Lemma}

\newtheorem{proposition}[theorem]{Proposition}
\theoremstyle{definition}
\newtheorem*{remarks}{Remarks}
\newtheorem*{remark}{Remark}
\newtheorem*{definition}{Definition}

\newcommand{\norm}[1]{\left|\left|#1\right|\right|}

\begin{document}

\title{Step Size in Stein's Method of Exchangeable Pairs}
\author{Nathan Ross \\ University of Southern California \\  Los Angeles CA 90089 \\ Email: nathanfr@usc.edu}
\maketitle

\begin{abstract} 
Stein's method of exchangeable pairs is examined through five examples in relation to Poisson and
normal distribution approximation.  In particular, in the case where the exchangeable pair is constructed from a reversible
Markov chain, we analyze how modifying the step size of the chain in a natural way affects the
error term in the 
approximation acquired through Stein's method.  It has been noted for the normal approximation that
smaller step sizes may yield better bounds, and we obtain the first rigorous results that verify this intuition. For the examples associated
to the normal distribution,
the bound on the error is expressed in terms of the spectrum of the underlying chain, a characteristic of the chain related to convergence rates.
The Poisson approximation
using exchangeable pairs is less studied than the normal, but in the examples presented here the same principles hold.   
\end{abstract}

\section{Introduction}

Stein's method has become a powerful tool in approximating probability
distributions and
proving central limit theorems. The various formulations of the method rely on exploiting the characterizing operator or ``Stein equation'' of the
distribution.
The characterizing
operator of a random variable $X$ is an operator $S$ such that, for a specified class of functions $A$, $\mathbb{E}Sf(Y)=0$ for all $f$ in $A$ if and only if 
$Y\buildrel d \over =X$.
Stein's
method can be used to quantitatively bound the difference between two random variables, one of which has a known characterizing operator.
In this paper we use theorems that obtain error terms from the characterizing operator through exchangeable pairs.
There are other variations of Stein's method that exploit the characterizing operator in different ways,
for example the zero bias transformation \cite{gol05, gore97}, the size bias coupling \cite{brs89, bhj92, gori96},
dependency graphs \cite{agg89, agg90}, and other ad hoc methods \cite{bol84, cha08, cha09, chsh01}.

For a gentle, intuitive explanation of Stein's method for normal
approximation see \cite{riro00}.  A more rigorous introduction can be found in \cite{stn86} and
similar ideas with more references in
\cite{rei98}. To find an introduction to Stein's method of exchangeable pairs for Poisson approximation
see \cite{cdm05}. For a very thorough introduction to Stein's method in general see \cite{bc05}.

An exchangeable pair is a pair of identically distributed random
variables
$(W,W')$ with the property that the distribution of $(W,W')$ is equal to the distribution of $(W',W)$.
The typical method used to generate useful exchangeable pairs on a finite space $\Omega$
is through a Markov chain $\{X_0,X_1,\ldots\}$ on $\Omega$ reversible
with respect to $\pi$.
If $W$ is a random variable on $\Omega$, then it is easy
to show that setting
$W:=W(X_0)$ and $W':=W(X_1)$ where $X_0$ is chosen according to $\pi$ is an exchangeable pair. This is
not the only way to obtain an exchangeable pair: for exchangeable pairs from non-reversible 
Markov chains see \cite{ful04} and \cite{riro97}.

We will examine how modifying the step size
of the underlying Markov chain in a natural way affects the error term in the approximation acquired through Stein's method. 
In the case where the underlying Markov chain is ergodic, the step size does not necessarily affect the rate of convergence to stationarity
in a monotone way.
However, the rate of convergence is related to the eigenvalues of the Markov chain, and in the examples associated
to the normal distribution we are able to express the bound 
on the error in terms of the eigenvalues.

It will be
obvious in the sequel that modifying the exchangeable pair has a profound
effect on the error term.
Most notably, in the theorems we use, Markov chains with larger steps require more
computational work, and in the case of Poisson approximation, higher moment information.
For the examples presented here, the chains that allow for the easiest computation always yield the best bound. For other examples it is difficult to 
compute the error term for any chain other than the most computationally simple in a form that yields information about the relative sizes of the bounds.
Thus, it is difficult
to take these examples and make a rigorous statement about the step size of the underlying Markov chain and the bound acquired from it in
a general setting.




Section \ref{NA} introduces Stein's method of exchangeable pairs for normal approximation and explains why the effect of step size on the
error term is not obvious.
Sections \ref{binsec}, \ref{krawsec}, and \ref{planchsec} each contain one example of Stein's method's approximation of respectively, the
binomial (with $p=1/2$), Plancherel measure of the Hamming scheme, and Plancherel measure of the irreducible representations of a group G
by the normal
distribution.
Section \ref{lamsec} is tangent to Sections \ref{binsec}, \ref{krawsec}, and \ref{planchsec} in that the bounds on the error from
those sections are restated
in terms of the eigenvalues of the chain.  The
final two sections examine the approximation of the binomial and the negative binomial by the Poisson
distribution.

This is a small step in examining and refining Stein's method to be more widely applicable.  This paper serves to illustrate
the type of computations needed to apply the method and will hopefully yield some insight into the underlying theory behind Stein's method.












       
\section{Normal Approximation}\label{NA}

For normal approximation, we use Stein's original theorem \cite{stn86}
not only because most other exchangeable pair formulations stem from it, but because in many situations
it still yields the best results.  Also, the theorem is stated in terms of the Kolmogorov metric, but the relative size of the bound on the error
is determined by the same terms in an analogous theorem where the Wasserstein metric is used \cite{bccs05}.

\begin{theorem} 
\cite{stn86}
\label{stn86}
Let $(W,W')$ an exchangeable pair of real random variables such that $\mathbb{E}(W'|W)=(1-a)W$
with $0<a<1$. Also, let $\mathbb{E}(W)=0$ and $\mathbb{E}(W^2)=1$. Then for all $x_0$ in $\mathbb{R}$,
\begin{align}
\bigg| \mathbb{P}(W<x_0)-\frac{1}{\sqrt{2\pi}}&\int_{-\infty}^{x_0}e^{\frac{-x^2}{2}}dx \bigg| \notag \\
&\leq 
\frac{\sqrt{Var(\mathbb{E}[(W'-W)^2|W])}}{a}
+(2\pi)^{-\frac{1}{4}}\sqrt{\frac{\mathbb{E}|W'-W|^3}{a}} \label{s861} \\
&\leq
\frac{\sqrt{Var(\mathbb{E}[(W'-W)^2|W])}}{a}+\left[\frac{\mathbb{E}(W'-W)^4}{\pi a}\right]^{1/4}. \label{s862}  
\end{align}
\end{theorem}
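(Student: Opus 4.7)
The plan is to convert the Kolmogorov distance into an expectation via the characterizing Stein equation for the standard normal, and then use the exchangeability of $(W,W')$ together with the regression hypothesis to rewrite that expectation in terms of the two quantities appearing on the right-hand side. Concretely, for each $x_0\in\mathbb{R}$ let $f=f_{x_0}$ denote the bounded solution of
\begin{equation*}
f'(w)-wf(w)=\mathbf{1}_{\{w\le x_0\}}-\Phi(x_0),
\end{equation*}
where $\Phi$ is the standard normal distribution function, so that the Kolmogorov error equals $\mathbb{E}[f'(W)-Wf(W)]$. The starting point is then the classical estimates on $f$ from \cite{stn86}: a uniform bound on $\|f'\|_\infty$ and a ``jump-concentration'' bound controlling the discontinuity of $f'$ at $x_0$.

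The key identity is the antisymmetrization consequence of exchangeability, $0=\mathbb{E}[(W'-W)(f(W')+f(W))]$, which combined with $\mathbb{E}(W'\mid W)=(1-a)W$ yields
\begin{equation*}
\mathbb{E}[Wf(W)]=\frac{1}{2a}\mathbb{E}[(W'-W)(f(W')-f(W))].
\end{equation*}
Writing $f(W')-f(W)=(W'-W)\int_0^1 f'(W+u(W'-W))\,du$ and using the easily-verified identity $\mathbb{E}(W'-W)^2=2a$, I would decompose
\begin{align*}
\mathbb{E}[f'(W)-Wf(W)]
&=-\frac{1}{2a}\mathbb{E}\bigl[f'(W)\,(\mathbb{E}[(W'-W)^2\mid W]-2a)\bigr]\\
&\quad -\frac{1}{2a}\mathbb{E}\!\left[(W'-W)^2\!\int_0^1\!\bigl(f'(W+u(W'-W))-f'(W)\bigr)du\right].
\end{align*}

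The first term is disposed of by Cauchy--Schwarz, using the uniform bound on $f'$ together with the fact that $\mathbb{E}[(W'-W)^2\mid W]-2a$ has mean zero; this produces the $\sqrt{\mathrm{Var}(\mathbb{E}[(W'-W)^2\mid W])}/a$ summand. The main obstacle is the second, remainder term: because the test function is an indicator, $f'$ has a jump at $x_0$ and no ordinary Taylor bound is available. The standard remedy is a Stein-type structural estimate roughly of the form $|f'(u)-f'(v)|\le C(|u|+1)|u-v|+|\mathbf{1}_{\{u\le x_0\}}-\mathbf{1}_{\{v\le x_0\}}|$, after which the jump-indicator contribution is controlled by a concentration estimate for $W$ near $x_0$; combining these gives the $(2\pi)^{-1/4}\sqrt{\mathbb{E}|W'-W|^3/a}$ term with the sharp constant coming from the optimal Stein-solution bounds.

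Finally, (\ref{s862}) is obtained from (\ref{s861}) by a single application of Cauchy--Schwarz: $\mathbb{E}|W'-W|^3\le\sqrt{\mathbb{E}(W'-W)^2}\,\sqrt{\mathbb{E}(W'-W)^4}=\sqrt{2a\,\mathbb{E}(W'-W)^4}$, after which the constants collapse to the stated $(\pi a)^{-1/4}$ factor.
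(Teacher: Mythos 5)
Your deduction of (\ref{s862}) from (\ref{s861}) is exactly the paper's argument: Cauchy--Schwarz gives $\mathbb{E}|W'-W|^3\le\sqrt{\mathbb{E}(W'-W)^2}\sqrt{\mathbb{E}(W'-W)^4}$, and the identity $\mathbb{E}(W'-W)^2=2a\,\mathrm{Var}(W)=2a$ collapses the constants to $(\pi a)^{-1/4}$. For (\ref{s861}) the paper simply cites \cite{stn86}, and your sketch (Stein equation, antisymmetrization of $\mathbb{E}[(W'-W)(f(W')+f(W))]=0$ with the regression hypothesis, splitting into a variance term and a Taylor remainder handled by the jump/concentration estimate) is precisely Stein's original exchangeable-pair argument, so the proposal is correct and follows essentially the same route.
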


\begin{remarks}
\mbox{}
\begin{enumerate}
\item The first bound (\ref{s861}) is from \cite{stn86}, while (\ref{s862}) follows from the
Cauchy-Schwarz inequality and the straightforward computation $\mathbb{E}(W'-W)^2=2a Var(W)$ for an exchangeable pair $(W,W')$ with $\mathbb{E}(W'|W)=(1-a)W$. 

\item A result holds \cite{riro97} which contains error terms similar to Theorem \ref{stn86} and often yields better rates, but 
requires almost sure bounds on $|W - W'|$.

\item It has been shown \cite{roe08} that Theorem \ref{stn86} still holds without exchangeability
assuming instead that $W$ and $W'$ are equally distributed.  However, it is
unclear how useful this observation is in practice as the exchangeability plays a critical
role in defining the pair $(W,W')$ and in computing the error from Theorem \ref{stn86}.  
\end{enumerate}
\end{remarks}

It has been casually noted that
the error term from Theorem \ref{stn86}
should be small when $a$ is small, or equivalently when $W$ and $W'$ are ``close."
One line of thought \cite{riro97} that supports this idea is that if
$(W,W')$ is bivariate standard normal with covariance $(1-a)$, then $Var(\mathbb{E}[(W'-W)^2|W])=a^4Var(W^2)$.
This equation loosely implies that
smaller values of $a$ should yield better bounds if $(W,W')$ are nearly bivariate normal.
Another argument for the idea that the error term should be small when $W$ and $W'$ are close is that the bound from Theorem \ref{stn86}
follows from a Taylor series approximation of $W$ about $W'$. A simple heuristic to illustrate this argument can be found in
\cite{riro00} and \cite{rei98}.  However, the Taylor approximation takes place exclusively in the numerator of the error term and so
this argument does not take into account the denominator which also decreases when $W$ and $W'$ are close (the same observations can be
made directly from the error term).  In accordance with these remarks, we 
will take the value $a$ generated by the exchangeable pair $(W,W')$ of Theorem \ref{stn86} to be a rough
quantitative measure of the ``step size" as
referred to in the introduction.

Finally, we remark here that the families of chains from Sections \ref{krawsec} and \ref{planchsec} have a similar form. Both of these families
of chains are canonically engineered to satisfy the linearity
condition $\mathbb{E}[W'|W]=(1-a)W$.  To elaborate, if as described in the introduction, $(W,W')=(W(X_0),W(X_1))$
(that is, $\{X_0,X_1,\ldots\}$ is a stationary Markov chain reversible with respect to some distribution $\pi$ on a state space $\Omega$ and $W$ is some
function from $\Omega$ into $\mathbb{R}$),
then we have
\begin{align}
\mathbb{E}[W'|X_0=i]&=\sum_{j\in \Omega}\mathbb{P}(X_1=j|X_0=i)W(j)=\left(P\textbf{W}\right)_i, \notag
\end{align}
where $P$ is the transition matrix of the Markov chain and $\textbf{W}$ is the vector $W_i=W(i)$.  Roughly, the linearity condition implies
\[P\textbf{W}=(1-a)\textbf{W},\]
so that $\textbf{W}$ must be an eigenvector of the transition matrix $P$.

In both of Sections \ref{krawsec} and \ref{planchsec} the random variable $W$ under study is a member of an orthogonal (in the $L^2$ sense
with respect to the measure $\pi$ under study) family
of functions on the state space.  
From this point, it is not difficult to canonically define a matrix indexed by $\Omega\times\Omega$, with row sums equal to one,
which has the orthogonal family as eigenvectors,
and satisfies the detailed balance equations for the measure $\pi$
(although the entries are not guaranteed to be positive).  We omit a more detailed abstract formulation, but the main ideas can be found in
\cite{sen01}.















\section{Binomial Distribution} \label{binsec}

    In this section, we examine the most basic example of Stein's method for normal approximation.  It is well known that
the binomial distribution with parameters $n$ and $p$ is approximately normal for large $n$.  Stein's method can be used to
obtain an error term in this approximation. The setup is as follows:  let $(X_1,X_2,\ldots,X_n)$ a random vector where
each $X_i$ is independent and equal to 1 with probability $p$ $(0<p<1)$ and $0$ with probability $(1-p)$.
Then take $X=\sum_{i=1}^nX_i$.  In order to clearly illustrate the typical way we will analyze the rest of the examples,
we will take $p=1/2$ in this section.

The first thing we need to study the error
term from Theorem \ref{stn86} is the family of Markov chains that induce the exchangeable pair.  Given a configuration
of an $n$ dimensional $0-1$ vector, the next step in the chain follows the rule of changing any fixed $i$ coordinates with probability
$b_i/\binom{n}{i}$, so that $\sum_{i=0}^nb_i=1$ ($b_i$ is the probability of moving to a configuration Hamming distance $i$ away).
Since the probability of going from a configuration
$x$ to a configuration $y$ is the same as going from $y$ to $x$, the random vector $(X_1, \ldots, X_n)$ with each coordinate an independent Bernoulli
random variable with parameter $p=1/2$
is clearly reversible with respect to these chains. Setting $X=\sum_{i=1}^nX_i$ and $X'=\sum_{i=1}^nX_i'$
where 
$(X_1', \ldots, X_n')$ is a step in the chain
described above induces an exchangeable pair. In order to apply Theorem \ref{stn86}, the random variable must 
have mean 0 and variance 1. Set $W=\frac{X-\mathbb{E}(X)}{\sqrt{Var(X)}}=\frac{X-np}{\sqrt{np(1-p)}}$, and define $W'$
as $W$ but with $X'$ in place of $X$.  The final hypothesis from Theorem \ref{stn86} is the following lemma.

\begin{lemma}\label{bil}

\[\mathbb{E}(W'|W)=\left(1-2\sum_{i=1}^n\left(\frac{i}{n}\right)b_i\right)W.\]

\end{lemma}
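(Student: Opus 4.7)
The plan is to reduce the computation of $\mathbb{E}(W'\mid W)$ to a coordinatewise calculation of $\mathbb{E}(X_j'\mid X_1,\dots,X_n)$, exploiting the symmetry of the chain.

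First, I would observe that since $W = (X - n/2)/\sqrt{n/4}$ is an affine function of $X$, it suffices to compute $\mathbb{E}(X'\mid X_1,\dots,X_n)$ and show that the result is an affine function of $X$. If I can show $\mathbb{E}(X'\mid X_1,\dots,X_n) = (1-2c)X + nc$ for some constant $c$ depending only on $(b_i)$, then standardizing gives $\mathbb{E}(W'\mid X_1,\dots,X_n) = (1-2c)W$, and because the right side depends only on $W$, the tower property immediately yields $\mathbb{E}(W'\mid W) = (1-2c)W$.

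The core step is then a per-coordinate flip probability calculation. Conditional on the current configuration $(X_1,\dots,X_n)$, a single step of the chain flips coordinate $j$ exactly when the random set of $i$ coordinates chosen to be flipped contains $j$; given that $i$ coordinates are flipped (probability $b_i$), the set is uniform over the $\binom{n}{i}$ possibilities, so the conditional probability that $j$ is among them is $\binom{n-1}{i-1}/\binom{n}{i} = i/n$. Summing over $i$, the unconditional flip probability for coordinate $j$ is
\[
c := \sum_{i=1}^n \frac{i}{n} b_i,
\]
independent of $j$. Hence $\mathbb{E}(X_j'\mid X_j) = (1-c)X_j + c(1-X_j) = (1-2c)X_j + c$.

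Summing over $j$ gives $\mathbb{E}(X'\mid X_1,\dots,X_n) = (1-2c)X + nc$, and substituting into $W' = (X' - n/2)/\sqrt{n/4}$ yields
\[
\mathbb{E}(W'\mid X_1,\dots,X_n) = \frac{(1-2c)X + nc - n/2}{\sqrt{n/4}} = (1-2c)\,\frac{X - n/2}{\sqrt{n/4}} = (1-2c)W,
\]
which is the claimed identity after conditioning further on $W$. No step here is a serious obstacle; the only potential pitfall is a bookkeeping one, namely conflating $\mathbb{E}(W'\mid W)$ with $\mathbb{E}(W'\mid X_1,\dots,X_n)$, which is resolved by noting that the intermediate answer already factors through $W$.
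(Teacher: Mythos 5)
Your proof is correct and follows essentially the same route as the paper: computing the per-coordinate flip probability $i/n$ under the uniform choice of $i$ coordinates, summing over $i$ with weights $b_i$, and then standardizing. The paper phrases the coordinate computation as $\mathbb{E}(X'-X\mid X)=(n-2X)\sum_i(i/n)b_i$, which is algebraically identical to your $\mathbb{E}(X'\mid X)=(1-2c)X+nc$; your remark on passing from conditioning on $(X_1,\dots,X_n)$ to conditioning on $W$ is a minor extra care that the paper leaves implicit.
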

\begin{proof}
Let $X=\sum_{i=1}^nX_i$ and $X'=\sum_{i=1}^nX_i'$ as defined above.

\begin{align*}
\mathbb{E}(X'-X|X)& =  \sum_{i=1}^n\mathbb{E}(X_i'-X_i|X)\\
 & = \sum_{i:X_i=1}(-1)\mathbb{P}(X_i'=0|X_i=1)+\sum_{i:X_i=0}\mathbb{P}(X_i'=1|X_i=0) \\
 & = (n-2X)\sum_{i=1}^n\left(\frac{i}{n}\right)b_i.
\end{align*}

Substituting $X=\sqrt{n/4}W+n/2$ and $X'=\sqrt{n/4}W'+n/2$ yields
$\mathbb{E}(W'-W|W)= -2\sum_{i=1}^n\left(\frac{i}{n}\right)b_iW$, which is the lemma. 
\end{proof}

Now Theorem \ref{stn86} can be applied with $a=2\sum_{i=1}^n\left(\frac{i}{n}\right)b_i$.  In order to apply the theorem,
we still need to compute the quantities
$Var(\mathbb{E}[(W'-W)^2|W])$ and $\mathbb{E}(W'-W)^4$.

\begin{lemma}\label{tm1}
\[Var(\mathbb{E}[(W'-W)^2|W]) = 16B^2Var(W^2),\] 
where
$B=\sum_{i=2}^n\frac{i(i-1)}{n(n-1)}b_i.$
\end{lemma}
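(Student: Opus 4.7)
The plan is to compute $\mathbb{E}[(W'-W)^2\mid W]$ explicitly and show it is an affine function of $W^2$ with slope $4B$; the lemma then follows by taking variances. Since $W-W'$ is a scalar multiple of $X-X'$, I will work at the level of $X'-X$ first and convert at the end.

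First I would condition on both $X$ and the step size $i$ of the chain. Given $X=k$ and that the chain flips exactly $i$ of the $n$ coordinates (which happens with probability $b_i$), let $J$ be the number of these $i$ flipped coordinates that were originally $1$; then $J$ is hypergeometric with parameters $(n,k,i)$, and $X'-X=(i-J)-J=i-2J$. Using the standard formulas $\mathbb{E} J=ik/n$ and $\mathrm{Var}(J)=ik(n-k)(n-i)/[n^2(n-1)]$, a direct expansion gives
\begin{equation*}
\mathbb{E}[(X'-X)^2 \mid X=k,\,\text{step}=i]
= i^2\!\left(\tfrac{n-2k}{n}\right)^{\!2} + \frac{4ik(n-k)(n-i)}{n^2(n-1)}.
\end{equation*}

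Next I would translate into $W$. Since $W=(2X-n)/\sqrt{n}$, we have $(n-2k)^2 = nW^2$ and $k(n-k) = (n^2-nW^2)/4$; moreover $(W'-W)^2 = (4/n)(X'-X)^2$. Averaging over $i$ with weights $b_i$ gives, after substituting these identities,
\begin{equation*}
\mathbb{E}[(W'-W)^2\mid W]
= W^2\sum_{i=0}^n b_i\!\left[\frac{4i^2}{n^2} - \frac{4i(n-i)}{n^2(n-1)}\right]
+ \sum_{i=0}^n b_i\,\frac{4i(n-i)}{n(n-1)}.
\end{equation*}
The main (though routine) step is to simplify the bracketed coefficient of $W^2$. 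Pulling out $4i/[n^2(n-1)]$ and combining the numerator $i(n-1)-(n-i)=n(i-1)$ collapses the bracket to $4i(i-1)/[n(n-1)]$, so the coefficient of $W^2$ is exactly $4\sum_{i} b_i\,i(i-1)/[n(n-1)] = 4B$ (the $i=0,1$ terms vanish automatically, matching the $i\ge 2$ in the definition of $B$).

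Thus $\mathbb{E}[(W'-W)^2\mid W] = 4BW^2 + C$ for a deterministic constant $C$ depending only on $n$ and the $b_i$. Taking variance kills $C$ and yields $\mathrm{Var}(\mathbb{E}[(W'-W)^2\mid W]) = 16B^2\,\mathrm{Var}(W^2)$, as claimed. The only real obstacle is the algebraic identity that makes the coefficient of $W^2$ collapse cleanly to $4B$; once that is in hand the rest is bookkeeping, and it is precisely this simplification that makes the constant term independent of $W$.
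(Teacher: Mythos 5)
Your proof is correct, but it takes a genuinely different route from the paper's. The paper expands $(X'-X)^2 = \sum_{i,j}(X_i'-X_i)(X_j'-X_j)$ and partitions the cross terms according to whether $X_i, X_j$ are $0$ or $1$; the expectations are expressed via the flip probabilities $A$ (one fixed coordinate flips) and $B$ (two fixed coordinates flip), and the simplification to $4BW^2 + C$ happens after substituting $X = \sqrt{n/4}\,W + n/2$. You instead condition on the step size $i$ and note that the number $J$ of $1$'s among the $i$ flipped coordinates is hypergeometric, so $X'-X = i-2J$ and the conditional second moment follows from the hypergeometric mean and variance; the identity $i(n-1)-(n-i)=n(i-1)$ then collapses the $W^2$-coefficient directly to $4B$. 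Both approaches are sound. Yours is arguably slicker for this lemma since it avoids the case analysis and produces the coefficient of $W^2$ as a one-line algebraic identity; the paper's coordinate-pair decomposition has the advantage that it extends in a uniform way to the fourth-moment computation of Lemma~\ref{tm2}, where your route would require higher hypergeometric moments and become less transparent.
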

\begin{proof}
\begin{align*}
\mathbb{E}[(X'-X)^2|X] & =  \sum_{i=1}^n\mathbb{E}[(X_i'-X_i)^2|X)  +   
							\mathop{\sum_{i:X_i=1}}_{j \not= i:X_j=0}\mathbb{E}[(X_i'-X_i)(X_j'-X_j)|X)  \\
						& +	\mathop{\sum_{i:X_i=1}}_{j \not= i:X_j=1}\mathbb{E}[(X_i'-X_i)(X_j'-X_j)|X)  \\
						& + \mathop{\sum_{i:X_i=0}}_{j \not= i:X_j=0}\mathbb{E}[(X_i'-X_i)(X_j'-X_j)|X)  \\
						& = nA +(X(X-1)+(n-X)(n-X-1)-2X(n-X))B, \\
\end{align*}
where $A=\sum_{i=1}^n\left(\frac{i}{n}\right)b_i$.  Substituting $X=\sqrt{n/4}W+n/2$ and $X'=\sqrt{n/4}W'+n/2$ into the equation and solving appropriately yields
$\mathbb{E}[(W'-W)^2|W]=4BW^2+C$ where $C$ is some constant.
Taking variances proves the lemma.
\end{proof}

\begin{lemma}\label{tm2}
\[\mathbb{E}(W'-W)^4 =\frac{16}{n}(A+3B(n-1)+(2-3n+n\mathbb{E}(W^4))D,\]
where $A=\sum_{i=1}^n\left(\frac{i}{n}\right)b_i$, $B$ is defined as in Lemma \ref{tm1}, and $D$ is a constant. 
\end{lemma}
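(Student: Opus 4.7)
The plan is to mirror the proof of Lemma \ref{tm1}, expanding $(X'-X)^4 = \left(\sum_{i=1}^n Y_i\right)^4$ with $Y_i := X_i' - X_i \in \{-1, 0, 1\}$, and grouping the resulting monomials by the multiplicity pattern of the index multiset: $(4)$, $(3,1)$, $(2,2)$, $(2,1,1)$, $(1,1,1,1)$. Writing $F_i$ for the indicator that coordinate $i$ is flipped and $c_i := 1 - 2X_i \in \{-1,+1\}$, one has $Y_i = c_i F_i$. Because $c_i^2 = 1$ and $F_i^2 = F_i$, every monomial collapses to a product of distinct $c$'s times a product of distinct $F$'s, and conditional expectations then follow from
\[
\mathbb{E}[F_{j_1} \cdots F_{j_r} \mid X] \;=\; \sum_{j \geq r} b_j\, \binom{n-r}{j-r}\Big/\binom{n}{j},
\]
which reproduces $A$ for $r=1$, $B$ for $r=2$, and introduces auxiliary constants $C$ and $D$ for $r = 3, 4$ respectively.

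Summing with the standard multinomial weights $1, 4, 3, 6, 1$ for the five patterns and letting $T := n - 2X = \sum_i c_i$, I would reduce the constrained symmetric sums with the help of $\sum c_i^2 = n$: the $(3,1)$ pair sum becomes $T^2 - n$, the $(2,1,1)$ triple sum becomes $(n-2)(T^2 - n)$, and the fully-distinct quadruple sum reduces, after subtracting the $(4)$, $(3,1)$, $(2,2)$, $(2,1,1)$ pieces of the trivial expansion of $T^4$, to $T^4 - (6n-8)T^2 + (3n^2 - 6n)$. The result is an expression for $\mathbb{E}[(X'-X)^4 \mid X]$ that is a polynomial in $T^2$ and $T^4$ with coefficients in $A, B, C, D$.

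Finally, take the unconditional expectation using $\mathbb{E}T^2 = n$ and $\mathbb{E}T^4 = n^2 \mathbb{E}[W^4]$ (recall $T = -\sqrt{n}\,W$). Every term carrying a factor of $T^2 - n$ vanishes, which eliminates the $C$-contribution entirely and kills the $(3,1)$-piece of the $B$-contribution, leaving $n\bigl[A + 3(n-1)B + D(2 - 3n + n\mathbb{E}[W^4])\bigr]$. The translation back to $W$ through $W'-W = 2(X'-X)/\sqrt{n}$, giving $(W'-W)^4 = 16(X'-X)^4/n^2$, then produces the claimed expression with its $16/n$ prefactor.

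The main obstacle is the purely combinatorial bookkeeping for the $(2,1,1)$ and $(1,1,1,1)$ patterns: one must correctly track the multinomial weights and reduce each constrained symmetric sum of the $c_i$'s without a sign or counting slip. Nothing deep is happening, but the specific polynomial $2 - 3n + n\mathbb{E}[W^4]$ multiplying $D$ only emerges after the leading and subleading coefficients from the $(1,1,1,1)$ reduction are correctly combined with the vanishing of the $T^2-n$ pieces, so even a small algebraic error would visibly misalign with the statement.
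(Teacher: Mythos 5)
Your proof is correct and follows essentially the same route as the paper's: expand $(X'-X)^4$ by multiplicity pattern with multinomial weights $1,4,3,6,1$, compute conditional expectations in terms of the flip probabilities (yielding $A$, $B$, $C$, $D$ for $r=1,2,3,4$ specified coordinates), and take unconditional expectations using $\mathbb{E}[T^2]=n$ and $\mathbb{E}[T^4]=n^2\mathbb{E}[W^4]$. The factorization $Y_i = c_i F_i$ with $c_i = 1-2X_i$ and the reductions via $T=\sum_i c_i$ are a cleaner rendering of the same bookkeeping the paper summarizes with ``counting the number of each type and computing the expected values in a manner similar to the proof of Lemma \ref{tm1}''; in particular your identity $\sum_{\text{distinct}} c_ic_jc_lc_m = T^4 - (6n-8)T^2 + 3n^2-6n$ is equivalent to the paper's expression $n(n-1)(n-2)(n-3) - 8\bigl((n-X)X(X-1)(X-2) + X(n-X)(n-X-1)(n-X-2)\bigr)$.
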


\begin{proof}
Let $Y_i=X_i'-X_i$.
\begin{align*}
\mathbb{E}\left[(X'-X)^4|X\right] &= \sum_{i=1}^n\mathbb{E}\left[Y_i^4|X\right]+ 
						4\mathop{\sum_{i}}_{j \not= i}\mathbb{E}\left[Y_i^3Y_j|X\right] \\  
			                      &+ 3\mathop{\sum_{i}}_{j \not= i}\mathbb{E}\left[Y_i^2Y_j^2|X\right] 
                        +6\mathop{\mathop{\sum_{i}}_{j \not= i}}_{l\not= i,j}\mathbb{E}\left[Y_i^2Y_jY_l|X\right] \\
		           &+\mathop{\mathop{\mathop{\sum_{i}}_{j \not= i}}_{l \not= i,j}}_{m \not= i,j,l}\mathbb{E}\left[Y_iY_jY_lY_m|X\right].\\
\end{align*}                                    
 
Counting the number of each type and computing the expected values
in a manner similar to the proof of Lemma \ref{tm1} yields

\begin{align*}
\mathbb{E}[(X'-X)^4&|X ]  
            =nA + (4B+6(n-2)C)(n(n-1)-4X(n-X))\\
            &+[n(n-1)(n-2)(n-3)-8((n-X)X(X-1)(X-2) \\
			&+X(n-X)(n-X-1)(n-X-2))]D+3n(n-1)B.
\end{align*}
Here $C$ and $D$ are constants that vanish in the final expression, but $C$ is the probability of any fixed three
coordinates changing, and $D$ is the same probability but with four coordinates.
Substituting in $W$ and $W'$ as in the previous lemmas and taking the expected value of
$\mathbb{E}\left[(W'-W)^4|W\right]$ implies the lemma.
\end{proof}

\begin{lemma} \label{mom}
$\mathbb{E}[W^4]=3-\frac{2}{n}.$
\end{lemma}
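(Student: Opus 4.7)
The approach is a direct moment computation exploiting the special structure of $p = 1/2$. Recalling that $W = (X - n/2)/\sqrt{n/4}$ with $X = \sum_{i=1}^n X_i$ and $X_i$ i.i.d.\ Bernoulli$(1/2)$, the plan is to symmetrize by introducing $Y_i := 2X_i - 1$, so that each $Y_i$ takes values $\pm 1$ with probability $1/2$, the $Y_i$ are independent, and
\[
W \;=\; \frac{1}{\sqrt{n}} \sum_{i=1}^n Y_i.
\]
Because $Y_i^2 \equiv 1$, we have $\mathbb{E}[Y_i^k] = 1$ when $k$ is even and $0$ when $k$ is odd.

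The second step is to expand $W^4 = n^{-2}\bigl(\sum_i Y_i\bigr)^4$ and separate the multinomial expansion into the four index patterns $iiii$, $iiij$, $iijj$, $iijk$, $ijkl$ (with all listed indices distinct). By independence and the vanishing of odd moments of $Y_i$, every term with an odd exponent on any coordinate contributes zero, so only the patterns $iiii$ and $iijj$ survive. Counting these gives
\[
\mathbb{E}\!\left[\Bigl(\sum_i Y_i\Bigr)^{\!4}\right]
\;=\; \sum_i \mathbb{E}[Y_i^4] \;+\; 3\!\!\sum_{i \neq j}\!\! \mathbb{E}[Y_i^2 Y_j^2]
\;=\; n + 3n(n-1),
\]
where the factor $3$ accounts for the $\binom{4}{2}$ placements of the two pairs.

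The third and final step is to divide by $n^2$:
\[
\mathbb{E}[W^4] \;=\; \frac{n + 3n(n-1)}{n^2} \;=\; \frac{3n - 2}{n} \;=\; 3 - \frac{2}{n},
\]
which is the claimed identity. There is essentially no obstacle here: the only subtlety is remembering that $p = 1/2$ forces the odd moments of $Y_i$ to vanish, which collapses the multinomial expansion to just two terms. For general $p$ the corresponding identity would be messier, but the symmetric case is clean.
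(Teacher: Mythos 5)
Your computation is correct, but it takes a different route from the paper's. The paper's proof is a one-line analytic argument: it writes down the moment generating function $\phi(t)=\bigl(\tfrac{1}{2}e^{t/\sqrt{n}}+\tfrac{1}{2}e^{-t/\sqrt{n}}\bigr)^n$ of the symmetric standardized binomial and reads off $\mathbb{E}[W^4]=\phi^{(4)}(0)$. You instead symmetrize to $Y_i=2X_i-1\in\{\pm1\}$, expand $\bigl(\sum_i Y_i\bigr)^4$ multinomially, and use independence plus the vanishing of odd moments of $Y_i$ to kill all but the $Y_i^4$ and $Y_i^2Y_j^2$ patterns, landing on $n+3n(n-1)$ before dividing by $n^2$. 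Both are elementary and correct; the MGF route is more compact for this single moment, while your combinatorial expansion is in the same spirit as the counting arguments the paper itself uses in Lemmas \ref{tm1} and \ref{tm2}, so it fits the surrounding material well and would generalize more transparently to the mixed moments appearing there. One small remark on your counting: the factor $3$ in $3\sum_{i\neq j}\mathbb{E}[Y_i^2Y_j^2]$ arises because $\binom{4}{2}=6$ placements of the pair positions get divided by $2$ when the sum ranges over ordered pairs $(i,j)$; you have the right answer, but it is worth being explicit about whether the sum is over ordered or unordered pairs so the bookkeeping is unambiguous.
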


\begin{proof}

The moment generating function for $W$ is

\vspace{2mm}

$\phi(t)=\left(\frac{1}{2}\left[exp  \left(\left(\frac{1}{n}\right)^{1/2}t\right)\right]+
\frac{1}{2}\left[exp  \left(-\left(\frac{1}{n}\right)^{1/2}t\right)\right]\right)^n,$

\vspace{2mm}

and $\mathbb{E}[W^4]=\phi^{(4)}(0)$.

\end{proof}

Now that we have all the formulas needed to apply Theorem \ref{stn86}, we can prove the main result of this section.

\begin{theorem}\label{binmthm}
Using the family of reversible Markov chains described previously, the error term given
by Theorem \ref{stn86} is minimum for $b_0+b_1=1$ (and $b_0 \not= 1)$.  In this case the bound is $\left[\frac{8}{\pi n}\right]^{1/4}$.
\end{theorem}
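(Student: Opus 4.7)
The plan is to substitute the outputs of Lemmas \ref{bil}, \ref{tm1}, \ref{tm2}, and \ref{mom} into the bound (\ref{s862}) and then minimize the resulting expression over the probability simplex $\{(b_0,\ldots,b_n) : b_i \geq 0,\ \sum b_i = 1\}$. Writing $A = \sum_{i=1}^n (i/n) b_i$ and $B = \sum_{i=2}^n \frac{i(i-1)}{n(n-1)} b_i$ as in the lemmas, we have $a = 2A$, so the error bound becomes
\[
\frac{4B\sqrt{\mathrm{Var}(W^2)}}{2A} + \left[\frac{\mathbb{E}(W'-W)^4}{2\pi A}\right]^{1/4}.
\]

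The key observation — and the only real computational step — is that the value $\mathbb{E}(W^4) = 3 - 2/n$ from Lemma \ref{mom} makes the factor $(2 - 3n + n\mathbb{E}(W^4))$ in Lemma \ref{tm2} vanish identically. Hence the unpleasant constant $D$ drops out, and
\[
\mathbb{E}(W'-W)^4 = \frac{16}{n}\bigl(A + 3B(n-1)\bigr).
\]
Likewise $\mathrm{Var}(W^2) = (3-2/n)-1 = 2(n-1)/n$. Plugging in and simplifying, the bound takes the clean form
\[
\frac{2B}{A}\sqrt{\tfrac{2(n-1)}{n}} \;+\; \left[\tfrac{8}{\pi n}\right]^{1/4} \left[1 + \tfrac{3B(n-1)}{A}\right]^{1/4}.
\]

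At this point the minimization is transparent: both summands are strictly increasing functions of the ratio $B/A \geq 0$. Since $B$ is a nonnegative linear combination of $b_2,\ldots,b_n$, we have $B = 0$ precisely when $b_i = 0$ for all $i \geq 2$, that is, when $b_0 + b_1 = 1$. The side condition $b_0 \neq 1$ guarantees $b_1 > 0$ and hence $A = b_1/n > 0$, so the expression is well-defined. In this optimal case the first summand vanishes and the second summand equals $[8/(\pi n)]^{1/4}$, yielding the stated bound.

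The only nontrivial step is verifying the cancellation $2 - 3n + n\mathbb{E}(W^4) = 0$; after that the claim is forced by the monotonicity of both terms in $B/A$, so I do not anticipate any genuine obstacle beyond careful bookkeeping of the substitutions.
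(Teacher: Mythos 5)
Your proof is correct and follows essentially the same route as the paper's: both substitute Lemmas \ref{bil}, \ref{tm1}, \ref{tm2}, \ref{mom} into bound (\ref{s862}), observe that the bound reduces to an increasing function of the ratio $B/A$, and minimize by taking $b_0+b_1=1$ (the paper implicitly uses the cancellation $2-3n+n\mathbb{E}(W^4)=0$ you call out explicitly). The only cosmetic difference is that the paper phrases the monotonicity step as minimizing $\mathrm{Var}(\mathbb{E}[(W'-W)^2\mid W])/a^2$ and $\mathbb{E}(W'-W)^4/a$ separately before taking roots, whereas you write out the full bound directly; these are the same argument.
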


\begin{proof}

Because a is positive, it is sufficient to verify that
$\frac{Var(\mathbb{E}[(W'-W)^2|W])}{a^2}$ and
$\frac{\mathbb{E}(W'-W)^4}{a}$ are minimum for $b_0+b_1=1$.
First we examine $\frac{Var(\mathbb{E}[(W'-W)^2|W])}{a^2}$. By Lemmas \ref{bil}, \ref{tm1}, and \ref{mom} and the
fact that $a=2A$, this term is equal to
\[8\left(\frac{n-1}{n}\right)\left(\frac{B^2}{A^2}\right),\]
which is non-negative and equal to zero when $b_0+b_1=1$.

For the second term we use the second formulation in Theorem \ref{stn86} and find the minimum of
\begin{align}
\frac{\mathbb{E}(W'-W)^4}{a}. \label{bi4}
\end{align}
By Lemmas \ref{bil}, \ref{tm2}, and \ref{mom}, (\ref{bi4}) is equal to
\[\frac{8}{n}\left(1+3(n-1)\frac{B}{A}\right),\]
which is also minimized when $b_0+b_1=1$, and is equal to $8/n$ in this case.
\end{proof}

\begin{remarks}
\mbox{}
\begin{enumerate}
\item It is important to notice that the error term from Theorem \ref{stn86} depends on the Markov chain only through the term $B/A$, where
$B=\sum_{i=1}^n\left(\frac{i(i-1)}{n(n-1)}\right)b_i$ and $A=\sum_{i=1}^n\left(\frac{i}{n}\right)b_i$. We will be using this fact later
in Section \ref{lamsec}.

\item The case $b_0+b_1=1$ corresponds to the Markov chain that holds with probability $b_0$ and changes one coordinate chosen uniformly at
random with probability $b_1$. Therefore, this chain has the smallest maximum step size over all the chains in the family under study. Also,
as mentioned in the previous section, a quantitative measure of step size is the associated value of $a$ from Theorem \ref{stn86}, which is
equal to $2b_1/n$ in this optimal case.  Restricting to the case where $b_0=0$, the chain yielding the best bound ($b_1=1$) has the smallest
value of $a$.  This restriction is not artificial; in lieu of the previous remark,
the chain generated with $\mathbb{P}(T=t)=b_t'$, where $b_0'=0$, and $b_i'=b_i/(1-b_0)$ for $i\not=0$ has the same
error term from Theorem \ref{stn86} as the chain with $\mathbb{P}(T=t)=b_t$.  In other words, manipulating the holding probability
changes the parameter $a$, but can not yield bounds better than those obtained by choosing $b_0=0$. 
\end{enumerate}
\end{remarks}


\section{Plancherel Distribution of the Hamming Scheme (Or the Binomial Distribution in General)} \label{krawsec}

In this section we examine the uniform distribution on the eigenvalues of the adjacency matrix
for the Hamming graph. The Hamming graph $H(n,q)$ has vertex set $X$ equal to n-tuples of $\{1,2, \ldots ,q\}$
(thus $|X|=q^n$) with an edge between two vertices if they differ in exactly one coordinate.  

The following information about the adjacency matrix of the hamming scheme can be found in \cite{bait84} in the more generalized setting
of association schemes.
Let $v_i=(q-1)^i \binom{n}{i}$, the number of vertices that differ from a fixed vertex by
$i$ coordinates.
The eigenvalues of $H(n,q)$ are $K_1(i)=n(q-1)-qi$ with multiplicity $v_i$
for $i=0,1,2, \ldots ,n$.   Choose i with probability $\frac{v_i}{|X|}$ and designate this the Plancherel distribution of the Hamming Scheme.
Let $W(i)=\frac{K_1(i)}{\sqrt{v_1}}$ a random variable with unit variance.  In order to define the
family of Markov chains that induce the exchangeable pairs, we must define the q-Krawtchouk polynomials:
\[K_j(i)=\sum_{l=0}^j (-1)^l(q-1)^{j-l}\binom{i}{l}\binom{n-i}{j-l}.\]
Here and in what follows we freely use the convention $\binom{m}{r}=0$ for $r>m$ or $r<0$.
Following \cite{ful05},
define
\begin{align}
L_T(i,j)=\frac{v_j}{|X|}\sum_{r=0}^n \frac{K_r(i)K_r(T)K_r(j)}{v_r^2}. \label{LTdef}
\end{align}
For a given $T$ in $\{1, \ldots,n\}$, define a Markov chain on $\{0,\ldots,n\}$ by the transition probability of moving from $i$ to $j$ as $L_T(i,j)$. Then $L_T(i,j)$ is 
a Markov chain on $\{0,\ldots,n\}$ reversible with respect to the Plancherel distribution above $\left(\mathbb{P}(i)=\frac{v_i}{|X|}\right)$.
Following the usual setup, choose i from the Plancherel distribution and then j with probability $L_T(i,j)$ and set
the exchangeable pair $(W,W')=(W(i),W(j))$. In \cite{ful05}, the author uses Theorem \ref{stn86} with the exchangeable
pair induced by the chain $L_1$ (defined by (\ref{LTdef}) with $T=1$) to obtain a bound on the difference between the normal distribution and
the Plancherel measure. We will show that over a large family of Markov chains, $L_1$ is the most local chain and 
the error term obtained in Theorem \ref{stn86} using $L_1$ (as was done in \cite{ful05}) is optimal over this family.

Another (somewhat more motivating) way of viewing the Hamming scheme is given in the following easily verified proposition. 
\begin{proposition}\cite{ful05}\label{bico}
For $W$ defined as above, $W$ is equal in distribution to a binomial distribution with parameters $n$ and $p=\frac{1}{q}$
normalized to have mean 0 and variance 1.
\end{proposition}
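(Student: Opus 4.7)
The plan is to verify the proposition by a direct computation, reading off the distribution of $i$ from its definition and then matching the random variable $W(i)$ to the standardization of a binomial.

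First I would rewrite the Plancherel weights
$$\mathbb{P}(i=k) = \frac{v_k}{|X|} = \frac{(q-1)^k \binom{n}{k}}{q^n} = \binom{n}{k}\left(\frac{q-1}{q}\right)^k\left(\frac{1}{q}\right)^{n-k},$$
which is exactly the probability mass function of a binomial random variable with parameters $n$ and success probability $(q-1)/q$. Equivalently, if we set $Y = n - i$, then $Y$ is binomial with parameters $n$ and $p = 1/q$.

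Next I would compute the mean and variance of $i$ from this identification: $\mathbb{E}[i] = n(q-1)/q$ and $\mathrm{Var}(i) = n(q-1)/q^2$. This quickly gives $\mathbb{E}[W] = (n(q-1) - q \cdot n(q-1)/q)/\sqrt{n(q-1)} = 0$ and, using $v_1 = n(q-1)$, $\mathrm{Var}(W) = q^2 \mathrm{Var}(i)/v_1 = 1$, which (as a sanity check) is consistent with the normalization claimed in the proposition.

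Finally, the matching is algebraic: the standardization of $Y \sim \mathrm{Bin}(n, 1/q)$ is
$$Z = \frac{Y - n/q}{\sqrt{n(q-1)/q^2}} = \frac{qY - n}{\sqrt{n(q-1)}},$$
and substituting $Y = n - i$ gives $qY - n = nq - qi - n = n(q-1) - qi = K_1(i)$. Hence $Z = K_1(i)/\sqrt{v_1} = W(i)$, proving the equality in distribution. There is no real obstacle here; the only subtlety is keeping track of which binomial parameter appears (since $i$ is $\mathrm{Bin}(n,(q-1)/q)$ while $n-i$ is $\mathrm{Bin}(n,1/q)$), and this is resolved by the substitution in the last step.
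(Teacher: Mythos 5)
Your proof is correct and is exactly the direct verification the paper has in mind when it calls the proposition "easily verified": you read off $\mathbb{P}(i=k) = v_k/|X| = \binom{n}{k}\bigl(\tfrac{q-1}{q}\bigr)^k\bigl(\tfrac{1}{q}\bigr)^{n-k}$, pass to $Y = n-i \sim \mathrm{Bin}(n,1/q)$, and check that standardizing $Y$ reproduces $W(i) = K_1(i)/\sqrt{v_1}$. The paper does not print a proof (it cites Fulman and states the proposition is easily verified), but your computation is the natural one and is complete.
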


Thus the Plancherel distribution of the Hamming scheme can be defined as a binomial distribution.
We will redefine the Markov chain $L_T(i,j)$ in terms
of this characterization, but first we list some well known properties of
Krawtchouk polynomials found in \cite{masl77}.

\begin{lemma}\label{kraw01}
For $j,l \in \{0,\ldots,n\}$, 
\[\sum_{i=0}^n\frac{K_i(j)K_i(l)}{v_i}=\frac{|X|}{v_j}\delta_{j,l}.\]
\end{lemma}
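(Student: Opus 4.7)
The plan is to prove the orthogonality via the generating function for the $q$-Krawtchouk polynomials followed by a one-line matrix-inversion argument. The starting point is the identity
\[F_i(z) \;:=\; \sum_{j=0}^n K_j(i)\, z^j \;=\; (1-z)^i\bigl(1+(q-1)z\bigr)^{n-i},\]
which follows immediately from the stated definition of $K_j(i)$ by expanding each factor on the right via the binomial theorem and collecting the coefficient of $z^j$.

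I would then establish the ``primary'' orthogonality
\[\sum_{i=0}^n v_i\, K_j(i)\, K_l(i) \;=\; |X|\, v_j\, \delta_{j,l}\]
by evaluating the bivariate sum $\sum_{i=0}^n v_i F_i(z) F_i(w)$ in two ways. Using $v_i = \binom{n}{i}(q-1)^i$ and the binomial theorem, the sum collapses to
\[\bigl[(q-1)(1-z)(1-w) + (1+(q-1)z)(1+(q-1)w)\bigr]^n,\]
and a routine simplification shows the bracket equals $q + q(q-1)zw$. The sum is therefore $|X|\bigl[1+(q-1)zw\bigr]^n = |X|\sum_{k} v_k (zw)^k$, and comparing coefficients of $z^j w^l$ on the two sides yields the primary orthogonality.

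The lemma itself now follows from the primary orthogonality by matrix algebra. Let $M$ denote the $(n+1)\times(n+1)$ matrix with entries $M_{ij}=K_j(i)$, and let $D=\mathrm{diag}(v_0,\ldots,v_n)$. The primary orthogonality reads $M^T D M = |X|\, D$, which forces $M$ to be invertible with $M^{-1} = |X|^{-1} D^{-1} M^T D$. Substituting this expression into $M M^{-1} = I$ and rearranging produces $M D^{-1} M^T = |X|\, D^{-1}$, whose $(j,l)$ entry is precisely
\[\sum_{i=0}^n \frac{K_i(j)\, K_i(l)}{v_i} \;=\; \frac{|X|}{v_j}\,\delta_{j,l}.\]

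The only step requiring actual computation is the binomial-theorem collapse in the second paragraph; everything else is bookkeeping. I do not foresee any substantive obstacle, as the identity is a classical Krawtchouk orthogonality relation recorded in \cite{masl77}.
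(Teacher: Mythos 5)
Your proof is correct. The paper itself does not prove Lemma~\ref{kraw01}; it merely lists it (along with Lemmas~\ref{kraw2}--\ref{krawswap}) as a known property of Krawtchouk polynomials and cites \cite{masl77}, so there is no internal argument to compare against.

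Two small observations on your route. First, the generating-function computation is sound: the bracket $(q-1)(1-z)(1-w)+(1+(q-1)z)(1+(q-1)w)$ indeed collapses to $q\bigl(1+(q-1)zw\bigr)$, which gives Lemma~\ref{kraw2} directly by comparing coefficients of $z^jw^l$. Second, once Lemma~\ref{kraw2} is in hand, your matrix-inversion argument is a clean way to pass to Lemma~\ref{kraw01}, but a slightly more economical path (and one that stays closer to the toolkit the paper already assembles) is to invoke the self-duality relation of Lemma~\ref{krawswap}, which in the form $v_iK_j(i)=v_jK_i(j)$ lets you rewrite
\begin{align*}
\sum_{i=0}^n\frac{K_i(j)K_i(l)}{v_i}
&=\frac{1}{v_jv_l}\sum_{i=0}^n v_i\,K_j(i)K_l(i)
=\frac{|X|\,v_j\,\delta_{j,l}}{v_jv_l}
=\frac{|X|}{v_j}\,\delta_{j,l}.
\end{align*}
Either way the substance is the same; your matrix identity $M^TDM=|X|D\Rightarrow MD^{-1}M^T=|X|D^{-1}$ is just an abstract repackaging of that duality. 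Both approaches buy the same thing: a self-contained proof of a classical relation that the paper elects to cite rather than derive.
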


\begin{lemma}\label{kraw2}
For $j,l \in \{0,\ldots,n\}$, 
\[\sum_{i=0}^nv_iK_j(i)K_l(i)=|X|v_j\delta_{j,l}.\]
\end{lemma}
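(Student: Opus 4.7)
The plan is to derive Lemma \ref{kraw2} from Lemma \ref{kraw01} via the standard symmetry
\[v_i K_j(i) = v_j K_i(j), \qquad i,j \in \{0,1,\ldots,n\},\]
recorded in \cite{masl77}. Once this is in hand, substituting $K_j(i) = v_j K_i(j)/v_i$ and $K_l(i) = v_l K_i(l)/v_i$ into the sum of interest and cancelling the factors of $v_i$ produces
\[\sum_{i=0}^n v_i K_j(i) K_l(i) \;=\; v_j v_l \sum_{i=0}^n \frac{K_i(j)\,K_i(l)}{v_i}.\]
Applying Lemma \ref{kraw01} to the right-hand side gives $v_j v_l \cdot |X|\delta_{j,l}/v_j = |X|\, v_l\, \delta_{j,l}$, and the Kronecker delta forces $v_l = v_j$ whenever the expression is nonzero, so the sum equals $|X|\, v_j\, \delta_{j,l}$ as claimed.

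To justify the symmetry from scratch, expand $v_i K_j(i)$ and $v_j K_i(j)$ using the definition of $K_j(i)$; the identity reduces term-by-term to
\[\binom{n}{i}\binom{i}{l}\binom{n-i}{j-l} \;=\; \binom{n}{j}\binom{j}{l}\binom{n-j}{i-l},\]
both sides of which simplify to $\frac{n!}{l!\,(i-l)!\,(j-l)!\,(n-i-j+l)!}$. Alternatively, one can bypass the symmetry entirely by using the generating function $\sum_{j} K_j(i)\,z^j = (1-z)^i(1+(q-1)z)^{n-i}$: weighting the product of the $z$- and $w$-generating functions by $v_i$ and summing over $i$ with the binomial theorem collapses the $i$-sum to $q^n(1+(q-1)zw)^n = |X|\sum_k v_k (zw)^k$, from which extracting the coefficient of $z^j w^l$ reads off the lemma directly.

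The main obstacle, such as it is, is really just cosmetic: Lemma \ref{kraw01} and Lemma \ref{kraw2} are dual orthogonality relations (row versus column orthogonality of the Krawtchouk eigenmatrix), and on a finite-dimensional space either formally implies the other, since a left inverse of a square matrix is automatically a right inverse. Thus the content of Lemma \ref{kraw2} is essentially free once Lemma \ref{kraw01} is available; the only real work is identifying the correct weight to produce the clean form stated.
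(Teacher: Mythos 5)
The paper does not prove Lemma \ref{kraw2}; it lists it (together with Lemmas \ref{kraw01}, \ref{krawrec}, \ref{krawswap}) as a well-known property and simply cites MacWilliams--Sloane \cite{masl77}. Your derivation is correct and, as a bonus, is internal to the paper's own toolkit: the identity $v_i K_j(i)=v_j K_i(j)$ is exactly Lemma \ref{krawswap} after unwinding $v_i=(q-1)^i\binom{n}{i}$, and substituting it converts the weighted sum into $v_jv_l\sum_i K_i(j)K_i(l)/v_i$, which Lemma \ref{kraw01} evaluates as $|X|v_l\delta_{j,l}=|X|v_j\delta_{j,l}$. Your binomial check of the symmetry is also right, both sides collapsing to the quadrinomial $n!/\bigl(l!\,(i-l)!\,(j-l)!\,(n-i-j+l)!\bigr)$, and the generating-function alternative goes through cleanly since
\[(q-1)(1-z)(1-w)+(1+(q-1)z)(1+(q-1)w)=q\bigl(1+(q-1)zw\bigr),\]
so the weighted double generating function equals $q^n(1+(q-1)zw)^n=|X|\sum_k v_k (zw)^k$. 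The closing remark that the two lemmas are the dual (row vs.\ column) orthogonality relations of the square Krawtchouk eigenmatrix, hence formally equivalent, is also accurate; the only caution is that deriving one from the other via \ref{krawswap} and \ref{kraw01} is not circular only because \ref{kraw01} itself is taken as given from \cite{masl77}, which the paper does.
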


\begin{lemma}\label{krawrec}
For $j \in \{0,\ldots,n\}$,
\[(j+1)K_{j+1}(i)=((n-j)(q-1)+j-qi)K_j(i)-(q-1)(n-j+1)K_{j-1}(i).\]
\end{lemma}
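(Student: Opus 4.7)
The plan is to prove the three-term recurrence via the generating function identity
\[
F(z) := \sum_{j \geq 0} K_j(i) z^j = (1+(q-1)z)^{n-i}(1-z)^i,
\]
which follows immediately from the binomial theorem applied to each factor and the Vandermonde-style convolution collecting the coefficient of $z^j$, reproducing the defining formula for $K_j(i)$ given just before Lemma \ref{kraw01}.

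Granting this identity, I would differentiate $F$ logarithmically to obtain
\[
F'(z) = \left[\frac{(n-i)(q-1)}{1+(q-1)z} - \frac{i}{1-z}\right] F(z),
\]
and then clear denominators by multiplying through by $(1+(q-1)z)(1-z) = 1 + (q-2)z - (q-1)z^2$. This yields the polynomial identity
\[
\bigl(1 + (q-2)z - (q-1)z^2\bigr) F'(z) = \bigl[(n-i)(q-1) - i - n(q-1)z\bigr] F(z),
\]
after expanding $(n-i)(q-1)(1-z) - i(1+(q-1)z)$. The recurrence then pops out by reading off the coefficient of $z^j$ on each side, using $[z^j]F'(z) = (j+1)K_{j+1}(i)$, $[z^j]\,zF'(z) = jK_j(i)$, and $[z^j]\,z^2 F'(z) = (j-1)K_{j-1}(i)$, and regrouping terms. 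I would verify that the resulting coefficient $(n-i)(q-1) - i - (q-2)j$ of $K_j(i)$ simplifies to $(n-j)(q-1) + j - qi$ and that the coefficient of $K_{j-1}(i)$ simplifies to $-(q-1)(n-j+1)$, matching the statement.

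The only real obstacle is bookkeeping: making sure the sign conventions in the generating function match the $(-1)^l$ in the definition of $K_j(i)$, and correctly tracking the shifted coefficients $(j+1), j, (j-1)$ when extracting $z^j$ from $F'$, $zF'$, and $z^2 F'$. As an alternative, one could give a purely combinatorial proof by substituting the explicit sum for $K_{j+1}(i)$, $K_j(i)$, and $K_{j-1}(i)$ on both sides and verifying that the coefficients of $\binom{i}{l}\binom{n-i}{m}$ agree for every pair $(l,m)$; this avoids generating functions but is notationally heavier. I would favor the generating function route since it reduces the entire argument to one line of algebra in $z$, with the combinatorial derivation available as an independent check.
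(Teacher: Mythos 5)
The paper does not prove Lemma~\ref{krawrec}; it simply lists it (together with Lemmas~\ref{kraw01}, \ref{kraw2}, and \ref{krawswap}) as a well-known property of Krawtchouk polynomials cited from \cite{masl77}, so there is no in-paper proof to compare against. Your generating-function argument is correct and is indeed the standard derivation. The identity $F(z)=\sum_j K_j(i)z^j=(1+(q-1)z)^{n-i}(1-z)^i$ follows at once by multiplying the two binomial expansions and reading off the coefficient of $z^j$, which reproduces the defining sum for $K_j(i)$. The logarithmic derivative, clearing of denominators, and coefficient extraction all check out: the coefficient of $K_j(i)$ you get, $(n-i)(q-1)-i-(q-2)j$, does simplify to $(n-j)(q-1)+j-qi$, and $-n(q-1)+(q-1)(j-1)=-(q-1)(n-j+1)$ as required. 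The boundary conventions ($K_{-1}\equiv 0$ from the empty sum, and the $j$ and $(j-1)$ prefactors vanishing at $j=0$ and $j=1$) take care of the low-index cases, so the extraction of $[z^j]$ from $F'$, $zF'$, and $z^2F'$ is legitimate for all $j\geq 0$. This is a clean, self-contained proof of a fact the paper takes on faith.
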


\begin{lemma}\label{krawswap}
For $i,j \in \{0,\ldots,n\},$
\[K_j(i)=(q-1)^{j-i}\frac{\binom{n}{j}}{\binom{n}{i}}K_i(j).\]
\end{lemma}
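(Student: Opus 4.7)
The plan is to work directly from the explicit formula
\[K_j(i)=\sum_{l=0}^j (-1)^l(q-1)^{j-l}\binom{i}{l}\binom{n-i}{j-l}\]
given in the excerpt, and establish the equivalent cross-multiplied form
\[\binom{n}{i}K_j(i)=(q-1)^{j-i}\binom{n}{j}K_i(j).\]

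The first step is to multiply the defining sum by $\binom{n}{i}$ and apply the trinomial revision identity
\[\binom{n}{i}\binom{i}{l}\binom{n-i}{j-l}=\binom{n}{j}\binom{j}{l}\binom{n-j}{i-l},\]
which is straightforward to verify either by writing both sides in factorials or by the standard multinomial interpretation (choosing an $l$-subset, a $(j-l)$-subset, and an $(n-j)$-complement from $[n]$). This substitution produces
\[\binom{n}{i}K_j(i)=\binom{n}{j}\sum_{l=0}^{j}(-1)^l(q-1)^{j-l}\binom{j}{l}\binom{n-j}{i-l}.\]

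The second step is to handle the summation range. Since $\binom{n-j}{i-l}$ vanishes whenever $l>i$ (as $i-l<0$), the upper limit $j$ in the sum may be replaced by $\min(i,j)$, and in particular by $i$ when reinterpreting the sum as a Krawtchouk polynomial in the opposite variables. Factoring out $(q-1)^{j-i}$ and relabeling leaves
\[\binom{n}{i}K_j(i)=(q-1)^{j-i}\binom{n}{j}\sum_{l=0}^{i}(-1)^l(q-1)^{i-l}\binom{j}{l}\binom{n-j}{i-l},\]
and the inner sum is exactly $K_i(j)$ by the defining formula applied with the roles of $i$ and $j$ swapped. Dividing by $\binom{n}{i}$ yields the claim.

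There is no real obstacle: the identity is a classical symmetry of Krawtchouk polynomials, and the only slightly delicate point is the bookkeeping for the range of summation, which is resolved by noting that the offending terms vanish automatically under the convention $\binom{m}{r}=0$ for $r<0$ already adopted in the excerpt.
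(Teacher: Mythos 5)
Your proof is correct. The paper does not actually prove Lemma \ref{krawswap}: it is stated as one of several well-known facts about Krawtchouk polynomials and simply cited to MacWilliams and Sloane \cite{masl77}. Your argument supplies an elementary self-contained verification, and the two key steps (the trinomial revision identity $\binom{n}{i}\binom{i}{l}\binom{n-i}{j-l}=\binom{n}{j}\binom{j}{l}\binom{n-j}{i-l}$, followed by adjusting the summation range using the convention $\binom{m}{r}=0$ for $r<0$ or $r>m$) are both sound. One small expository wrinkle: you say the upper limit ``may be replaced by $\min(i,j)$, and in particular by $i$,'' which reads oddly when $i>j$. The cleaner statement is that terms with $l>i$ vanish because $\binom{n-j}{i-l}=0$, and terms with $l>j$ vanish because $\binom{j}{l}=0$, so the upper limit can freely be taken to be either $i$ or $j$; with upper limit $i$, the sum is literally the defining formula for $K_i(j)$. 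This is a phrasing issue only, not a gap in the argument.
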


Finally, we need one more tool that equates the product of two Krawtchouk polynomials with a linear
combination of Krawtchouk polynomials.  
\begin{lemma}\label{krawbase}
For $i, j, r  \in \{0,\ldots,n\},$
\[K_i(r)K_j(r)=    \sum_{l=-j}^jA_{j,i+l}(i)K_{i+l}(r),\]
where \[A_{j,i+l}(i)=\frac{\binom{n}{i}}{\binom{n}{i+l}}\sum_{k=0}^j\binom{j-k}{k-l}\binom{n-i}{k}\binom{i}{j-k}\frac{(q-2)^{j-2k+l}}{(q-1)^{l-k}}.\]

\end{lemma}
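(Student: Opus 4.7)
The plan is to prove the expansion via a product formula for the generating function of the Krawtchouk polynomials. From the definition one immediately obtains
\[F_r(z):=\sum_{m=0}^n K_m(r)\,z^m \;=\;(1-z)^r\bigl(1+(q-1)z\bigr)^{n-r},\]
so $K_i(r)K_j(r)=[z^iw^j]F_r(z)F_r(w)$, and the lemma reduces to an identity about coefficients of a product of two copies of this generating function.

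The key step is to establish the product formula
\[F_r(z)F_r(w)\;=\;\bigl(1+(q-1)zw\bigr)^n\,F_r(u),\qquad u:=\frac{z+w+(q-2)zw}{1+(q-1)zw}.\]
Raising to the appropriate powers and multiplying, this reduces to the two scalar identities $(1-u)\bigl(1+(q-1)zw\bigr)=(1-z)(1-w)$ and $\bigl(1+(q-1)u\bigr)\bigl(1+(q-1)zw\bigr)=\bigl(1+(q-1)z\bigr)\bigl(1+(q-1)w\bigr)$, both checked by direct substitution of the expression for $u$. Finding this explicit $u$ (and the correct prefactor $\bigl(1+(q-1)zw\bigr)^n$) is what I expect to be the main obstacle; once guessed, its verification is routine algebra.

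Once the product formula is in hand, expand $F_r(u)=\sum_m K_m(r)u^m$ and clear the denominator $1+(q-1)zw$ to obtain
\[F_r(z)F_r(w)\;=\;\sum_{m=0}^n K_m(r)\bigl(z+w+(q-2)zw\bigr)^m\bigl(1+(q-1)zw\bigr)^{n-m}.\]
Comparing the coefficient of $z^iw^j$ on both sides shows that the coefficient of $K_{i+l}(r)$ in the expansion of $K_i(r)K_j(r)$ equals
\[[z^iw^j]\bigl(z+w+(q-2)zw\bigr)^{i+l}\bigl(1+(q-1)zw\bigr)^{n-i-l}.\]

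Finally, I would expand the trinomial $\bigl(z+w+(q-2)zw\bigr)^{i+l}$ by the multinomial theorem and $\bigl(1+(q-1)zw\bigr)^{n-i-l}$ by the binomial theorem; extracting the coefficient of $z^iw^j$ collapses the resulting quadruple sum to a single sum over one index. A shift of this summation index by $l$, combined with the identity $\binom{n}{i}/\binom{n}{i+l}=(i+l)!(n-i-l)!/(i!(n-i)!)$ used to convert the binomial $\binom{n-i-l}{\cdot}$ into a product involving $\binom{n-i}{\cdot}$ and $\binom{i}{\cdot}$, reproduces the stated three-binomial expression for $A_{j,i+l}(i)$. This final matching of sums is routine bookkeeping, and the essential content of the lemma lies in the product formula of the second paragraph.
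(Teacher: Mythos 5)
Your approach is correct and genuinely different from the paper's. The paper proves Lemma~\ref{krawbase} by strong induction on $j$: using the three-term recurrence (Lemma~\ref{krawrec}) it reduces the product $K_{j+1}(r)K_i(r)$ to terms already known by induction, and then \emph{verifies} that the stated closed form for $A_{j,i+l}(i)$ satisfies the resulting coefficient recurrence. You instead \emph{derive} the coefficients in one pass from the generating function $F_r(z)=(1-z)^r(1+(q-1)z)^{n-r}$ and the product identity $F_r(z)F_r(w)=(1+(q-1)zw)^nF_r(u)$ with $u=\frac{z+w+(q-2)zw}{1+(q-1)zw}$, whose verification you correctly reduce to the two factorizations $(1-u)(1+(q-1)zw)=(1-z)(1-w)$ and $(1+(q-1)u)(1+(q-1)zw)=(1+(q-1)z)(1+(q-1)w)$. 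I checked the coefficient extraction: expanding the trinomial and binomial factors and imposing the exponent constraints leaves exactly one free index; with the substitution $k=d+l$ and the prefactor identity $\binom{n}{i}/\binom{n}{i+l}=(i+l)!(n-i-l)!/(i!(n-i)!)$, the single sum matches the stated three-binomial form for $A_{j,i+l}(i)$ exactly, and the range $|l|\le j$ falls out automatically from the non-negativity of the multinomial indices. The trade-off is that your route requires discovering the product formula (as you note), but in return it produces the coefficients rather than merely confirming a formula guessed in advance, and it makes the vanishing of coefficients outside $|l|\le j$ transparent rather than leaving it implicit in the inductive bookkeeping.
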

\begin{proof}
The first thing to note is that the Krawtchouk polynomials $K_i(r)$ for $i$ non-negative integers form a basis for all polynomials in $r$, so that
such a decomposition exists. Now, fix $i$ and write $A_{j,i+l}(i)=A_{j,i+l}$.
For $j=0$, $A_{0,i}=1$ and $A_{0,i+l}=0$ for $l \not = 0$, which agrees with the lemma.

Also, since $K_1(r)=(q-1)(n-r)-r=(q-1)n-qr$, Lemma \ref{krawrec} implies
\begin{align}
K_i(r)K_1(r)=i(q-2)K_i(r)+(i+1)K_{i+1}(r)+(q-1)(n-i+1)K_{i-1}(r), \label{krpr1}
\end{align}
which is also consistent with the lemma (the equality (\ref{krpr1}) was shown in \cite{ful05}).

For $j \geq 1$, we use Lemma \ref{krawrec} and strong induction to obtain
\begin{align}
(j+1)K_{j+1}&(r)K_i(r)=\notag\\ 
&((n-j)(q-1)+j-qr)K_j(r)K_i(r) \notag \\
&-(q-1)(n-j+1)K_{j-1}(r)K_i(r) \notag  \\
                     &= \left((n-j)(q-1)+j-qr\right)\sum_{l=-j}^jA_{j,i+l}K_{i+l}(r) \notag \\
  &-(q-1)(n-j+1)\sum_{l=-(j-1)}^{j-1}A_{j-1,i+l}K_{i+l}(r)  \notag \\
  &=\left((n-j)(q-1)+j\right)\sum_{l=-j}^jA_{j,i+l}K_{i+l}(r) \notag \\
  &-(q-1)(n-j+1)\sum_{l=-(j-1)}^{j-1}A_{j-1,i+l}K_{i+l}(r) \label{kp3}\\
  &+\sum_{l=-j}^j[(l+i+1)K_{l+i+1}(r)+(q-1)(n-i-l+1)K_{i+l-1}(r) \notag \\
 &-((n-i-l)(q-1)+l+i)K_{i+l}(r)]A_{j,i+l}, \notag 
\end{align}
where the final equality is by Lemma \ref{krawrec}. For each $l$, the coefficient
of $K_{i+l}(r)$ in (\ref{kp3}) is
\begin{align}
&(i+l-j)(q-2)A_{j,i+l}+(q-1)(n-i-l)A_{j,i+(l+1)} \label{kp1}\\
&+(i+l)A_{j,i+(l-1)}-(q-1)(n-j+1)A_{j-1,i+l}\label{kp2}.
\end{align}
The lemma will follow if the expression above is equal to $(j+1)A_{j+1,i+l}$.
To see this fact, re-index the sums in the definition of
$A_{j,i}$ in (\ref{kp2}) to begin at one, and equate summands.
\end{proof}

The fact from the previous lemma that the coefficients in the linear expansion
are positive for $q\geq 2$ has been shown without explicit computation in \cite{eag69} and restated in the monograph \cite{ask75}. 
We use that the coefficients are positive in the next theorem which shows they can be used to define a probability distribution.

\begin{theorem} \label{LT}
For $q\geq2$, the Markov chain $L_T(i,j)$ defined on $\{0,\ldots,n\}$ has the same
transition probabilities as the following chain:  Given a $0-1$ n-tuple
with $i$ ones, choose $T$ coordinates at random.  Replace every zero coordinate chosen to a one and for each one coordinate chosen, replace it with a 
zero with probability $\frac{1}{q-1}$ and let the coordinate remain as a one with probability $\frac{q-2}{q-1}$.
The probability of going from $i$ ones to $j$
ones is $L_T(i,j)$. 
\end{theorem}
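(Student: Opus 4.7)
The plan is to compute both the transition probability of the combinatorial chain and $L_T(i,j)$ as explicit single sums, and verify they agree via a re-indexing of the summation variable.

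First, I would compute the transition probability $P(i,j)$ directly from the combinatorial description. Let $a$ denote the number of ones among the $T$ chosen coordinates; conditionally, $a$ is hypergeometric with mass $\binom{i}{a}\binom{n-i}{T-a}/\binom{n}{T}$. The $T-a$ chosen zeros all become ones, and of the $a$ chosen ones the number $m$ that flip to zero is $\mathrm{Binomial}(a, 1/(q-1))$. Setting the new count of ones equal to $j$ forces $m = T-a-(j-i)$, so
\[
P(i,j) = \frac{1}{\binom{n}{T}} \sum_{a} \binom{i}{a}\binom{n-i}{T-a}\binom{a}{T-a-(j-i)} \frac{(q-2)^{2a-T+(j-i)}}{(q-1)^a}.
\]

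Next, I would simplify the spectral expression (\ref{LTdef}) for $L_T(i,j)$. Using Lemma \ref{krawswap} on each of $K_r(i)$, $K_r(T)$, and $K_r(j)$, together with $v_r = (q-1)^r\binom{n}{r}$, the defining sum collapses to a constant multiple of $\sum_r v_r K_i(r) K_T(r) K_j(r)$. Then I would apply Lemma \ref{krawbase} to expand $K_i(r) K_T(r) = \sum_l A_{T,i+l}(i)\, K_{i+l}(r)$, swap the order of summation, and collapse the inner sum via the orthogonality $\sum_r v_r K_{i+l}(r) K_j(r) = v_j|X|\delta_{i+l,j}$, which follows from Lemma \ref{kraw01} by applying Lemma \ref{krawswap} to both factors. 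Only the term $l = j-i$ survives, yielding a closed form for $L_T(i,j)$ in terms of $A_{T,j}(i)$.

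Finally, substituting the explicit formula for $A_{T,j}(i)$ from Lemma \ref{krawbase} and re-indexing via $a = T-k$, each summand aligns exactly with the corresponding summand in the expression for $P(i,j)$, concluding the proof. The main obstacle is the bookkeeping: several factors of $(q-1)$, $(q-2)$, and ratios of binomial coefficients must be tracked, and the cancellations become visible only after the re-indexing. The positivity of the coefficients $A_{T,j}(i)$, invoked in the paper from \cite{eag69,ask75}, guarantees that the resulting $L_T(i,j)$ are valid transition probabilities.
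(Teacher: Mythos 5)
Your proposal is correct and follows essentially the same path as the paper's proof: directly compute the combinatorial transition probability (you sum over the number $a$ of ones chosen, the paper over the number $k = T-a$ of zeros chosen, a trivial re-indexing), transform $L_T(i,j)$ via Lemma \ref{krawswap} into a constant times $\sum_r v_r K_i(r)K_T(r)K_j(r)$, expand $K_i(r)K_T(r)$ via Lemma \ref{krawbase}, swap sums, and collapse by orthogonality to isolate $A_{T,j}(i)$. The only cosmetic difference is that you re-derive the orthogonality $\sum_r v_r K_{i+l}(r)K_j(r) = |X|v_j\delta_{i+l,j}$ from Lemma \ref{kraw01} plus Lemma \ref{krawswap}, whereas this is exactly Lemma \ref{kraw2}, which the paper cites directly.
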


\begin{proof}
By summing over $k$ equal to the number of zeros in the $T$ coordinates chosen, 
the probability of going from $i$ to $i+l$ in the chain described is
\[\sum_{k=0}^T\binom{T-k}{k-l}\left(\frac{(q-2)^{T-2k+l}}{(q-1)^{T-k}}\right)\left(\frac{\binom{n-i}{k}\binom{i}{T-k}}{\binom{n}{T}}\right).\]

Also,
\begin{align*}
L_T(i,j)&=\frac{v_j}{|X|}\sum_{r=0}^n \frac{K_r(i)K_r(T)K_r(j)}{v_r^2}\\
		&=\frac{v_j}{|X|\binom{n}{i}\binom{n}{j}\binom{n}{T}(q-1)^{i+j+T}}\sum_{r=0}^n(q-1)^r\binom{n}{r}K_i(r)K_T(r)K_{j}(r)\\
		&=\frac{v_j}{|X|\binom{n}{i}\binom{n}{j}\binom{n}{T}(q-1)^{i+j+T}}\sum_{r=0}^nv_rK_j(r)\sum_{l=-T}^TA_{T,i+l}K_{i+l}(r)\\
		&=\frac{v_j}{|X|\binom{n}{i}\binom{n}{j}\binom{n}{T}(q-1)^{i+j+T}}\sum_{l=-T}^TA_{T,i+l}\sum_{r=0}^nv_rK_{i+l}(r)K_{j}(r).\\
\end{align*}
The first equality is by Lemma \ref{krawswap} and the second is by Lemma \ref{krawbase}.  

Applying Lemma \ref{kraw2} implies \[L_T(i,i+l)=\frac{v_{i+l}}{\binom{n}{i}\binom{n}{T}(q-1)^{i+T}}A_{T,i+l}.\]
By Lemma \ref{krawbase}, this equals the desired quantity.
\end{proof}

According to Proposition \ref{bico}, the restriction $q\geq2$ corresponds to a binomial distribution with parameters $n$ and $p$ with $1/2\leq p <1$.
However, after normalizing to have mean equal to zero and unit variance, a binomial random variable with parameters $n$ and $(1-p)$ is the negative
of a binomial random variable with parameters $n$ and $p$.  Therefore, because the normal distribution is symmetric about zero, the following analysis 
can be applied to any binomial random variable.

The chains defined by ($\ref{LTdef}$) have now been fully described in terms of the binomial distribution.
Now we will start to examine the error term from Theorem \ref{stn86}. The next lemma shows the quantity $a$ from Theorem \ref{stn86} is 
equal to $\frac{qT}{n(q-1)}$
so that $a$ is increasing as a function of $T$.  Also, Theorem \ref{LT} implies that the maximum step size of $L_T$ is $T$; smaller
values of $T$ make $W$ and $W'$ ``closer" in both senses described in Section \ref{NA}. 

\begin{lemma}\label{krawlam}\cite{ful05}
$\mathbb{E}[W'|W]=\left(\frac{K_1(T)}{v_1} \right)W.$
\end{lemma}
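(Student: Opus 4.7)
The plan is to unfold the definition of $\mathbb{E}[W'\mid W]$ through the transition probabilities $L_T(i,j)$ and then collapse the resulting triple sum using the orthogonality of Krawtchouk polynomials. Recall that $W(i) = K_1(i)/\sqrt{v_1}$ and, conditional on $W = W(i)$, the random variable $W'$ equals $W(j) = K_1(j)/\sqrt{v_1}$ with probability $L_T(i,j)$.

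First I would write
\begin{align*}
\mathbb{E}[W' \mid W = W(i)] &= \sum_{j=0}^{n} L_T(i,j)\, \frac{K_1(j)}{\sqrt{v_1}} \\
&= \frac{1}{|X|\sqrt{v_1}} \sum_{j=0}^{n} v_j K_1(j) \sum_{r=0}^{n} \frac{K_r(i) K_r(T) K_r(j)}{v_r^2},
\end{align*}
using the definition (\ref{LTdef}). Swapping the order of summation isolates the $j$-sum as $\sum_{j=0}^{n} v_j K_r(j) K_1(j)$, and here Lemma \ref{kraw2} applies directly: this sum equals $|X| v_r \delta_{r,1}$. Only the $r=1$ term in the outer sum survives, yielding
\begin{align*}
\mathbb{E}[W' \mid W = W(i)] &= \frac{1}{|X|\sqrt{v_1}} \cdot \frac{K_1(i) K_1(T)}{v_1^2} \cdot |X|\, v_1 \\
&= \frac{K_1(T)}{v_1} \cdot \frac{K_1(i)}{\sqrt{v_1}} = \frac{K_1(T)}{v_1}\, W(i),
\end{align*}
which is the claim.

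There is no real obstacle here; the content of the lemma is precisely that $\mathbf{W} = (K_1(i)/\sqrt{v_1})_{i}$ is an eigenvector of the transition matrix $L_T$ with eigenvalue $K_1(T)/v_1$, and this is built into the spectral-type definition (\ref{LTdef}). The only bookkeeping step is recognizing that Lemma \ref{kraw2} is the orthogonality relation needed to kill all but one term in the spectral expansion, a manifestation of the general discussion at the end of Section \ref{NA} about engineering chains so that a prescribed orthogonal family forms its eigenvectors.
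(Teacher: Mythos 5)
Your proof is correct and is essentially the same as the paper's: expand $\mathbb{E}[W'\mid i]$ via $L_T(i,j)$, swap the order of summation, and apply the orthogonality relation of Lemma \ref{kraw2} to kill every term but $r=1$. The only cosmetic difference is that the paper conditions on $i$ and then observes the answer depends on $i$ only through $W$, whereas you condition directly on $W=W(i)$, which is equivalent here since $W$ is injective in $i$.
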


\begin{proof}
\begin{align*}
\mathbb{E}[W'|i] &=\frac{1}{\sqrt{v_1}}\sum_{j=0}^nL_T(i,j)K_1(j) \\
				&=\frac{1}{\sqrt{v_1} \, |X|}\sum_{r=0}^n\frac{K_r(T)K_r(i)}{v_r^2}\sum_{j=0}^nv_jK_r(j)K_1(j) \\
				&=\left(\frac{K_1(T)}{v_1} \right)W(i).
\end{align*}
The first equality is by definition and the second uses Lemma \ref{kraw2} directly.  Since conditioning on $i$
only depends on $i$ through $W$, the lemma is proven.
\end{proof}

For this example, the error term (in the more general setting of association schemes) has already been computed using Theorem \ref{stn86}
in \cite{ful05}, so we only state the result we need.  First we must define a function $p_{ \, 2}$ on $\{0,\ldots,n\}$ using the following description.
Start from a fixed $X_0$ in $X$ and choose a coordinate uniformly at random.
Replace the chosen coordinate by one of the remaining $q-1$ options different from the original value uniformly at random to obtain $X_1$.  Perform
the same operation on $X_1$ to obtain $X_2$ and  
define $p_2(j)$ to be the probability that $X_2$ has $j$ coordinates
different then $X_0$.  Thus $p_2(0)=\frac{1}{n(q-1)}$, $p_2(1)=\frac{(q-2)}{n(q-1)}$, $p_2(2)=\frac{n-1}{n}$, and $p_2(j)=0$ for $j \geq 3$.

\begin{theorem}\cite{ful05}\label{krawful05}
Let $W$ and $a$ defined as above and let $T$ be fixed in $\{1,\ldots,n\}$. Then for all real $x_0$,
\begin{align*}
\bigg| \mathbb{P}(W<x_0)&-\frac{1}{\sqrt{2\pi}}\int_{-\infty}^{x_0}e^{\frac{-x^2}{2}}dx \bigg| \\
&\leq 
\frac{v_1}{a}\sqrt{\sum_{j=1}^n \frac{p_2(j)^2}{v_j}\left(\frac{K_j(T)}{v_j}+1-\frac{2K_1(T)}{v_1}\right)^2} \\
&+\frac{\sqrt{v_1}}{\pi^{1/4}}\left[\sum_{j=0}^n\left(8-\frac{6}{a}\left(1-\frac{K_j(T)}{v_j}\right)\right)\frac{p_2(j)^2}{v_j}\right]^{1/4}.
\end{align*}
\end{theorem}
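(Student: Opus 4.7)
The plan is to apply Theorem \ref{stn86} in the form (\ref{s862}) to the pair $(W,W') = (W(i), W(j))$, where $(i,j)$ is drawn from $(v_i/|X|)L_T(i,j)$. By Lemma \ref{krawlam} we already have $\mathbb{E}[W'|W] = (1-a)W$ with $a = 1 - K_1(T)/v_1 = qT/(n(q-1))$, so what remains is to bound $Var(\mathbb{E}[(W'-W)^2|W])$ and $\mathbb{E}(W'-W)^4$. The engine of the whole computation is the spectral identity
\begin{equation}\label{eigid}
\mathbb{E}[K_r(j)\mid i] = \frac{K_r(T)}{v_r}\,K_r(i)
\end{equation}
for every $r \in \{0,\ldots,n\}$, obtained exactly as in the proof of Lemma \ref{krawlam} upon replacing $K_1$ by $K_r$ and invoking Lemma \ref{kraw2}: the Krawtchouk polynomials simultaneously diagonalize every $L_T$.

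For the conditional second moment I would write $\mathbb{E}[(W'-W)^2\mid i] = v_1^{-1}\mathbb{E}[(K_1(j)-K_1(i))^2\mid i]$. The linear cross term is handled by Lemma \ref{krawlam}; the term $\mathbb{E}[K_1(j)^2\mid i]$ is computed by first decomposing $K_1(j)^2$ into $K_0(j), K_1(j), K_2(j)$ via the base case (\ref{krpr1}) of Lemma \ref{krawbase}, and then applying (\ref{eigid}) to each piece. The outcome is an affine combination of $K_1(i)$ and $K_2(i)$. Taking variance against the Plancherel weighting $v_i/|X|$ and using the orthogonality Lemma \ref{kraw2} kills cross terms and leaves a weighted sum with $1/v_j$ factors. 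The distribution $p_2$ surfaces naturally here because, as one checks directly from its definition, $p_2(j)$ is the two-step transition probability of the simple nearest-neighbor chain $L_1$; repackaging the coefficients in terms of $p_2(j)$ together with the spectral shifts $1 - K_j(T)/v_j$ yields the first summand of the bound.

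For $\mathbb{E}(W'-W)^4$ the same strategy is carried out on $(K_1(j)-K_1(i))^4$: expand, apply Lemma \ref{krawbase} iteratively to write each power $K_1(j)^k$ for $k \le 4$ as a linear combination of higher Krawtchouk polynomials of $j$, and reduce every resulting conditional expectation by (\ref{eigid}). After taking the unconditional expectation under $v_i/|X|$, Lemma \ref{kraw2} forces only the $K_0$ component of each product to survive. The bracketed factor $8 - (6/a)(1 - K_j(T)/v_j)$ is then produced by using the elementary exchangeable-pair identity $\mathbb{E}(W'-W)^2 = 2a$ to absorb one power of $a$ from the denominator when grouping the final expression.

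The main obstacle is the bookkeeping in the fourth-moment expansion; iterating Lemma \ref{krawbase} produces a number of terms that grows rapidly, and one must spot the algebraic cancellations that collapse everything into $p_2(j)^2/v_j$. A natural shortcut I would try first is to treat the $T=1$ case separately, where $p_2$ appears transparently as a true two-step law of $L_1$, and then invoke (\ref{eigid}) to transfer the result to general $T$ by tracking how each spectral mode contributes the factor $1 - K_j(T)/v_j$. This transfer step is what I expect to require the most care, but it also explains conceptually why $p_2$, an object with no a priori dependence on $T$, ends up appearing uniformly in the final bound.
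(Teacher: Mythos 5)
Your outline is sound and follows essentially the approach that the reference \cite{ful05} uses (this theorem is quoted, not proven, in the present paper --- the paper's only ``proof'' is to plug the quantities of Lemma \ref{fks} into inequality (\ref{s862}) of Theorem \ref{stn86}). The spectral identity $\mathbb{E}[K_r(j)\mid i]=K_r(T)K_r(i)/v_r$ is indeed the engine, and your identification of $p_2(j)$ with $L_1^2(0,j)$ is correct. The real reason $p_2$ appears, though, is purely algebraic: the linearization $K_1(i)^2 = \sum_{r=0}^{2} A_r K_r(i)$ from (\ref{krpr1}) has coefficients $A_r = v_1^2 p_2(r)/v_r$, and that one identity, used twice (once for $K_1(j)^2$ and once for $K_1(i)^2$), is all you need.

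Where your plan overshoots is the fourth moment. You propose iterating Lemma \ref{krawbase} to expand $K_1(j)^k$ for $k\le 4$, which is where you worry about ``bookkeeping that grows rapidly.'' This can be avoided. Use exchangeability of $(i,j)$ to write
\begin{align*}
\mathbb{E}(K_1(j)-K_1(i))^4 = 2\,\mathbb{E} K_1(j)^4 - 8\,\mathbb{E} K_1(j)^3K_1(i) + 6\,\mathbb{E} K_1(j)^2K_1(i)^2,
\end{align*}
and compute each term using \emph{only} the linearization of $K_1^2$, the spectral identity, and the orthogonality relation $\mathbb{E}[K_r(i)K_s(i)]=v_r\delta_{rs}$: one finds $\mathbb{E} K_1(j)^4 = v_1^4\sum_r p_2(r)^2/v_r$, $\mathbb{E} K_1(j)^3 K_1(i) = (K_1(T)/v_1)\,v_1^4\sum_r p_2(r)^2/v_r$ (condition on $j$ and apply the spectral identity to $K_1(i)$), and $\mathbb{E} K_1(j)^2K_1(i)^2 = v_1^4\sum_r (p_2(r)^2/v_r)(K_r(T)/v_r)$. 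Recombining gives exactly Fulman's expression $v_1^2\sum_r (p_2(r)^2/v_r)\bigl(8(1-K_1(T)/v_1)-6(1-K_r(T)/v_r)\bigr)$, with no need to linearize $K_1^3$ or $K_1^4$ and hence none of the cancellations you were bracing for. A small correction: the factor $8-(6/a)(1-K_j(T)/v_j)$ in the final bound comes simply from dividing $\mathbb{E}(W'-W)^4$ by $a$, as dictated by (\ref{s862}); the identity $\mathbb{E}(W'-W)^2=2a$ plays no role here. Your proposed $T=1$-then-transfer shortcut would also work, but it is less transparent than noticing that the coefficient $p_2$ has nothing to do with two-step walks per se --- it is the set of structure constants of $K_1^2$ in the Krawtchouk basis.
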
  
Armed with this theorem, we can examine how varying the value of $T$ affects the error term.  However, rather than examining the error for
a fixed value of $T$, we define $T$ to be a random variable on $\{0,\ldots,n\}$ with $\mathbb{P} \, (T=t)=b_t$ (and $b_0 \not= 1$).  Note that
this modification does not affect the stationary distribution of the chain. 

There are two reasons for using a random variable instead of a fixed value of $T$.  The first reason is that using a random variable yields
a larger family of Markov chains.  The second reason is that when $q=2$, the Plancherel distribution is
a binomial distribution with parameters
$n$ and $p=1/2$.  In this case, Theorem \ref{LT}
implies the chain $L_t(i,j)$ chooses $t$ coordinates with probability $b_t$ and changes them all. That is,
$L_t(i,j)$
follows the rule of moving a (Hamming) distance $t$ away with probability $b_t$ and we recover the chain from Section \ref{binsec}.

We will show that the
random variable $T$ that minimizes the error term from Theorem \ref{stn86} is induced by the Markov chain used in \cite{ful05} which has $b_1=1$ (or
alternatively $b_0+b_1=1$ and $b_0 \not= 1$) and
all other $b_t=0$.
The computation of the error term with $T$ as a random variable closely follows the proof of Theorem
\ref{krawful05} from \cite{ful05}.   
To apply Theorem \ref{stn86}, we need to compute the terms in the bound on the error.  It will be helpful to define $(W_t,W_t')$
to be the exchangeabe pair defined in the usual way from the Markov chain $L_t(i,j)$.  Note that $W_t=W$ since using a different Markov chain
in the family does not
alter the stationary distribution.




The next two lemmas will generate all the terms needed to apply Theorem \ref{stn86}.

\begin{lemma}\cite{ful05} \label{fks}

\begin{align*}
\mathbb{E}[(W_t'-W_t)^2|i]& =v_1\sum_{r=0}^n\frac{K_r(i)K_r(t)}{v_r^2}p_2(r)+\left(1-\frac{2K_1(t)}{v_1}\right)W_t^2. \\
Var(\mathbb{E}[(W_t'-W_t)^2|i])&=v_1^2\sum_{r=1}^n\frac{p_2(r)^2}{v_r} \left( 1+\left( \frac{K_r(t)}{v_r}-\frac{2K_1(t)}{v_1} \right) \right)^2. \\
\mathbb{E}[(W_t'-W_t)^4]&=v_1^2\left[\sum_{r=0}^n\left(8\left(1-\frac{K_1(t)}{v_1}\right)-6\left(1-\frac{K_r(t)}{v_r}\right)\right)\frac{p_2(r)^2}{v_r}\right].
\end{align*}

\end{lemma}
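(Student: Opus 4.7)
The plan is to derive all three formulas from the spectral structure of the chain $L_t$. The foundational observation, obtained by applying the orthogonality relation of Lemma \ref{kraw2} to the definition (\ref{LTdef}), is that the Krawtchouk polynomials diagonalize $L_t$:
\[\sum_j L_t(i,j) K_s(j) = \frac{K_s(t)}{v_s} K_s(i)\]
for every $s$. This is precisely what already produced Lemma \ref{krawlam}.

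For the first formula I would expand $(W_t' - W_t)^2 = (K_1(j) - K_1(i))^2/v_1$ and condition on $i$. The cross term yields $-2 W_t^2 K_1(t)/v_1$ directly from Lemma \ref{krawlam}. For the $K_1(j)^2$ term, I would use the product identity $K_1^2 = v_1 K_0 + (q-2) K_1 + 2 K_2$ (equation (\ref{krpr1}) in the proof of Lemma \ref{krawbase}) to rewrite it as a Krawtchouk expansion and then apply the eigenvalue relation termwise. The final step is to recognize the coefficients in the form $v_1 K_r(t) p_2(r)/v_r^2$ by substituting the known values $p_2(0) = 1/v_1$, $p_2(1) = (q-2)/v_1$, $p_2(2) = (n-1)/n$.

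The second formula is essentially a corollary of the first. The previous step produces an expansion $\mathbb{E}[(W_t'-W_t)^2 | i] = \sum_r \alpha_r K_r(i)$ in the Krawtchouk basis, so by the orthogonality of the $K_r$ with respect to the stationary Plancherel measure (Lemma \ref{kraw2}) the variance equals $\sum_{r \geq 1} \alpha_r^2 v_r$. After merging the contribution from the spectral sum with the small contribution from the $W_t^2$ term (which only hits $r \in \{0,1,2\}$), the coefficients unify as $\alpha_r = (v_1 p_2(r)/v_r)(1 + K_r(t)/v_r - 2K_1(t)/v_1)$ for $r \geq 1$, and the stated formula follows.

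For the fourth-moment identity I would use exchangeability to write
\[\mathbb{E}[(W_t'-W_t)^4] = 2 \mathbb{E}[W_t^4] - 8 \mathbb{E}[W_t^3 W_t'] + 6 \mathbb{E}[W_t^2 (W_t')^2].\]
Lemma \ref{krawlam} reduces $\mathbb{E}[W_t^3 W_t']$ to $(K_1(t)/v_1)\mathbb{E}[W_t^4]$, and the clean byproduct $\mathbb{E}[(W_t')^2 | i] = v_1 \sum_r K_r(i) K_r(t) p_2(r)/v_r^2$, which emerges from the first formula once the $W_t^2$ terms cancel, lets me evaluate the remaining mixed moment via Lemma \ref{kraw2}. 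The scalar $\mathbb{E}[W_t^4]$ comes from the Krawtchouk expansion of $K_1^2$ combined again with Lemma \ref{kraw2}. The main obstacle is the bookkeeping: the pieces are individually simple, but producing the advertised combination $8(1-K_1(t)/v_1) - 6(1-K_r(t)/v_r)$ inside the sum requires careful cancellation, with the key consolidating identity being that the Krawtchouk expansion coefficients of $K_1^2/v_1$ equal $v_1 p_2(r)/v_r$.
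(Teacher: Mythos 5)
The paper itself does not prove Lemma \ref{fks}; it is cited verbatim from \cite{ful05}, so there is no internal proof to compare against. Your blind reconstruction is nevertheless correct, and it is in the same spirit as the paper's treatment of the sibling Lemma \ref{lamham} (spectral decomposition via the eigenvalue relation $\sum_j L_t(i,j)K_s(j)=\tfrac{K_s(t)}{v_s}K_s(i)$). All the key identities you invoke check out: the cross and $W_t^2$ terms contribute $\bigl(1-\tfrac{2K_1(t)}{v_1}\bigr)W_t^2$ by Lemma \ref{krawlam}; the expansion $K_1^2 = v_1 K_0 + (q-2)K_1 + 2K_2$ from (\ref{krpr1}) combined with the eigenvalue relation gives $\tfrac{1}{v_1}\mathbb{E}[K_1(j)^2\mid i]=v_1\sum_r \tfrac{K_r(i)K_r(t)}{v_r^2}p_2(r)$ once one verifies the arithmetic identities $v_1 p_2(0)/v_0=1$, $v_1 p_2(1)/v_1=(q-2)/v_1$, and $v_1 p_2(2)/v_2 = 2/v_1$; and the observation that $W_t^2 = v_1\sum_r \tfrac{p_2(r)}{v_r}K_r(i)$ (your ``key consolidating identity'') lets you merge everything into a single Krawtchouk expansion with coefficients $\alpha_r = v_1\tfrac{p_2(r)}{v_r}\bigl(1+\tfrac{K_r(t)}{v_r}-\tfrac{2K_1(t)}{v_1}\bigr)$, from which the variance formula follows by orthogonality of $\{K_r\}$ under the Plancherel measure (Lemma \ref{kraw2}). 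For the fourth moment, the exchangeable decomposition $\mathbb{E}[(W_t'-W_t)^4]=2\mathbb{E}[W_t^4]-8\mathbb{E}[W_t^3W_t']+6\mathbb{E}[W_t^2W_t'^2]$, together with $\mathbb{E}[W_t^3W_t']=\tfrac{K_1(t)}{v_1}\mathbb{E}[W_t^4]$, $\mathbb{E}[W_t^4]=v_1^2\sum_r \tfrac{p_2(r)^2}{v_r}$, and $\mathbb{E}[W_t^2W_t'^2]=v_1^2\sum_r\tfrac{p_2(r)^2 K_r(t)}{v_r^2}$, produces exactly $v_1^2\sum_r\tfrac{p_2(r)^2}{v_r}\bigl(2-8\tfrac{K_1(t)}{v_1}+6\tfrac{K_r(t)}{v_r}\bigr)$, matching the stated combination $8(1-\tfrac{K_1(t)}{v_1})-6(1-\tfrac{K_r(t)}{v_r})$. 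The argument is complete and sound.
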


\begin{lemma} \label{lamham}
\begin{align*}
\mathbb{E}[W'|W]&=\left(\sum_{t=0}^nb_t\frac{K_1(t)}{v_1} \right)W. \\
\mathbb{E}[(W'-W)^2|i] &=v_1\sum_{r=0}^n\frac{K_r(i)\left(\sum_{t=0}^nb_tK_r(t)\right)}{v_r^2}p_2(r) 
+\left(1-\frac{2\left(\sum_{t=0}^nb_tK_1(t)\right)}{v_1}\right)W^2. \\
Var(\mathbb{E}[(W'-W)^2|i])&=v_1^2\sum_{r=1}^n \frac{p_2(r)^2}{v_r} \left( 1+\sum_{t=0}^nb_t\left( \frac{K_r(t)}{v_r}-\frac{2K_1(t)}{v_1} \right) \right)^2. \\
\mathbb{E}[(W'-W)^4] & =v_1^2\bigg[\sum_{r=0}^n\bigg(8\bigg(1-\frac{\sum_{t=0}^nb_tK_1(t)}{v_1}\bigg) 
-6\bigg(1-\frac{\sum_{t=0}^nb_tK_r(t)}{v_r}\bigg)\bigg)\frac{p_2(r)^2}{v_r}\bigg]. \\
\end{align*}
\end{lemma}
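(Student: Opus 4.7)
The plan is to reduce all four identities to Lemma \ref{fks} and Lemma \ref{krawlam} via a single linearity argument. Since $T$ is drawn independently of the starting state $i$ with $\mathbb{P}(T=t)=b_t$, the transition kernel of the mixed chain is the convex combination $L_T(i,j)=\sum_{t=0}^n b_t L_t(i,j)$. Consequently, for any function $g$ of $W'$ we have
\[
\mathbb{E}[g(W')\mid i] \;=\; \sum_{t=0}^n b_t\, \mathbb{E}[g(W_t')\mid i],
\]
and the same identity holds after multiplying $g$ by any fixed function of $i$ (equivalently, of $W$).

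I would apply this identity with $g(W')=W'$ to obtain the first formula from Lemma \ref{krawlam}, and with $g$ equal to $(W'-W)^2$ or $(W'-W)^4$ (viewed as functions of $W'$ for each fixed $i$) to obtain the second and fourth formulas from the corresponding lines in Lemma \ref{fks}. The fourth identity also requires an additional outer expectation over $i$, but this commutes with the sum over $t$. Because each relevant formula in Lemma \ref{fks} is affine in the quantities $K_r(t)/v_r$ and $K_1(t)/v_1$, the $b_t$-averaging simply replaces each $K_r(t)$ by $\sum_{t} b_t K_r(t)$ and each $K_1(t)$ by $\sum_{t} b_t K_1(t)$; the constants that appear (coming from the $r=0$ terms or from the $W^2$ coefficient) pass through unchanged because $\sum_{t} b_t=1$.

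For the variance formula, I would first use the second identity of the lemma to write $\mathbb{E}[(W'-W)^2\mid i]$ in the Krawtchouk basis. Lemma \ref{kraw2} tells us that $\{K_r(\cdot)/\sqrt{v_r}\}_{r=0}^n$ is orthonormal with respect to the stationary measure $\pi(i)=v_i/|X|$, so if one expands
\[
\mathbb{E}[(W'-W)^2\mid i]\;=\;\sum_{r=0}^n C_r(\mathbf b)\,K_r(i),
\]
then $\mathrm{Var}(\mathbb{E}[(W'-W)^2\mid i])=\sum_{r=1}^n C_r(\mathbf b)^2\, v_r$. Linearity gives $C_r(\mathbf b)=\sum_t b_t C_r(t)$, where the $C_r(t)$ are the coefficients in the corresponding expansion for fixed $t$ in the proof of Lemma \ref{fks}. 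Substituting these coefficients yields the claimed formula.

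The only mildly delicate step is the bookkeeping at $r=1,2$: the $(1-2K_1(t)/v_1)W^2$ summand is not written in the Krawtchouk basis as stated, so before applying orthogonality one must expand $W^2=K_1(i)^2/v_1$ using Lemma \ref{krawbase} with $i=j=1$ (which gives a combination of $K_0,K_1,K_2$) and then combine with the explicit $v_1 \sum_r K_r(i) K_r(t)p_2(r)/v_r^2$ term. This is exactly the computation already performed in \cite{ful05} for fixed $t$, and since it is linear in $t$-dependent quantities, averaging against $b_t$ requires no new work beyond rereading that argument with $K_r(t)$ replaced by $\sum_t b_t K_r(t)$ throughout.
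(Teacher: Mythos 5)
Your proposal is correct and follows essentially the same route as the paper: the paper also conditions on the value of $T$, observes that the conditional expectations are linear combinations $\sum_t b_t(\cdot)$ of the fixed-$t$ quantities from Lemma \ref{fks}, and notes that the variance line then follows by computing the variance of the $b_t$-averaged conditional expectation rather than by averaging variances. The paper simply records the first equality and says ``the proofs of the remaining are similar,'' whereas you have spelled out the linearity bookkeeping (including the $\sum_t b_t = 1$ normalization and the Krawtchouk-basis expansion of $W^2$) explicitly.
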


\begin{proof}
We prove only the first equality; the proofs of the remaining are similar. 
By summing over $t$ equal to the value of $T$ chosen, we have

\[\mathbb{E}[W'|i]=\frac{1}{\sqrt{v_1}}\sum_{t=0}^nb_t\sum_{j=0}^nL_t(i,j)K_1(j).\]
The equality now follows from the proof of Lemma \ref{krawlam}.

\end{proof}

Applying Lemma \ref{lamham} to Theorem \ref{stn86} proves the following theorem.

\begin{theorem} \label{krawerr}
If $W$ and $\{b_t\}_{t=0}^n$ are defined as above, $a=1-\left(\sum_{t=0}^nb_t\frac{K_1(t)}{v_1}\right)$, and all other variables are as in Theorem \ref{krawful05}, then for all real $x_0$,
\begin{align*}
&\left| \mathbb{P}(W<x_0)-\frac{1}{\sqrt{2\pi}}\int_{-\infty}^{x_0}e^{\frac{-x^2}{2}}dx \right| \\
&\leq 
\frac{v_1}{a}\sqrt{\sum_{j=1}^n \frac{p_2(j)^2}{v_j}\left(\frac{\sum_{t=0}^nb_tK_j(t)}{v_j}+1-\frac{2\left(\sum_{t=0}^nb_tK_1(t)\right)}{v_1}\right)^2} \\
&+\frac{\sqrt{v_1}}{\pi^{1/4}}\left[\sum_{j=0}^n\left(8-\frac{6}{a}\left(1-\frac{\sum_{t=0}^nb_tK_j(t)}{v_j}\right)\right)\frac{p_2(j)^2}{v_j}\right]^{1/4}.
\end{align*}
\end{theorem}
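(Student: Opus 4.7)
The proof is essentially a direct substitution of the moment computations from Lemma \ref{lamham} into the Stein bound of Theorem \ref{stn86}, using formulation (\ref{s862}). The plan breaks into three short steps.

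First, I would verify that the hypotheses of Theorem \ref{stn86} are met. Since $W$ is centered and normalized (the Plancherel distribution has mean $0$ and $W$ was defined with unit variance), and since the first formula in Lemma \ref{lamham} gives
\[
\mathbb{E}[W'\mid W] = \left(\sum_{t=0}^n b_t \frac{K_1(t)}{v_1}\right) W,
\]
the linearity condition $\mathbb{E}[W'\mid W] = (1-a)W$ holds with $a = 1 - \sum_{t=0}^n b_t \frac{K_1(t)}{v_1}$, matching the value of $a$ stated in the theorem. The restriction $b_0\neq 1$ together with $K_1(t)/v_1 < 1$ for $t\geq 1$ ensures $0 < a < 1$.

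Second, I would handle the first summand of (\ref{s862}). Taking the square root of the expression for $Var(\mathbb{E}[(W'-W)^2\mid i])$ in Lemma \ref{lamham} and dividing by $a$ gives exactly
\[
\frac{v_1}{a}\sqrt{\sum_{j=1}^n \frac{p_2(j)^2}{v_j}\left(\frac{\sum_{t=0}^n b_t K_j(t)}{v_j}+1-\frac{2\left(\sum_{t=0}^n b_t K_1(t)\right)}{v_1}\right)^2},
\]
which is the first term in the stated bound (I am just relabeling the summation index from $r$ to $j$).

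Third, I would handle the fourth-moment term. The key algebraic observation is that $8\bigl(1 - \sum_{t=0}^n b_t K_1(t)/v_1\bigr) = 8a$, so dividing the fourth-moment formula from Lemma \ref{lamham} by $a$ yields
\[
\frac{\mathbb{E}[(W'-W)^4]}{a} = v_1^2\sum_{r=0}^n\left(8 - \frac{6}{a}\left(1 - \frac{\sum_{t=0}^n b_t K_r(t)}{v_r}\right)\right)\frac{p_2(r)^2}{v_r}.
\]
Taking the fourth root, dividing by $\pi^{1/4}$, and relabeling $r$ to $j$ produces the second term in the stated bound.

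There is no genuine obstacle here: the heavy lifting (decomposing the products of Krawtchouk polynomials, exploiting the orthogonality relations in Lemmas \ref{kraw01}--\ref{krawbase}, and identifying the chain-dependent structure of the moments) was already carried out in Lemma \ref{lamham} and in the proof of Theorem \ref{krawful05}. The only thing to be careful about is matching the linear combinations $\sum_{t=0}^n b_t K_r(t)$ against the single-$T$ expressions $K_r(T)$ in Theorem \ref{krawful05}, which is immediate by linearity of conditional expectation with respect to the law of $T$.
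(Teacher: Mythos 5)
Your proposal is correct and is essentially identical to the paper's own (one-line) proof, which reads ``Applying Lemma \ref{lamham} to Theorem \ref{stn86} proves the following theorem.'' The algebraic simplification you identify, namely that $8\bigl(1-\sum_t b_t K_1(t)/v_1\bigr)=8a$, is precisely what turns the fourth-moment expression from Lemma \ref{lamham} into the stated second term after dividing by $a$ and taking the fourth root, and the first term is a direct read-off of the variance formula.
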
  

We can now analyze the error term of Theorem \ref{krawerr} over $L_T(i,j)$, the family of Markov chains previously defined.

\begin{theorem}\label{krawopt}
The Markov chain $L_T(i,j)$ that minimizes the error term from Theorem \ref{krawerr} is the chain with $\mathbb{P}(T=1)=b_1=1$.
\end{theorem}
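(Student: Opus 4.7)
My plan is to reduce the bound of Theorem \ref{krawerr} to a function of a single scalar quantity attached to the distribution $\{b_t\}$ and then show that both summands of the bound are monotone in that quantity, with the optimum attained when the scalar equals zero. The first structural observation is that $p_2(j)=0$ for $j\geq 3$, so each sum in Theorem \ref{krawerr} collapses to $j\in\{0,1,2\}$. Writing $\lambda_j=\sum_{t=0}^n b_t K_j(t)/v_j$ (so that $a=1-\lambda_1$ and $\lambda_0=1$), I would first peel off the chain-independent pieces: the $j=1$ summand inside the variance term produces the constant $v_1 p_2(1)^2$ after $(1-\lambda_1)^2=a^2$ cancels against the outer $1/a^2$, and the $j=0,1$ summands inside the fourth-moment term contribute the constants $8p_2(0)^2$ and $2p_2(1)^2/v_1$.

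Next I would introduce the scalar $C:=\sum_{t=0}^n b_t c_t$ with $c_t:=1-2K_1(t)/v_1+K_2(t)/v_2$. Direct rearrangement gives $1+\lambda_2-2\lambda_1=C$ and $(1-\lambda_2)/a=2-C/a$, so the remaining $j=2$ summand of the variance term equals $(v_1^2 p_2(2)^2/v_2)(C/a)^2$ and the $j=2$ summand inside the brackets of the fourth-moment term equals $(-4+6C/a)p_2(2)^2/v_2$. Both summands of the full bound are therefore increasing in $C/a$ (with $a>0$ because $b_0\neq 1$), so the bound is jointly minimized whenever $C=0$. Since $\{b_t\}$ is a probability distribution, this minimum is attained precisely by distributions supported on $\{t:c_t=0\}$.

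The main obstacle is establishing that $c_t\geq 0$ for every $t\in\{0,\ldots,n\}$ with equality exactly on $\{0,1\}$. I would handle this by direct computation: use Lemma \ref{krawswap} to rewrite $K_j(t)/v_j=K_t(j)/v_t$, plug in the explicit Krawtchouk formula for $K_t(1)$ and $K_t(2)$, and collect terms over the common denominator $(q-1)^2 n(n-1)$. After the $n$-terms cancel in pairs, I expect to arrive at the closed form
\[
c_t=\frac{t(t-1)q^2}{n(n-1)(q-1)^2},
\]
from which both the non-negativity and the equality set $\{0,1\}$ are immediate. Combined with the previous paragraph this shows that the minimum of the bound is attained exactly on the one-parameter family $\{b_0+b_1=1,\ b_0<1\}$; since $c_0=\alpha_0=0$ the bound is constant along this family, and the canonical representative $b_1=1$ is the minimizer asserted in Theorem \ref{krawopt}.
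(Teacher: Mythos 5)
Your proposal is correct and follows essentially the same route as the paper. Both arguments observe that $p_2(j)=0$ for $j\geq 3$, that the $j=0,1$ contributions are chain-independent (with $\tfrac{\lambda_j-1}{a}$ equal to $0$ and $-1$ respectively), and that the entire dependence on $\{b_t\}$ enters through the single quantity $\tfrac{\lambda_2-1}{a}+2$, which the paper shows equals
$\left(\tfrac{q}{q-1}\right)\tfrac{\sum_{t\ge 2} b_t\, t(t-1)/(n(n-1))}{\sum_{t\ge 1} b_t\, t/n}$. Your $C/a$ with $c_t=\tfrac{t(t-1)q^2}{n(n-1)(q-1)^2}$ is exactly this expression (note $C=\tfrac{q^2}{(q-1)^2}B$ and $a=\tfrac{q}{q-1}A$ in the paper's $A,B$ notation), and your identity $(1-\lambda_2)/a=2-C/a$ reproduces the paper's reduction; I have checked the closed form for $c_t$ and it is correct. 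The only presentational difference is that you make the scalar $c_t$ explicit and verify $c_t\geq 0$ with equality exactly on $\{0,1\}$, whereas the paper leaves the analogous step as a ``straightforward calculation''; the mathematical content is the same.
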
  
\begin{proof}
The right hand side of the inequality of Theorem \ref{krawerr} can be rewritten as
\begin{align}
&v_1\sqrt{\sum_{j=1}^n \frac{p_2(j)^2}{v_j}\left(\frac{\left(\sum_{t=0}^nb_t\frac{K_j(t)}{v_j}\right)-1}{a}+2\right)^2} \label{krawt1}\\
&+\frac{\sqrt{v_1}}{\pi^{1/4}}\left[\sum_{j=0}^n\left(8+6\left(\frac{\left(\sum_{t=0}^nb_t\frac{K_j(t)}{v_j}\right)-1}{a}\right)\right)\frac{p_2(j)^2}{v_j}\right]^{1/4}\label{krawt2}.
\end{align}

We will show that
\begin{align}
&\frac{\left(\sum_{t=0}^nb_t\frac{K_j(t)}{v_j}\right)-1}{a} \label{krawt}
\end{align}
is minimum for each $j$ under some set $\mathcal{B}=\{b_t\}_{t=0}^n$,  which implies (\ref{krawt2}) is minimum under $\mathcal{B}$ (notice $v_j \geq 0$ for all $j$).
We will then show that the minimum value of (\ref{krawt}) is no less than $-2$ for each $j$, so that (\ref{krawt1}) is also minimum under $\mathcal{B}$.

Since $p_2(j)=0$ for $j\geq3$, we only consider $j=0,1,2$.  For $j=0$, $K_0(t)=v_0=1$, so that (\ref{krawt}) does not depend on $t$.
For $j=1$, the numerator of (\ref{krawt}) is equal to $-a$ so that (\ref{krawt}) is independent of $t$.  For $j=2$, a straightforward calculation (using $\sum_{t=0}^nb_t=1$) yields
\begin{align}
&\frac{\left(\sum_{t=0}^nb_t\frac{K_2(t)}{v_2}\right)-1}{a}+2
=\left(\frac{q}{q-1}\right)\frac{\sum_{t=2}^nb_t\frac{t(t-1)}{n(n-1)}}{\sum_{t=1}^nb_t\left(\frac{t}{n}\right)}\label{krawt3}.
\end{align}
Since the summand in the numerator is $0$ when $t=1$ and positive for all other $t$ values (where $b_t$ is positive),
the final term in (\ref{krawt3}) is minimum for $b_1+b_0=1$ 
and $b_t=0$ for $t>1$. 
\end{proof}

\begin{remarks}
\mbox{}
\begin{enumerate}
\item Similar to Section \ref{binsec}, Theorem \ref{LT} implies that the case $b_0+b_1=1$ corresponds to the Markov chain $L_T$ having step
size at most one and with associated value of $a$ from Theorem \ref{stn86} equal to $\frac{qb_1}{(q-1)n}$.  
Among chains with $b_0=0$ the chain yielding the best bound has the smallest maximum step size and the smallest value of $a$.
We refer to the remarks following Theorem \ref{binmthm} on this restriction.
\item It is interesting to note that the size of the bound on the error in this section and in Section \ref{binsec} both depend on the underlying chain
through the same term ($B/A$ from
Section \ref{binsec}).  This is obvious from the case $q=2$, since this case is the same in both sections, but it is not clear
why this should carry over to other values of $q$.
\end{enumerate}
\end{remarks}





\section{Plancherel Distribution on a Group}\label{planchsec}

In this section we examine the Plancherel measure of the random walk generated by the conjugacy class of transpositions on $S_n$
(the symmetric group on $n$ symbols).  First we describe the setup for any group in order to state the theorems we will use in the
utmost generality. Let $G$ be a group and $C$ be a nontrivial conjugacy class of $G$ such that $C=C^{-1}$. 
Define the random walk on $G$ generated by $C$, a Markov chain with state space $G$,
as follows: given $g$ in $G$, the next step in the chain is $gh$ where $h$ is chosen
uniformly at random from $C$.
Now, denote the set of irreducible
representations of $G$ by $Irr(G)$.  From \cite{dish81}, for each character $\chi^{\lambda}$ of $\lambda$ in
$Irr(G)$, there is an eigenvalue of the random walk on $G$ generated by $C$ given
by $\frac{\chi^\lambda(C)}{dim(\lambda)}$ (termed a character ratio) occurring with multiplicity $dim(\lambda)^2$.
A well known fact from representation theory is (see for example \cite{sa01}) \[\sum_{\lambda \in Irr(G)}dim(\lambda)^2=|G|,\] so that we
define the Plancherel measure of $G$ to choose $\lambda$ in $Irr(G)$ 
with probability $\frac{dim(\lambda)^2}{|G|}$.  Then the unit variance random variable
$W(\lambda)=\frac{|C|^{1/2}\chi^{\lambda}(C)}{dim(\lambda)}$ is essentially an eigenvalue
of the random walk above chosen uniformly at random.

In \cite{ful05} the author defines a Markov chain reversible with respect to the Plancherel measure of a group $G$ with the probability of
transitioning from $\lambda$ to $\rho$ given by
\[L_\tau(\lambda,\rho)=\frac{dim(\rho)}{dim(\lambda)dim(\tau)}\frac{1}{|G|}\sum_{g \in G}\chi^\lambda(g)\chi^\tau(g)\overline{\chi^\rho(g)}.\]
Here $\tau$ is some representation of $G$.
Following the usual setup, define an exchangeable pair $(W,W')$ by $W=W(\lambda)$
where $\lambda$ is chosen from the Plancherel measure of $G$, and define $W'=W(\rho)$
where $\rho$ is given by one step in the above chain.  For the sake of continuity, we will postpone discussing
properties of this Markov chain family until later in this section.

Using facts from representation theory, \cite{ful05} proves the following lemma which allows for the application of Theorem \ref{stn86}.

\begin{lemma}\cite{ful05}\label{gplanchlem1}
$\mathbb{E}(W'|W)=\left(\frac{\chi^\tau(C)}{dim(\tau)}\right)W.$
\end{lemma}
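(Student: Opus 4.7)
The plan is to compute $\mathbb{E}(W'|\lambda)$ directly from the definitions and then appeal to the second orthogonality relation for irreducible characters. Since the final answer will turn out to be $\frac{\chi^\tau(C)}{\dim(\tau)}W(\lambda)$, a linear function of $W(\lambda)$, the conditional on $\lambda$ will automatically equal the conditional on $W$, so it suffices to compute $\mathbb{E}(W'|\lambda)$.

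First I would write
\[
\mathbb{E}(W'|\lambda) = \sum_{\rho \in Irr(G)} L_\tau(\lambda,\rho)\, W(\rho)
= \frac{|C|^{1/2}}{\dim(\lambda)\dim(\tau)\,|G|} \sum_{g \in G} \chi^\lambda(g)\chi^\tau(g) \sum_{\rho \in Irr(G)} \overline{\chi^\rho(g)}\,\chi^\rho(C),
\]
where the factor $\dim(\rho)$ cancels between the transition probability and $W(\rho)$. This is the key rearrangement: by interchanging the sums over $\rho$ and $g$, I isolate an inner sum over $\rho$ which is exactly what the second orthogonality relation governs.

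Next I would apply the second orthogonality relation: for any $g, h \in G$,
\[
\sum_{\rho \in Irr(G)} \overline{\chi^\rho(g)}\,\chi^\rho(h) = \frac{|G|}{|C_h|}\,\mathbf{1}[g \in C_h],
\]
where $C_h$ is the conjugacy class of $h$. Taking $h \in C$ and writing $\chi^\rho(C)$ for the common value on $C$, the inner sum collapses to $\frac{|G|}{|C|}\mathbf{1}[g \in C]$. This reduces the outer sum over $g$ to a sum over $g \in C$ only, and on that set $\chi^\lambda(g) = \chi^\lambda(C)$ and $\chi^\tau(g) = \chi^\tau(C)$ because characters are class functions.

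Substituting back, the $|G|$'s cancel and one factor of $|C|$ cancels against $|C|$ elements in the sum, yielding
\[
\mathbb{E}(W'|\lambda) = \frac{|C|^{1/2} \chi^\lambda(C)\chi^\tau(C)}{\dim(\lambda)\dim(\tau)} = \frac{\chi^\tau(C)}{\dim(\tau)} \cdot W(\lambda),
\]
which proves the lemma. I do not anticipate a real obstacle here; the only point requiring a brief comment is that $C = C^{-1}$ ensures $\chi^\rho(C)$ is real, so no additional conjugation issues arise, and the bookkeeping reduces to a single application of the orthogonality relation.
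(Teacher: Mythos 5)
Your proof is correct. The paper cites \cite{ful05} for this lemma without reproducing the argument, but your computation---cancel $\dim(\rho)$ between $L_\tau(\lambda,\rho)$ and $W(\rho)$, interchange the sums over $\rho$ and $g$, and collapse the inner sum via the second (column) orthogonality relation---is the standard representation-theoretic route used in the cited source, and your observation that $\mathbb{E}(W'\mid\lambda)$ is a function of $W(\lambda)$ alone (so that the tower property turns $\mathbb{E}(W'\mid\lambda)$ into $\mathbb{E}(W'\mid W)$) correctly closes the gap.
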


\begin{theorem}\cite{ful05}\label{gplanch}
Let $C$ be a conjugacy class of a finite group $G$ such that $C=C^{-1}$ and fix a nontrivial irreducible representation $\tau$
of $G$ whose character is real valued.  Let $\lambda$ be a random irreducible representation chosen from the Plancherel measure
of $G$. Let $W=\frac{|C|^{1/2}\chi^\lambda(C)}{dim(\lambda)}$.  Then for all real $x_0$,
\begin{align*}
\bigg| \mathbb{P} (W \leq x_0)- & \frac{1}{\sqrt{2\pi}}\int_{-\infty}^{x_0}e^{-\frac{x^2}{2}}dx \bigg| \\
	& \leq |C| \sqrt{\sum_{K \not= id}\frac{p_2(K)^2}{|K|} \left( \frac{\frac{\chi^\tau(K)}{dim(\tau)}-1}{a_\tau}+2 \right) ^2} \\
	& + \left[ \frac{|C|^2}{\pi}\sum_{K}\frac{p_2(K)^2}{|K|}\left(8+6\left(\frac{\frac{\chi^\tau(K)}{dim(\tau)}-1}{a_\tau}\right)\right) \right]^{1/4}. \\
\end{align*}
Here the sums are over conjugacy classes $K$ of $G$, $a_\tau=1-\frac{\chi^\tau(C)}{dim(\tau)}$, and $p_2(K)$
is the probability that the random walk on $G$ generated by $C$ started at the identity is at the conjugacy class $K$ after two steps.
\end{theorem}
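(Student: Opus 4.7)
The plan is to apply Theorem \ref{stn86} to the exchangeable pair $(W,W')=(W(\lambda),W(\rho))$ arising from the chain $L_\tau$, with the linearity condition $\mathbb{E}(W'|W)=(1-a_\tau)W$ already supplied by Lemma \ref{gplanchlem1} (so $a=a_\tau$). What remains is to bound the two quantities that appear in (\ref{s862}), namely $\operatorname{Var}(\mathbb{E}[(W'-W)^2|W])$ and $\mathbb{E}(W'-W)^4$, and show they match the right-hand side of the stated inequality.

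First I would compute $\mathbb{E}[(W'-W)^2|\lambda]$ by expanding the square, pulling the constant $W^2$ out, and using Lemma \ref{gplanchlem1} for the cross term; the only nontrivial ingredient is $\mathbb{E}[W'^2|\lambda]=\frac{|C|}{dim(\lambda)^2}\sum_\rho L_\tau(\lambda,\rho)\frac{\chi^\rho(C)^2}{dim(\rho)^2}\cdot dim(\lambda)^2/\,\ldots$, handled by substituting the character-sum definition of $L_\tau$ and invoking the second orthogonality relation $\frac{1}{|G|}\sum_{g}\chi^\rho(g_1)\overline{\chi^\rho(g_2)}\cdot\ldots$. The goal is an identity of the form
\begin{equation*}
\mathbb{E}[(W'-W)^2|\lambda]=|C|\sum_{K}\frac{p_2(K)}{|K|}\chi^\lambda(K)\,\overline{\cdot}\,\Bigl(\tfrac{\chi^\tau(K)}{dim(\tau)}+\text{lower order}\Bigr)+\Bigl(1-\tfrac{2\chi^\tau(C)}{dim(\tau)}\Bigr)W^2,
\end{equation*}
where the key step is recognizing that the inner character sum, after orthogonality reduces the sum over $\rho$, is exactly the two-step transition probability $p_2(K)$ of the random walk on $G$ generated by $C$, started at the identity. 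I would follow the template of the analogous Hamming-scheme computation in Lemma \ref{fks}, which in turn is exactly Theorem \ref{krawful05} in this language.

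Having an expression for $\mathbb{E}[(W'-W)^2|\lambda]$ as a class function of $\lambda$, the variance is computed by noting that distinct characters $\chi^\mu$ (with $\mu$ running over non-trivial irreducibles indexing the conjugacy class expansion) are orthogonal under the Plancherel measure, so only diagonal terms survive. This collapses the variance to
\begin{equation*}
\operatorname{Var}(\mathbb{E}[(W'-W)^2|W])=|C|^2\sum_{K\ne id}\frac{p_2(K)^2}{|K|}\Bigl(\tfrac{\chi^\tau(K)}{dim(\tau)}+1-\tfrac{2\chi^\tau(C)}{dim(\tau)}\Bigr)^2,
\end{equation*}
which, divided by $a_\tau^2$ and rewritten by factoring $a_\tau$ out of the parenthesis, produces the first summand in Theorem \ref{gplanch}. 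For the fourth moment, expand $(W'-W)^4$, take the (unconditional) expectation, and use $\mathbb{E}(W'-W)^2=2a_\tau$ together with a parallel computation of $\mathbb{E}[W'^3\cdot W]$ and $\mathbb{E}W'^2W^2$ via $L_\tau$ and orthogonality; the $p_2(K)^2/|K|$ structure appears from the same mechanism as in the variance calculation.

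The main obstacle is not conceptual but bookkeeping: carefully tracking the character sums, using the orthogonality relations $\frac{1}{|G|}\sum_g\chi^\alpha(g)\overline{\chi^\beta(g)}=\delta_{\alpha\beta}$ and $\sum_\lambda\chi^\lambda(g)\overline{\chi^\lambda(h)}=\delta_{[g],[h]}|G|/|[g]|$, and recognizing at each stage that the coefficient that emerges is precisely the two-step transition probability $p_2(K)$ (this is where realness of $\chi^\tau$ and $C=C^{-1}$ enter, ensuring that conjugation symmetry collapses terms properly so the final bound is real and matches the stated form). Once the two moment expressions are in hand, substitution into (\ref{s862}) of Theorem \ref{stn86} with $a=a_\tau$ yields the claimed inequality.
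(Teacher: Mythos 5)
Your sketch is essentially correct and matches the approach used to prove this result; note that the paper itself does not prove Theorem \ref{gplanch} but cites it directly from \cite{ful05}, so the relevant comparison is to Fulman's argument and to the analogous Hamming-scheme computation (Lemma \ref{fks}, Theorem \ref{krawful05}) that the paper does display. Your plan — apply (\ref{s862}) from Theorem \ref{stn86} with $a=a_\tau$ from Lemma \ref{gplanchlem1}, compute $\mathbb{E}[(W'-W)^2|\lambda]$ and then its variance and $\mathbb{E}(W'-W)^4$ via the character-sum definition of $L_\tau$ and the two orthogonality relations, recognizing $p_2(K)$ as the coefficient that emerges — is exactly the right template, and your algebra factoring $a_\tau$ back out of the parenthesis to recover $\bigl(\tfrac{\chi^\tau(K)/\dim\tau-1}{a_\tau}+2\bigr)^2$ checks out against the stated bound.

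Two small points you should make explicit in a full write-up. First, when passing from the conditional expression $\mathbb{E}[(W'-W)^2|\lambda]$ (which is a linear combination of the functions $\lambda\mapsto\chi^\lambda(K)/\dim\lambda$) to its variance, the $K=id$ term contributes a constant (since $\chi^\lambda(id)/\dim\lambda\equiv1$), which is why the sum in the variance bound runs over $K\neq id$ while the fourth-moment bound still sums over all $K$; your sketch writes the correct ranges but doesn't say why. Second, the orthogonality fact you invoke in the variance step should be stated as the Plancherel-measure inner product
\[
\mathbb{E}_\lambda\!\left[\frac{\chi^\lambda(K)}{\dim\lambda}\cdot\frac{\chi^\lambda(K')}{\dim\lambda}\right]=\frac{1}{|G|}\sum_{\lambda}\chi^\lambda(K)\chi^\lambda(K')=\frac{\delta_{K,K'}}{|K|},
\]
where the realness of $\chi^\tau$ and $C=C^{-1}$ are what let you drop the conjugates; this is the precise source of the $p_2(K)^2/|K|$ structure in both moment bounds.
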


After proving this theorem, the author applies it with the choice of the irreducible representation $\tau$ corresponding to the partition $(n-1,1)$ 
(more on this notation later) to obtain a central limit theorem with an error term in the case where
$G=S_n$ and $C$ is the conjugacy class of i-cycles.  We will show that this choice of $\tau$ minimizes the error term from Theorem \ref{gplanch}
in the special case where $C$ is the conjugacy class of transpositions (2-cycles).


Before going further we will explain the notation above and state some facts about the irreducible representations of
$S_n$ found in \cite{sa01}.
For $S_n$, the irreducible representations are indexed by partitions of the integer $n$ where the partition $(n)$
corresponds to the
trivial representation.  Another way to index the irreducible representations (which can be more useful for combinatorial reasons) is
to associate to each partition a ``tableau" which is a left justified array of equally sized, aligned boxes (we will typically
abuse notation and refer to $\lambda$ as both the partition and the diagram).  For
a partition $\lambda=(\lambda_1,\lambda_2, \ldots , \lambda_k)$ of $n$, where $\lambda_1 \geq \ldots \geq \lambda_k>0$, the
associated tableau has $\lambda_1$ boxes in the first row, $\lambda_2$ boxes in the second row, and so on. Now, numbering the boxes one
through $n$
left to right, top to bottom (with this indexing, this is technically the largest Standard Young Tableau), we make the following definition.

\begin{definition}\label{condef}
Let $\lambda=(\lambda_1,\lambda_2, \ldots , \lambda_k)$ a partition of $n$.  Define the content of box $i$ (in the labeling above) of $\lambda$ to be
$c_\lambda(i)=column_\lambda(i)-row_\lambda(i)$.
\end{definition}

In order to clarify these definitions, Table \ref{TAB1} is the labeled tableau for $\lambda=(5,3,3,1)$, a partition of $n=12$.

\begin{table}[h]
\begin{center}
\begin{tabular}{|c|c|c|c|c|}
\hline
1&2&3&4&5\\
\hline
6&7&8\\
\cline{1-3}
9&10&11\\
\cline{1-3}
12\\
\cline{1-1}
\end{tabular}
\end{center}
\caption{Labeled tableau for $\lambda=(5,3,3,1)$.}
\label{TAB1}
\end{table}

Table \ref{TAB2} below is the same tableau, but the box labeled $i$ above now has the value of the content $c_\lambda(i)$.
\begin{table}[h]
\begin{center}
\begin{tabular}{|c|c|c|c|c|}
\hline
0&1&2&3&4\\
\hline
-1&0&1\\
\cline{1-3}
-2&-1&0\\
\cline{1-3}
-3\\
\cline{1-1}
\end{tabular}
\end{center}
\caption{$\lambda=(5,3,3,1)$ with box $i$ labeled $c_\lambda(i)$.}
\label{TAB2}
\end{table}

Because we are specializing to the conjugacy class $C$ of transpositions
and $p_2(K)$ is the probability of being in the conjugacy class $K$ after two steps
in the random walk on $S_n$ generated by $C$ starting at the identity element, $p_2(K)=0$ for many conjugacy classes.
The following lemma formulates
$\frac{\chi^\lambda(K)}{dim(\lambda)}$ in terms of the contents of $\lambda$
for conjugacy classes $K$ where $p_2(K)\not=0$
(the conjugacy classes that contribute to the error term in Theorem \ref{gplanch}).  The lemma is proved in the form shown here using
Murphy's elements in \cite{digr89}, but can also be found in terms of the $\lambda_i$ in \cite{ing50}.
\begin{lemma}\cite{digr89}\label{cont}
Let $\chi^\lambda(j)$ and $\chi^\lambda(2,2)$ be the character of the irreducible representation $\lambda$ at a the conjugacy class of a j-cycle 
and two 2-cycles, respectively. Then  

\begin{align*} 
\frac{\chi^\lambda(id)}{dim(\lambda)}&=1, \\
\frac{\chi^\lambda(2)}{dim(\lambda)}&=\frac{2}{n(n-1)}\sum_{i=1}^nc_\lambda(i), \\
\frac{\chi^\lambda(3)}{dim(\lambda)}&=\frac{3}{n(n-1)(n-2)}\left(\sum_{i=1}^nc_\lambda(i)^2-\binom{n}{2}\right), \\
\frac{\chi^\lambda(2,2)}{dim(\lambda)}&=\frac{1}{6\binom{n}{4}}\left(\left(\sum_{i=1}^nc_\lambda(i)\right)^2-3\sum_{i=1}^nc_\lambda(i)^2+n(n-1)\right). \\
\end{align*}
\end{lemma}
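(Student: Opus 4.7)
My plan is to prove the four identities in Lemma \ref{cont} using the Jucys--Murphy elements $J_k = (1,k) + (2,k) + \cdots + (k-1,k) \in \mathbb{C}[S_n]$ for $k = 2, \ldots, n$. Three facts drive the argument: (a) the $J_k$ commute pairwise, (b) any symmetric polynomial in $J_2,\ldots,J_n$ lies in the center of $\mathbb{C}[S_n]$, and (c) in Young's seminormal basis for the irreducible representation $\lambda$, each $J_k$ acts diagonally, with the eigenvalue on the basis vector indexed by a standard Young tableau $T$ equal to the content of the box containing $k$ in $T$. Combining (c) with $c_\lambda(1)=0$ shows that any symmetric polynomial $f(J_2,\ldots,J_n)$ acts on $\lambda$ as the scalar $f(c_\lambda(2),\ldots,c_\lambda(n))$. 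On the other hand, by Schur's lemma a class sum $Z_K = \sum_{g\in K} g$ acts on $\lambda$ as the scalar $|K|\chi^\lambda(K)/dim(\lambda)$, so equating the two expressions for the same central element will produce a character ratio formula.

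The first identity is immediate. For the second, observe $\sum_{k=2}^n J_k = \sum_{1\le i<j\le n}(i,j) = Z_{(2)}$, the class sum of transpositions; equating the two scalar actions gives $\binom{n}{2}\chi^\lambda(2)/dim(\lambda) = \sum_i c_\lambda(i)$, which rearranges to the stated formula. For the third, I would expand $J_k^2 = (k-1)\,e + \sum_{i\ne j < k}(i,k)(j,k)$ and use that $(i,k)(j,k)=(i,k,j)$ is a $3$-cycle; a short check verifies that each $3$-cycle on $\{a,b,c\}$ appears exactly once when one sums over $k$, yielding $\sum_k J_k^2 = \binom{n}{2}\,e + Z_{(3)}$. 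Using $|C_{(3)}| = n(n-1)(n-2)/3$ then gives the third identity. For the fourth, I would square $Z_{(2)}$ directly: any product of two transpositions is the identity, a $3$-cycle, or a $(2,2)$-cycle, and counting factorizations shows $Z_{(2)}^2 = \binom{n}{2}\,e + 3\,Z_{(3)} + 2\,Z_{(2,2)}$. Substituting the already-known scalar actions of $Z_{(2)}$ and $Z_{(3)}$ into this identity and using $|C_{(2,2)}| = 3\binom{n}{4}$ isolates $\chi^\lambda(2,2)/dim(\lambda)$ and produces the final formula.

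The main obstacle is not any single calculation but the combinatorial bookkeeping of multiplicities --- verifying that, when one sums $J_k^2$, each $3$-cycle appears exactly once, and that when one squares $Z_{(2)}$, each $3$-cycle appears three times while each $(2,2)$-cycle appears twice. The rest of the argument is an algebraic translation: every statement about characters evaluated on small conjugacy classes comes from writing a natural central element of $\mathbb{C}[S_n]$ in two ways, once as a sum of class sums (which gives character ratios) and once as a symmetric polynomial in $J_2,\ldots,J_n$ (which gives sums of contents). Once the counts above are in hand, solving for the character ratios of interest is routine linear algebra.
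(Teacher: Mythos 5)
Your proof is correct and is precisely the Murphy's-element (Jucys--Murphy) argument that the paper attributes to \cite{digr89}; the paper itself supplies no proof, only this citation. The multiplicity counts you flag as the main bookkeeping ($\sum_k J_k^2=\binom{n}{2}e+Z_{(3)}$ and $Z_{(2)}^2=\binom{n}{2}e+3Z_{(3)}+2Z_{(2,2)}$, with $|C_{(2,2)}|=3\binom{n}{4}$) all check out and yield exactly the stated formulas.
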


For the example from Tables \ref{TAB1} and \ref{TAB2} where $\lambda=(5,3,3,1)$, the sum of the contents is equal to four and $n$ is equal to twelve.
Thus Lemma \ref{cont} implies, for example, $\frac{\chi^\lambda(2)}{dim(\lambda)}=2/33$.

We pause here to discuss some relevant properties of the chain $L_\lambda$.  First, we reiterate the remarks at the end of Section \ref{NA}; the
definition of the family of Markov chains stems from the orthogonality relations of irreducible characters (see \cite{ful08} and the 
references therein).  
Second, because it is not known how to combinatorially describe $L_\lambda$ in general, it is not obvious how to relate the choice of $\lambda$
to step size. However, some representations do have nice descriptions and we can use these to gain some intuition.  From \cite{ful05a}, if 
$\nu$ is the defining representation on $S_n$, then given an irreducible representation
$\mu$, $L_\nu$ follows the rule of removing a corner box of $\mu$ uniformly at random (so that the resulting diagram of $n-1$ boxes remains
a tableau) and moving it to a uniformly chosen
concave open position of the altered
diagram (to obtain a new tableau of size $n$). For $\rho$ the trivial representation, $\nu$ the defining representation, and $\tau:=(n-1,1)$,
we have \cite{sa01} 
$\chi^\nu=\chi^\tau+\chi^\rho$, $dim(\rho)=1$, $dim(\nu)=n$, and $dim(\tau)=n-1$ so that
\[L_\tau=\frac{nL_\nu-L_\rho}{n-1}.\]
Now, using the orthogonality relations of characters and the fact that $\chi^\rho\equiv 1$ it is easy to see the chain $L_\rho$
holds with probability one. Thus, the previous remarks imply that from an irreducible representation $\mu$,
the chain $L_\tau$ moves at most one box of $\mu$.   
Also, again using orthogonality relations,
it follows that for any irreducible representations $\mu$ and $\lambda$, we have $L_{\lambda}(\rho,\mu)=\delta_{\lambda\mu}$.
That is, from the trivial representation the chain $L_{\lambda}$ moves to $\lambda$ with probability
one, which will move more than one box if $\lambda\not=\tau$.  Lemma \ref{cont} implies that $W(\mu)=\binom{n}{2}^{-1/2}\sum_{i=1}^nc_\mu(i)$,
so that moving more boxes of a partition to obtain $W'$ from $W$ corresponds to a larger step size. This is one sense in which
$\tau$ has the smallest step size among nontrivial irreducible representations.

As discussed previously, a more quantitative measure of step size of the chain $L_\lambda$ is
the value of $a_\lambda$ from Theorem \ref{gplanch}. From Lemma \ref{gplanchlem1}, we have
\begin{align}
\mathbb{E}(W'|W)&=\left(\frac{\chi^\lambda(2)}{dim(\lambda)}\right)W 
	=\left(\frac{2}{n(n-1)}\sum_{i=1}^nc_\lambda(i)\right)W. \notag
\end{align}
By the definition of $c_\lambda(i)$, it is easy to see that $a_\lambda$ is minimum over nontrivial irreducible representations 
for $\lambda=\tau$, and strictly increases as
boxes of a partition are moved from higher to lower rows (this observation motivates Lemma \ref{reduc} below).
This is another sense in which
$\tau$ has the smallest step size among nontrivial irreducible representations. 

Now, analogous to Section \ref{krawsec}, in order to show that the error term from Theorem \ref{gplanch}
is minimum over nontrivial irreducible representations at
$\tau=(n-1,1)$, we will show that for
each conjugacy class $K$ from Lemma \ref{cont}, the term $T_K(\lambda)
=\left(\frac{\chi^\lambda(K)}{dim(\lambda)}-1\right)\Big/a_\lambda$ 
is minimum and at least negative two for $\lambda=\tau$.  Moreover, we
will define an ordering on all irreducible representations such that
the largest nontrivial irreducible representation in the ordering is $(n-1,1)$ and then show that 
$T_K$ is a decreasing function with respect to the ordering.  This non-standard ordering is a coarsening of the usual dominance
ordering found in \cite{sa01}.

\begin{definition}\label{succ}
Let $\lambda=(\lambda_1, \ldots, \lambda_k)$ and $\mu=(\mu_1, \ldots, \mu_m)$ be two irreducible representations of $S_n$.
We say $\lambda$ succeeds $\mu$, denoted $\lambda \succ \mu$, if $\lambda_1 \geq \mu_1$, $\lambda_i=\mu_i$
for $i=2, \ldots, k-1$, and $\lambda_k \leq \mu_k$. For $i>m$ ($i>k$), take $\mu_i=0$ ($\lambda_i=0$).
\end{definition}

It is obvious that $(n-1,1)=\tau \succ \lambda$ for all nontrivial irreducible representations $\lambda$, so that to prove the claims above it is
enough to show that
$\lambda\succ\mu$ implies $T_K(\lambda) \leq T_K(\mu)$ for each $K$ from Lemma \ref{cont}.  The following lemma is
a simpler characterization of the succession relation.

\begin{lemma}\label{reduc}
For two irreducible representations $\lambda$ and $\mu$ of $S_n$, $\lambda\succ\mu$ if and only if $\mu$ can be transformed from
$\lambda$ only by moving blocks from the associated tableau of $\lambda$ from the top row to either the bottom row of $\lambda$
or to start a new row. 
\end{lemma}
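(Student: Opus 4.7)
The plan is to prove the equivalence by directly unpacking Definition \ref{succ}; the claim is essentially a bookkeeping statement about where the blocks of a Young diagram can move, so the main task is to display the two sides in a form where they are manifestly the same.

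For the ``if'' direction, I would suppose $\mu$ is obtained from $\lambda$ by moving some $t$ blocks out of row $1$ and placing them either into row $k$ or into new rows below row $k$. By construction, $\mu_1 = \lambda_1 - t \leq \lambda_1$, each row $i$ with $2 \leq i \leq k-1$ is untouched so that $\mu_i = \lambda_i$, and row $k$ can only gain blocks, giving $\mu_k \geq \lambda_k$. These three observations are exactly the three conditions of Definition \ref{succ}.

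For the ``only if'' direction, I would assume $\lambda \succ \mu$ and observe that because $\lambda$ and $\mu$ partition the same integer $n$, the middle-row equality forces
\[
\lambda_1 - \mu_1 \;=\; (\mu_k - \lambda_k) + \sum_{j > k} \mu_j,
\]
where every summand on the right is nonnegative by the definition of succession. This identity is itself the recipe for the transformation: take the $\lambda_1 - \mu_1$ blocks removed from row $1$ and deposit $\mu_k - \lambda_k$ of them onto row $k$, with the remainder used to populate new rows $k+1, \ldots, m$ of $\mu$ with $\mu_{k+1}, \ldots, \mu_m$ blocks, respectively. Each individual move is of the required form.

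The one thing that requires care is checking that such a sequence of moves can be carried out while keeping each intermediate diagram a valid partition; this is the only potential subtlety, but it is forced automatically by the fact that $\mu$ itself is a valid partition, so that one can, for instance, fill row $k$ up to $\mu_k$ first and then initiate rows $k+1, k+2, \ldots$ in order. I do not expect any real obstacle: the lemma is ultimately a rephrasing of Definition \ref{succ} in pictorial terms, designed to make the subsequent monotonicity analysis of the functions $T_K(\lambda)$ combinatorially transparent.
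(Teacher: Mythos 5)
Your proposal is correct and matches the paper's argument in substance. The paper's ``only if'' direction is an induction that moves one block at a time (to row $k$ if $\lambda_k < \mu_k$, else starting a new row) and observes that the new tableau still succeeds $\mu$, which is the same ``fill row $k$ first, then rows $k+1, k+2, \ldots$'' recipe you describe, just packaged inductively rather than via the explicit block-count identity $\lambda_1 - \mu_1 = (\mu_k - \lambda_k) + \sum_{j>k}\mu_j$.
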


\begin{proof}
If the defining relations of succession hold, and the two representations are not equal, then either $\lambda_k < \mu_k$ or
$\mu_{k+1} \not = 0$.  In the former case move a block from $\lambda_1$ to $\mu_k$, in the latter start a new row.
Inductively, continuing in
this way will eventually lead to $\mu$ since the
new tableau still succeeds $\mu$. 

Conversely, if $\mu$ can be made from $\lambda$ with the described method, then clearly the defining relations of succession must hold.
\end{proof}

Lemma \ref{reduc} states that in order to prove $\lambda\succ\mu$ implies $T_K(\lambda) \leq T_K(\mu)$ (thus that $\tau=(n-1,1)$ minimizes
the error term), it is enough to prove the
statement in the case $\mu=(\lambda_1-1,\lambda_2,\ldots,\lambda_k+1)$; where in order to cover both
actions described in Lemma \ref{reduc} we break convention and permit $\lambda_k=0$ (only if $\lambda_{k-1} \not = 0)$.  We make one final 
(non-standard) 
definition before
we begin the proof of the claims above.
\begin{definition}
Let $\lambda=(\lambda_1, \ldots, \lambda_k)$ and $\mu=(\lambda_1-1,\lambda_2,\ldots,\lambda_k+1)$.
Define a new tableau, the joint of $\lambda$ and $\mu$ by
\[\lambda|\mu=(\lambda_1-1,\lambda_2, \ldots, \lambda_k).\] 
\end{definition}

Now that we have suitable definitions, we can begin to prove the lemmas that will be used to obtain the main result.

\begin{lemma}
For any tableau $\lambda$, $T_{id}(\lambda)=0$ and $T_{(2)}(\lambda)=-1$.
\end{lemma}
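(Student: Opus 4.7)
The plan is entirely mechanical: both claims follow directly from unpacking the definitions, and there is essentially no obstacle. I would simply evaluate $T_K(\lambda) = \bigl(\tfrac{\chi^\lambda(K)}{\dim(\lambda)} - 1\bigr)\big/a_\lambda$ at the two conjugacy classes in question.

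For $K = id$, I would invoke the first identity of Lemma \ref{cont}, namely $\chi^\lambda(id)/\dim(\lambda) = 1$. The numerator of $T_{id}(\lambda)$ is then $1 - 1 = 0$, and since $\lambda$ is a nontrivial irreducible representation (so that $a_\lambda \neq 0$, as discussed in the paragraph following Lemma \ref{reduc} where $a_\lambda$ is shown to be strictly positive for $\lambda \neq (n)$), we obtain $T_{id}(\lambda) = 0$.

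For $K = (2)$, I would observe that $(2)$ is precisely the conjugacy class $C$ of transpositions used throughout this section. Thus by the very definition of $a_\lambda$ in Theorem \ref{gplanch}, we have $a_\lambda = 1 - \chi^\lambda((2))/\dim(\lambda)$, which rearranges to $\chi^\lambda((2))/\dim(\lambda) - 1 = -a_\lambda$. Dividing by $a_\lambda$ yields $T_{(2)}(\lambda) = -1$.

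The only caveat worth flagging is that division by $a_\lambda$ requires $a_\lambda \neq 0$; this holds for every nontrivial irreducible $\lambda$ of $S_n$ because $\tau = (n-1,1)$ is the unique nontrivial representation minimizing $a_\lambda$, as follows from the content formula in Lemma \ref{cont} and the discussion preceding Definition \ref{succ}. Since the statement is quantified over tableaux $\lambda$ for which $T_K(\lambda)$ is defined, this is not a real obstacle.
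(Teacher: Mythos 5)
Your proof is correct and follows essentially the same route as the paper: the identity case comes from $\chi^\lambda(id)/\dim(\lambda)=1$, and the transposition case from rearranging the definition $a_\lambda = 1 - \chi^\lambda((2))/\dim(\lambda)$. The additional remark about $a_\lambda \neq 0$ for nontrivial $\lambda$ is a sound clarification but does not change the argument.
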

\begin{proof}
The first assertion follows from the fact that for any representation, the identity element is mapped to the
identity matrix so that $\chi^\lambda(id)=dim(\lambda)$.  The second is obtained directly from the definition of $a_\lambda$.
\end{proof}

The next lemma states a simpler criterion for determining which of $T_{(3)}(\lambda)$ and $T_{(3)}(\mu)$ is larger.

\begin{lemma}\label{3simp}
With $\lambda$ and $\mu$ as above, $T_{(3)}(\mu) - T_{(3)}(\lambda)$ is non-negative if and only if $f(\lambda)$
(defined below) is also non-negative.
\begin{align}
f(\lambda)=\left(\lambda_1+\lambda_k-k\right)&\left(\sum_{i=1}^{n-1}c_{\lambda | \mu}(i) -\binom{n}{2}\right)+(\lambda_1-1)(\lambda_k+1-k) \notag\\
 &-\sum_{i=1}^{n-1}c_{\lambda | \mu}^2(i)+\binom{n}{2}+2\binom{n}{3}. \notag
\end{align}
\end{lemma}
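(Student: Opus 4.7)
The plan is a direct algebraic calculation, exploiting that $\lambda$, $\mu$, and their shared subtableau $\lambda|\mu$ differ only by the single box moved from the top row to the bottom. First I would introduce shorthand: $\alpha:=n(n-1)$, $\beta:=n(n-1)(n-2)$, $\gamma:=\binom{n}{2}$, $p:=\sum_{i=1}^{n-1}c_{\lambda|\mu}(i)$, $q:=\sum_{i=1}^{n-1}c_{\lambda|\mu}^2(i)$, and let $a:=\lambda_1-1$, $b:=\lambda_k+1-k$ denote the contents of the removed and added boxes respectively. Then
\[\sum_{i=1}^{n}c_\lambda(i)=p+a,\qquad \sum_{i=1}^{n}c_\lambda^2(i)=q+a^2,\]
with the analogous identities for $\mu$ (replace $a$ by $b$), so Lemma \ref{cont} gives closed forms for $a_\lambda$, $a_\mu$, $\chi^\lambda(3)/\dim(\lambda)$ and $\chi^\mu(3)/\dim(\mu)$ as explicit rational functions of $p,q,a,b,n$.

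Next, since $a_\lambda, a_\mu>0$ (a standing hypothesis needed for Theorem \ref{stn86}), the sign of $T_{(3)}(\mu)-T_{(3)}(\lambda)$ equals the sign of the numerator
\[N\;:=\;a_\lambda\!\left(\frac{\chi^\mu(3)}{\dim(\mu)}-1\right)-a_\mu\!\left(\frac{\chi^\lambda(3)}{\dim(\lambda)}-1\right)\]
when written over the common denominator $a_\lambda a_\mu$. Substituting the expressions above and clearing denominators by multiplying through by $\alpha\beta$, the purely $q$-dependent terms cancel, the quadratic contributions collapse via $b^2-a^2=(b-a)(a+b)$, and the linear contributions contribute a further factor of $b-a$. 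Collecting these,
\[\alpha\beta\,N\;=\;-(a-b)\bigl[\,3(\alpha-2p)(a+b)+6q-6\gamma-2\beta-6ab\,\bigr].\]
Using $\gamma=\alpha/2$ and $2\binom{n}{3}=\beta/3$, a direct identification shows that the bracketed expression equals $-6f(\lambda)$, whence $\alpha\beta\,N=6(a-b)f(\lambda)$.

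Finally, I would verify that $a-b=\lambda_1-\lambda_k+k-2>0$ for any admissible $(\lambda,\mu)$ with $\lambda$ nontrivial, treating separately the cases $\lambda_k>0$ (where $\mu$'s validity forces $\lambda_1>\lambda_2\geq \lambda_k$) and $\lambda_k=0$ (where $\lambda_1\geq 1$ and $k\geq 2$). Together with $\alpha,\beta>0$, this yields that $N$, hence $T_{(3)}(\mu)-T_{(3)}(\lambda)$, has the same sign as $f(\lambda)$. The main obstacle is purely bookkeeping: keeping signs straight when extracting the common factor $(a-b)$ from the three algebraic pieces of $N$ and matching the remaining bracket to $-6f(\lambda)$; no conceptual difficulty is anticipated.
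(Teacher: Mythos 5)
Your proposal is correct and follows essentially the same path as the paper's own proof: put $T_{(3)}(\mu)-T_{(3)}(\lambda)$ over the common denominator $a_\lambda a_\mu$ (both positive since $\sum_i c_\lambda(i)<\binom{n}{2}$ for nontrivial $\lambda$), rewrite the content sums in terms of the shared subtableau $\lambda|\mu$ and the two box contents $\lambda_1-1$ and $\lambda_k+1-k$, extract the common factor $(\lambda_1-1)-(\lambda_k+1-k)$, and observe that this factor is strictly positive; your identity $\alpha\beta\,N=6(a-b)f(\lambda)$ checks out and matches the paper's normalization up to the constant $6$ coming from $\alpha\beta=12\binom{n}{2}\binom{n}{3}$. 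The only cosmetic slip is the phrase "the purely $q$-dependent terms cancel" -- what actually cancels is the product $PR$ (in your shorthand), while the $q$-carrying piece $2R(b-a)$ survives and supplies the $-q$ in $f(\lambda)$; this does not affect the argument.
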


\begin{proof}
\begin{align}
&T_{(3)}(\mu) - T_{(3)}(\lambda) \notag \\
&= \frac{\left(1-\frac{1}{2\binom{n}{3}}\left(\sum_ic_\lambda^2(i)-\binom{n}{2}\right)\right)a_{\mu}-
\left(1-\frac{1}{2\binom{n}{3}}\left(\sum_ic_{\mu}^2(i)-\binom{n}{2}\right)\right)a_{\lambda}}
{a_{\lambda} a_{\mu}}. \label{t3}
\end{align}
Notice that $\sum_ic_\lambda(i)$ is maximized for $\lambda=(n)$ in which case it is $\binom{n}{2}$ and strictly
less for all other tableau.
Thus, for all tableau
$\lambda$ under consideration, $a_\lambda>0$ so that the non-negativity of (\ref{t3}) is determined by the numerator.  The numerator of (\ref{t3})
is equal to
\begin{align}
&\frac{\sum_i(c_{\mu}^2(i)-c_\lambda^2(i))}{2\binom{n}{3}} + \frac{\sum_i(c_\lambda(i)-c_{\mu}(i))}{\binom{n}{2}} \notag \\
&+ \frac{\sum_ic_\lambda^2(i)\sum_jc_{\mu}(j)-\sum_ic_\lambda(i)\sum_jc_{\mu}^2(j)+\binom{n}{2}\sum_i(c_\lambda(i)-c_{\mu}(i))}
{2\binom{n}{3}\binom{n}{2}}. \label{3pos} 
\end{align}

Rewriting  the sums $\sum_ic_\lambda^t(i)=\sum_ic_{\lambda|\mu}^t(i)+(\lambda_1-1)^t$ and
$\sum_ic_{\mu}^t(i)=\sum_ic_{\lambda|\mu}^t(i)+(\lambda_k+1-k)^t$ for $t=1,2$ 
and then simplifying implies (\ref{3pos}) is equal to
\begin{align*}
& \frac{(\lambda_k+1-k)^2-(\lambda_1-1)^2}{2\binom{n}{3}} + \frac{(\lambda_1-1)-(\lambda_k+1-k)}{\binom{n}{2}} \\
&+ \frac{\left((\lambda_1-1)^2-(\lambda_k+1-k)^2\right)\sum_ic_{\lambda|\mu}(i)}{2\binom{n}{3}\binom{n}{2}}\\
&+\frac{(\lambda_1-1)(\lambda_k+1-k)\left((\lambda_1-1)-(\lambda_k+1-k)\right)}{2\binom{n}{3}\binom{n}{2}} \\
&-\frac{((\lambda_1-1)-(\lambda_k+1-k))\sum_ic_{\lambda|\mu}^2(i)}{2\binom{n}{3}\binom{n}{2}}\\
&+\frac{\binom{n}{2}((\lambda_1-1)-(\lambda_k+1-k))}{2\binom{n}{3}\binom{n}{2}}. \\
\end{align*}
Multiplying by $\left(2\binom{n}{3}\binom{n}{2}\right)/((\lambda_1-1)-(\lambda_k+1-k))$ proves the lemma
(since $\lambda_1>\lambda_k$ and $k\geq2$, this multiplication does not affect the non-negativity of the term).
\end{proof}

Now we can prove the following lemma.

\begin{lemma}\label{3mon}
Let $\lambda$ and $\mu$ be two irreducible representations on the symmetric group.
If $\lambda\succ\mu$, then $T_{(3)}(\lambda) \leq T_{(3)}(\mu).$
\end{lemma}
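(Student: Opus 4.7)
My plan is to first invoke Lemma \ref{reduc}: the succession relation $\lambda \succ \mu$ is generated by elementary moves, each transferring a single box from row $1$ of the current tableau to its bottom row (or starting a new row of length one, in the convention that $\lambda_k = 0$ is allowed). Since the claimed monotonicity of $T_{(3)}$ will follow by transitivity from the single-step case, it suffices to show $T_{(3)}(\lambda) \leq T_{(3)}(\mu)$ when $\mu = (\lambda_1 - 1, \lambda_2, \ldots, \lambda_k + 1)$, which by Lemma \ref{3simp} is equivalent to proving $f(\lambda) \geq 0$.

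To evaluate $f(\lambda)$, I would write $\lambda|\mu$ in terms of its row lengths $\ell_1 = \lambda_1 - 1$ and $\ell_r = \lambda_r$ for $r \geq 2$, then compute $S_1 := \sum_i c_{\lambda|\mu}(i)$ and $S_2 := \sum_i c_{\lambda|\mu}(i)^2$ row-by-row, using the fact that the contents in row $r$ are exactly the integers $1-r,\, 2-r,\, \ldots,\, \ell_r - r$. Standard power-sum formulas then yield the closed forms
\[
S_1 = \sum_{r=1}^k \frac{\ell_r(\ell_r + 1 - 2r)}{2}, \qquad S_2 = \sum_{r=1}^k \left[\frac{\ell_r(\ell_r+1)(2\ell_r+1)}{6} - r\ell_r(\ell_r+1) + r^2 \ell_r\right].
\]
Substituting these into the definition of $f$ produces a polynomial in $(\lambda_1, \ldots, \lambda_k)$ whose non-negativity is precisely the desired inequality.

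Small examples (for instance $(5,3) \to (4,4)$ and $(4,2) \to (3,3)$) strongly suggest that $f \equiv 0$ when $k = 2$, so equality holds for two-row successions, while $f > 0$ for $k \geq 3$. Accordingly, the cleanest strategy is to verify $f = 0$ directly in the two-row case by an explicit algebraic simplification, and then to show that for $k \geq 3$ one can decompose $f$ as the (vanishing) two-row expression plus a non-negative remainder coming from the intermediate rows, either by induction on $k$ or by tracking how inserting a middle row modifies $S_1$, $S_2$, and $a_\lambda$ simultaneously.

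The main obstacle will be establishing non-negativity of this remainder uniformly. The naive interval estimate $c_{\lambda|\mu}(i) \in [1-k,\, \lambda_1 - 2]$, which gives $S_2 \leq (\lambda_1 - k - 1)S_1 + (n-1)(k-1)(\lambda_1 - 2)$, is too coarse to detect the $k = 2$ equality case, so a sharper row-level bookkeeping, rather than a crude quadratic bound, will be essential.
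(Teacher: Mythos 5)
Your opening moves are the same as the paper's: reduce via Lemma~\ref{reduc} to a single box move from row~$1$ to row~$k$, then via Lemma~\ref{3simp} to showing $f(\lambda)\geq 0$. Your observation that $f\equiv 0$ when $k=2$ is correct (I verified several cases, e.g.\ $(5,3)$, $(4,2)$, $(6,2)$, $(3,1)$) and is a genuinely clean structural fact that the paper's proof never surfaces explicitly. The proposed induction, however, is on $k$ (number of rows), whereas the paper inducts on $n$ (number of boxes), associating to each $\lambda$ of size $n+1$ a tableau $\phi$ of size $n$ by deleting one carefully chosen box and then bounding $f(\phi)-f(\lambda)$ in three separate cases. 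These are genuinely different decompositions, and yours has the appeal that the base case is an identity rather than a small verification.

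That said, the proposal has a real gap: the inductive step is not carried out, and you acknowledge this yourself. Saying that a ``sharper row-level bookkeeping'' will be needed is not the same as supplying it, and for an induction on $k$ the mechanics are not even fully specified --- inserting or removing a middle row changes $n$ as well as $k$, so it is unclear what the intermediate object is, what the induction hypothesis is being applied to, or what the quantity ``$f$ of a smaller object'' means. The paper avoids this ambiguity by working with $n$ directly, where removing a single box keeps every quantity well-defined; its three cases ($\lambda_k\neq 0$; $\lambda_k=0,\lambda_{k-1}\neq 1$; $\lambda$ ending in $\ldots,1,0$) exactly track the boundary behavior that your plan sweeps into the unresolved ``remainder.'' Until you specify the row-removal operation, compute the change in $f$ under it, and bound that change with the same care the paper devotes to $f(\phi)-f(\lambda)$, this remains an interesting plan rather than a proof. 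If you want to salvage the $k=2$ identity, one viable route is to keep the paper's box-removal induction on $n$ but use the $k=2$ identity in place of the paper's $n=3$ base case, which would tighten the argument without requiring a wholly new inductive structure.
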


\begin{proof}
Lemma \ref{3simp} implies that in order to show the monotonicity of $T_{(3)}$ with respect to the succession relation, we only need to show that $f(\lambda)\geq 0$ for all
tableau $\lambda=(\lambda_1, \ldots, \lambda_k)$ with $\lambda_1>\lambda_2$ and $\lambda_k<\lambda_{k-1}$ (where $\lambda_k$ may possibly be zero). In order to prove
the lemma, we use induction on the $n$, the number being partitioned.  In each of the three cases below, we associate to each tableau
$\lambda$
of size $n+1$ a tableau $\phi$ of size $n$ where $f(\phi) \geq 0$ by the induction hypothesis.  It can be easily verified that in the
case of $n=3$, the only nontrivial tableau satisfying $\lambda_1>\lambda_2$ and $\lambda_k<\lambda_{k-1}$ is $\lambda=(2,1,0)$, and
$f(\lambda)\geq 0$.

\vspace{2.5mm}
\noindent Case 1: $\lambda_k \not = 0$.

For this case, let $\phi=(\lambda_1,\ldots,\lambda_{k-1},\lambda_k-1)$ be a partition of $n$ and let $\psi$
be equal to the partition $(\lambda_1-1,\ldots,\lambda_{k-1},\lambda_k)$.
Then we have
\begin{align*}
f(\lambda)&=\left(\lambda_1+\lambda_k-k\right)\left(\sum_{i=1}^{n}c_{\lambda | \mu}(i) -\binom{n+1}{2}\right)\\
&+(\lambda_1-1)(\lambda_k+1-k) -\sum_{i=1}^{n}c_{\lambda | \mu}^2(i)+\binom{n+1}{2}+2\binom{n+1}{3},  \\ 
f(\phi)&=\left(\lambda_1+(\lambda_k-1)-k\right)\left(\sum_{i=1}^{n-1}c_{\phi | \psi}(i) -\binom{n}{2}\right) \\
&+(\lambda_1-1)((\lambda_k-1)+1-k) -\sum_{i=1}^{n-1}c_{\phi | \psi}^2(i)+\binom{n}{2}+2\binom{n}{3}. 
\end{align*}

Since $f(\phi)$ is non-negative by the induction hypothesis, we will show that $f(\phi)-f(\lambda)\leq 0$ which will
prove the lemma for this case.
Simplifying this expression using the identity $\binom{n+1}{k}=\binom{n}{k}+\binom{n}{k-1}$ and the fact that
\[\sum_{i=1}^{n-1}(c_{\phi|\psi}(i))^t=\sum_{i=1}^{n}(c_{\lambda|\mu}(i))^t-(\lambda_k-k)^t \,\,\,\,\,\,\,\,\, t=1,2,\]
we obtain
\[f(\phi)-f(\lambda)=(n-1)(\lambda_1-1)+(n-\lambda_1+1)(\lambda_k-k)-\sum_{i=1}^nc_{\lambda|\mu}(i)-\binom{n}{2}.\]

Now we bound the sum of the contents;
\begin{align*}
\sum_{i=1}^nc_{\lambda|\mu}(i) & \geq \sum_{i=1}^{\lambda_1-2}i+\sum_{i=1}^{\lambda_k}(i-k)+(n-(\lambda_1-1)-\lambda_k)(1-(k-1)) \\
									& = \frac{(\lambda_1-1)(\lambda_1-2)}{2}+\frac{\lambda_k(\lambda_k-3)}{2}+(2-k)(n-\lambda_1+1).
\end{align*}
The inequality follows because the first term on the right hand side
is the sum of the contents of row one, the second term is the sum of the contents of
row $k$, and the third term is the number of boxes not in rows one or $k$ times the minimum content of those boxes.

The inequality above shows that $f(\phi)-f(\lambda)\leq g(\lambda_1,\lambda_k)$ where the function $g$ is defined by:
 
\begin{align*}
g(\lambda_1,\lambda_k)&=(n-1)(\lambda_1-1)+(n-\lambda_1+1)(\lambda_k-2) \\
&-\frac{(\lambda_1-1)(\lambda_1-2)}{2} 
-\frac{\lambda_k(\lambda_k-3)}{2}-\binom{n}{2}.
\end{align*}

Let $\mathcal{D}=\{(\lambda_1,\lambda_k):\lambda_k \leq n+1-\lambda_1, 1\leq \lambda_k \leq \lambda_1-2\}$ and 
notice that the allowable
values of $(\lambda_1,\lambda_k)$ are a subset of $\mathcal{D}$.  A straightforward (but tedious) analysis shows
the maximum of $g$ over the domain $\mathcal{D}$ is non-positive, which proves the lemma for this case.

\vspace{2.5mm}

\noindent Case 2: $\lambda_k=0$, $\lambda_{k-1}\not=1$. 

In this case, let $\phi=(\lambda_1,\ldots,\lambda_{k-1}-1,0)$ and then let $\psi$ be equal to
the partition $(\lambda_1-1,\ldots,\lambda_{k-1}-1,1)$.  Then let 
\begin{align*}
f(\phi)=\left(\lambda_1-k\right)&\left(\sum_{i=1}^{n-1}c_{\phi | \psi}(i) -\binom{n}{2}\right)+(\lambda_1-1)(1-k) \\
 &-\sum_{i=1}^{n-1}c_{\phi | \psi}^2(i)+\binom{n}{2}+2\binom{n}{3},
\end{align*}
and notice $f(\phi)\geq0$ by the induction hypothesis. After simplifying using the fact that
\[\sum_{i=1}^{n-1}(c_{\phi|\psi}(i))^t=\sum_{i=1}^{n}(c_{\lambda|\mu}(i))^t-(\lambda_{k-1}-k+1)^t \,\,\,\,\,\,\,\,\, t=1,2,\]
we obtain
\begin{equation}
f(\phi)-f(\lambda)=n(\lambda_1-k)+(\lambda_{k-1}-\lambda_1+1)(\lambda_{k-1}-k+1)-n^2. \label{rfff}
\end{equation}

The right hand side of equation (\ref{rfff}) is increasing in $\lambda_1$, so that the maximum is attained for $\lambda_1=n$. In this case
it is easy to see (\ref{rfff}) is non-positive.



\vspace{2.5mm}

\noindent Case 3: $\lambda=(\lambda_1, \ldots, \lambda_{k-2},1,0)$.

In this case, let $\phi=(\lambda_1,\ldots,\lambda_{k-2},0)$ and let $\psi$ be equal to
the partition $(\lambda_1-1,\ldots,\lambda_{k-2},1)$.  Analogously to the previous two cases, define
\begin{align*}
f(\phi)=\left(\lambda_1-(k-1)\right)&\left(\sum_{i=1}^{n-1}c_{\phi | \psi}(i) -\binom{n}{2}\right)+(\lambda_1-1)(1-(k-1)) \\
 &-\sum_{i=1}^{n-1}c_{\phi | \psi}^2(i)+\binom{n}{2}+2\binom{n}{3},
\end{align*}
which implies
\begin{align*}
f(\phi)-f(\lambda) =(n+k)(\lambda_1-n)+	\sum_{i=1}^{n-1}c_{\phi|\psi}(i)-\binom{n}{2} +3-2k-\lambda_1.
\end{align*}
Using that $\lambda_1\leq n$ and thus $\sum_{i=1}^{n-1}c_{\phi|\psi}(i)\leq \binom{n-1}{2}$,
it is easy to see the term is negative.

\end{proof}

Now we move on to the final conjugacy class needed to prove the result.  As before we have the following lemma which states a
simpler criterion for determining which of $T_{(2,2)}(\lambda)$ and $T_{(2,2)}(\mu)$ is larger.

\begin{lemma}\label{22simp}
With $\lambda$ and $\mu$ as above, $T_{(2,2)}(\mu) - T_{(2,2)}(\lambda)$ is non-negative if and only if $h(\lambda)$
(defined below) is also non-negative.
\begin{align*}
h(\lambda)&=2\left(\lambda_1+\lambda_k-k\right)\left(\binom{n}{2}-\sum_{i=1}^{n-1}c_{\lambda | \mu}(i) \right)-2(\lambda_1-1)(\lambda_k+1-k) \\
 &-2\binom{n}{2}\sum_{i=1}^{n-1}c_{\lambda | \mu}(i)+\left(\sum_{i=1}^{n-1}c_{\lambda | \mu}(i)\right)^2
+3\sum_{i=1}^{n-1}c_{\lambda | \mu}^2(i)+6\binom{n}{4}-2\binom{n}{2}. \\
\end{align*}
\end{lemma}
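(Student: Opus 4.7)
The plan follows the template of Lemma \ref{3simp}, now applied with the $(2,2)$ character ratio from Lemma \ref{cont}. First I would write
\[T_{(2,2)}(\mu)-T_{(2,2)}(\lambda)=\frac{\left(\frac{\chi^\mu(2,2)}{dim(\mu)}-1\right)a_\lambda-\left(\frac{\chi^\lambda(2,2)}{dim(\lambda)}-1\right)a_\mu}{a_\lambda a_\mu}.\]
As was observed in the proof of Lemma \ref{3simp}, $\sum_i c_\lambda(i)$ is strictly maximized by the trivial partition, so $a_\lambda$ and $a_\mu$ are positive for all non-trivial tableaux under consideration. Hence the sign of the difference equals the sign of the numerator, which is what $h(\lambda)$ should measure.

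The main work is to re-express the numerator in terms of the joint tableau $\lambda|\mu$. Abbreviating $A=\lambda_1-1$, $B=\lambda_k+1-k$, $s=\sum_{i=1}^{n-1}c_{\lambda|\mu}(i)$, and $Q=\sum_{i=1}^{n-1}c_{\lambda|\mu}^2(i)$, I would use the identities
\[\sum_i c_\lambda(i)=s+A,\quad \sum_i c_\mu(i)=s+B,\quad \sum_i c_\lambda^2(i)=Q+A^2,\quad \sum_i c_\mu^2(i)=Q+B^2,\]
together with the explicit formulas $a_\lambda=1-\tfrac{2(s+A)}{n(n-1)}$ and $a_\mu=1-\tfrac{2(s+B)}{n(n-1)}$ and the formula for $\chi^\lambda(2,2)/dim(\lambda)$ from Lemma \ref{cont}, to reduce the numerator (multiplied by $6\binom{n}{4}$) to a polynomial in $A,B,s,Q,n$. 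Writing $P_0 = s^2-3Q+n(n-1)-6\binom{n}{4}$, the numerator splits as $P_0(a_\lambda-a_\mu)+2s(Ba_\lambda-Aa_\mu)-2(B^2a_\lambda-A^2a_\mu)$, and the three identities
\[a_\lambda-a_\mu=\tfrac{2(B-A)}{n(n-1)},\quad Ba_\lambda-Aa_\mu=(B-A)\bigl(1-\tfrac{2s}{n(n-1)}\bigr),\]
\[B^2a_\lambda-A^2a_\mu=(B-A)\Bigl[(A+B)\bigl(1-\tfrac{2s}{n(n-1)}\bigr)-\tfrac{2AB}{n(n-1)}\Bigr]\]
allow one to factor out $(B-A)\cdot\tfrac{2}{n(n-1)}$. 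Since $\lambda_1>\lambda_k$ together with $k\ge 2$ forces $A>B$, this factor is negative, so dividing through flips the inequality once, and the remaining bracket, after substituting back $A+B=\lambda_1+\lambda_k-k$, $AB=(\lambda_1-1)(\lambda_k+1-k)$, and $n(n-1)=2\binom{n}{2}$, should coincide with $h(\lambda)$ exactly.

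The main obstacle is purely algebraic bookkeeping: unlike the $3$-cycle case, the $(2,2)$ character ratio contains the squared sum $\bigl(\sum_i c_\lambda(i)\bigr)^2$, which introduces quadratic cross terms in $A$ and $s$ that are absent from Lemma \ref{3simp}, so the expansion is longer and the cancellations less transparent. The saving grace is that all of the $A,B$-dependence enters only through the three identities above, each of which cleanly carries the common factor $(B-A)$; once that factor is extracted, matching the remaining polynomial to the stated form of $h(\lambda)$ is a direct verification.
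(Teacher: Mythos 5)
Your proposal follows the paper's proof essentially verbatim: form the common-denominator expression, observe $a_\lambda,a_\mu>0$ so that only the numerator's sign matters, expand via Lemma \ref{cont}, substitute $\sum_i c_\lambda^t = \sum_i c_{\lambda|\mu}^t+(\lambda_1-1)^t$ and $\sum_i c_\mu^t=\sum_i c_{\lambda|\mu}^t+(\lambda_k+1-k)^t$, and pull out the common factor involving $A-B=(\lambda_1-1)-(\lambda_k+1-k)>0$. One small bookkeeping slip: after extracting $(B-A)\cdot\tfrac{2}{n(n-1)}$, the remaining bracket is $-h(\lambda)/\binom{n}{2}$, not $h(\lambda)$ itself, so one must also clear the denominator $\binom{n}{2}$ and carry the sign flip through before the match is exact --- precisely the constant of proportionality $6\binom{n}{4}\binom{n}{2}/(A-B)$ recorded in the paper.
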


\begin{proof}
\begin{align}
T_{(2,2)}(\mu) - T_{(2,2)}(\lambda) 
= \frac{\left(1-\frac{\chi^\lambda(2,2)}{dim(\lambda)}\right)a_{\mu}-
\left(1-\frac{\chi^\mu(2,2)}{dim(\mu)}\right)a_{\lambda}}
{a_{\lambda} a_{\mu}}. \label{t222}
\end{align}

As in Lemma \ref{3simp}, the non-negativity of (\ref{t222}) is determined by the numerator.  Using the formulas from
Lemma \ref{cont} and rearranging, the numerator of (\ref{t222}) is equal to
\begin{align*}
&\frac{\left(\sum_ic_{\mu}(i)\right)^2-\left(\sum_ic_{\lambda}(i)\right)^2-3\left(\sum_i[c_{\mu}^2(i)
-c_\lambda^2(i)]\right)}{6\binom{n}{4}} 
+ \frac{\sum_i(c_\lambda(i)-c_{\mu}(i))}{\binom{n}{2}} \\
&+ \frac{\left[\left(\sum_ic_{\lambda}(i)\right)^2-3\sum_ic_\lambda^2(i)+2 \binom{n}{2}\right]\sum_jc_{\mu}(j)}{6\binom{n}{4}\binom{n}{2}} \\
&- \frac{\left[\left(\sum_ic_{\mu}(i)\right)^2-3\sum_ic_{\mu}^2(i)+2 \binom{n}{2}\right]\sum_jc_{\lambda}(j)}{6\binom{n}{4}\binom{n}{2}}.
\end{align*}

Analogous to Lemma \ref{3simp} for $3$-cycles, simplifying the term by
rewriting $\sum_ic_\lambda^t(i)=\sum_ic_{\lambda|\mu}^t(i)+(\lambda_1-1)^t$ and
$\sum_ic_{\mu}^t(i)=\sum_ic_{\lambda|\mu}^t(i)+(\lambda_k+1-k)^t$ for $t=1,2$
yields a term proportional to $h(\lambda)$.  In this case, the constant
of proportionality is $\left(6\binom{n}{4}\binom{n}{2}\right)/((\lambda_1-1)-(\lambda_k+1-k))$, which is
positive and so does not affect the non-negativity.
\end{proof}

Now we can prove the following lemma.

\begin{lemma}\label{22mon}
Let $\lambda$ and $\mu$ be two irreducible representations on the symmetric group.
If $\lambda\succ\mu$, then $T_{(2,2)}(\lambda) \leq T_{(2,2)}(\mu)$.
\end{lemma}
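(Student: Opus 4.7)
The plan is to mirror the proof of Lemma \ref{3mon} almost verbatim. By Lemma \ref{reduc}, it suffices to handle the elementary step where $\mu$ differs from $\lambda$ by moving a single block from the first row to either the last nonempty row or a new row, and by Lemma \ref{22simp} the task then reduces to showing $h(\lambda) \geq 0$ for every admissible $\lambda = (\lambda_1, \ldots, \lambda_k)$ with $\lambda_1 > \lambda_2$ and $\lambda_k < \lambda_{k-1}$, where $\lambda_k = 0$ is allowed provided $\lambda_{k-1} > 0$. I would proceed by induction on $n$; since the conjugacy class $(2,2)$ first occurs at $n = 4$, the base case consists of a direct check on the finite list of admissible tableaux of size four.

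For the inductive step I would partition the admissible tableaux into the same three cases used in the proof of Lemma \ref{3mon}, namely $\lambda_k \neq 0$; $\lambda_k = 0$ with $\lambda_{k-1} \neq 1$; and $\lambda = (\lambda_1, \ldots, \lambda_{k-2}, 1, 0)$. In each case, select an associated partition $\phi$ of $n - 1$ by removing one block in the manner dictated by the case (the same $\phi$ used for $f$). Each $\phi$ remains admissible, so $h(\phi) \geq 0$ by the inductive hypothesis, and it suffices to show $h(\phi) - h(\lambda) \leq 0$. To compute $h(\phi) - h(\lambda)$ explicitly I would apply the identity $\binom{n+1}{j} = \binom{n}{j} + \binom{n}{j-1}$ in combination with the content-sum relations
\[\sum_{i=1}^{n-1} c_{\phi|\psi}^t(i) \;=\; \sum_{i=1}^{n} c_{\lambda|\mu}^t(i) \;-\; (\text{removed content})^t \qquad (t = 1, 2),\]
exactly as in the companion argument for $f$.

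The main obstacle is anticipated to be Case 1. Because $h$ carries the quartic term $6\binom{n}{4}$ and, crucially, the squared sum $\bigl(\sum_i c_{\lambda|\mu}(i)\bigr)^2$, the difference $h(\phi) - h(\lambda)$ is not a polynomial in $(\lambda_1, \lambda_k)$ alone; it retains a residual linear dependence on $\sum_i c_{\lambda|\mu}(i)$ and on $\sum_i c_{\lambda|\mu}^2(i)$. To eliminate this dependence I plan to reuse the geometric lower bound on $\sum_i c_{\lambda|\mu}(i)$ from Case 1 of Lemma \ref{3mon}, in which the contents are split into row 1, row $k$, and the intermediate boxes bounded below by their minimum admissible content $2 - k$, and to derive a matching upper bound on $\sum_i c_{\lambda|\mu}^2(i)$ by bounding each intermediate content by the extremes $\lambda_2 - 1$ and $k - 2$. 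Substituting these bounds into $h(\phi) - h(\lambda)$ should reduce the inequality, as in Lemma \ref{3mon}, to the non-positivity of an explicit polynomial $g(\lambda_1, \lambda_k)$ on the region $\mathcal{D} = \{(\lambda_1, \lambda_k) : \lambda_k \leq n + 1 - \lambda_1,\; 1 \leq \lambda_k \leq \lambda_1 - 2\}$, to be verified by checking the boundary and critical points.

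Cases 2 and 3 should be considerably milder. In both, following the pattern of Lemma \ref{3mon}, the resulting expression for $h(\phi) - h(\lambda)$ should be monotone in $\lambda_1$, so only the extremal value $\lambda_1 = n$ (in Case 2) or $\lambda_1 = n - 1$ (in Case 3) needs to be tested, and the required non-positivity should follow from a direct estimate using $\sum_{i=1}^{n-1} c_{\phi|\psi}(i) \leq \binom{n-1}{2}$ in tandem with the binomial comparisons among $\binom{n}{2}$, $\binom{n}{3}$, and $\binom{n}{4}$.
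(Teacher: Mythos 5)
Your overall plan (reduce via Lemmas \ref{reduc} and \ref{22simp} to showing $h(\lambda)\geq 0$, then induct on $n$ with the same three cases and the same $\phi,\psi$ as in Lemma \ref{3mon}) matches the paper exactly, and Cases 2 and 3 would likely work out along the lines you sketch, though the endgame there involves handling a couple of small subcases (e.g.\ $\lambda_1 < 4$ in Case 3, and the degenerate $\lambda_1=\lambda_{k-1}+1$ in Case 2) rather than a single extremal substitution. The real problem is Case 1, where your anticipated difficulty is based on a sign and cancellation error.

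When you expand $h(\phi)-h(\lambda)$, the $\sum_i c_{\lambda|\mu}^2(i)$ terms cancel \emph{exactly}, because their coefficient in $h$ is the constant $3$, independent of the tableau; only a linear term in $\sum_i c_{\lambda|\mu}(i)$ survives (coming from the cross term of $\bigl(\sum_i c\bigr)^2$). So there is no need for an upper bound on $\sum_i c^2$. More importantly, the coefficient of $\sum_i c_{\lambda|\mu}(i)$ in $h(\phi)-h(\lambda)$ is $n-\lambda_k+k+1 > 0$, which has the \emph{opposite} sign to what occurred for $f$ (where the coefficient was $-1$). Consequently you need an \emph{upper} bound $\sum_i c_{\lambda|\mu}(i)\leq\binom{n-1}{2}$, not the geometric lower bound you propose to recycle from Lemma \ref{3mon}. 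If you plug in a lower bound you will be bounding in the wrong direction and the argument will not close. Once the correct (and much easier) upper bound is used, Case 1 collapses immediately to $(\lambda_1-2)(\lambda_k-k)-\lambda_1(n-1)\leq 0$, with no need for the two-variable domain analysis of $g(\lambda_1,\lambda_k)$ that was required for $f$.
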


\begin{proof}
We will exactly follow the strategy of the proof of Lemma \ref{3mon} with the function $h$ replacing $f$.


\vspace{2.5mm}
\noindent Case 1: $\lambda_k \not = 0$.

For this case, let $\phi=(\lambda_1,\ldots,\lambda_{k-1},\lambda_k-1)$, a partition of $n$, and $\psi$
be equal to the partition $(\lambda_1-1,\ldots,\lambda_{k-1},\lambda_k)$.
Then following the proof of Lemma \ref{3mon},

\begin{align*}
(h(\phi)&-h(\lambda))/2=(\lambda_1-1)(\lambda_k-k+1)+\lambda_k\binom{n}{2}-(k+1)\binom{n}{2} \\
&+(n-\lambda_k+k+1)\sum_{i=1}^nc_{\lambda|\mu}(i)-n(\lambda_1+\lambda_k-k-1)-3\binom{n}{3} \\
&\leq (\lambda_1-1)(\lambda_k-k+1)+\binom{n}{2}\left(\lambda_k-k-1\right)\\
&+(n-\lambda_k+k+1)\binom{n-1}{2}-n(\lambda_1+\lambda_k-k-1)-3\binom{n}{3} \\
&=(\lambda_1-2)(\lambda_k-k)-\lambda_1(n-1)\leq0.
\end{align*}
Here the first inequality follows because $n-\lambda_k+k+1\geq0$ and $\sum_ic_{\lambda|\mu}(i) \leq \binom{n-1}{2}$,
and the final inequality from $\lambda_k-k \leq n-1$ and $\lambda_1 \geq 2$.

\vspace{2.5mm}

\noindent Case 2: $\lambda_k=0$, $\lambda_{k-1}\not=1$. 

In this case, let $\phi=(\lambda_1,\ldots,\lambda_{k-1}-1,0)$ and then let $\psi$ be equal to
the partition $(\lambda_1-1,\ldots,\lambda_{k-1}-1,1)$.  Then we have

\begin{align*}
(h(\phi)&-h(\lambda))/2=(\lambda_1-k)(\lambda_{k-1}-k+1-n)+\binom{n}{2}\left(\lambda_{k-1}-k+1\right) \\
&+(n-\lambda_{k-1}+k-1)\sum_{i=1}^nc_{\lambda|\mu}(i)-(\lambda_{k-1}-k+1)^2-3\binom{n}{3}+n \\
&\leq (\lambda_1-k)(\lambda_{k-1}-k+1-n)+\binom{n}{2}\left(\lambda_{k-1}-k+1\right) \\
&+(n-\lambda_{k-1}+k-1)\binom{n-1}{2}-(\lambda_{k-1}-k+1)^2-3\binom{n}{3}+n \\
&=(\lambda_1-\lambda_{k-1}-2)(\lambda_{k-1}-k+1-n).
\end{align*}
The inequality follows because $n-\lambda_{k-1}+k-1\geq0$ and $\sum_ic_{\lambda|\mu}(i) \leq \binom{n-1}{2}$.
If $\lambda_1-\lambda_{k-1}-2\geq0$, then this case is shown as $\lambda_{k-1}-k+1-n \leq0$.  If not, then
$\lambda_1=\lambda_{k-1}+1$ which implies $\lambda_j=\lambda_{k-1}$ for $j=2 \ldots k-1$ (since $\lambda_1\geq \lambda_2+1$).
In this case, $(k-1)\lambda_{k-1}=n$ and 
\[\sum_ic_{\lambda|\mu}(i)=\sum_{i=1}^{k-1}\sum_{j=1}^{\lambda_{k-1}}j-i
=\frac{n}{2}(\lambda_{k-1}+1-k).\]
Then we have
\begin{align*}
(h(\phi)&-h(\lambda))/2=(\lambda_{k-1}+1-k)(\lambda_{k-1}-k+1-n)+\binom{n}{2}\left(\lambda_{k-1}-k+1\right) \\
&+\frac{n}{2}(\lambda_{k-1}+1-k)(n-\lambda_{k-1}+k-1)-(\lambda_{k-1}-k+1)^2-3\binom{n}{3}+n.
\end{align*}
This is a downward facing parabola in $\lambda_{k-1}$ with roots equal to $n+k-1$ and $n+k-4$, neither of which is a possible value
of $\lambda_{k-1}$ since $k$ must be at least three.

\vspace{2.5mm}

\noindent Case 3: $\lambda=(\lambda_1, \ldots, \lambda_{k-2},1,0)$.

In this case, let $\phi=(\lambda_1,\ldots,\lambda_{k-2},0)$ and then let $\psi$ be equal to
the partition $(\lambda_1-1,\ldots,\lambda_{k-2},1)$.  Then we have

\begin{align}
(h&(\phi)-h(\lambda))/2 \notag\\
&=(n+k-3)\left(\sum_{i=1}^nc_{\lambda|\mu}(i)+k-\lambda_1-\binom{n-1}{2}\right) \notag\\
&+n(4-k) -2\lambda_1+k-(2-k)^2\label{hh2}\\
&\leq (n-1)(4-\lambda_1)+k(2-\lambda_1)\label{hh1}
\end{align}
To see the inequality, notice that $n+k-3\geq 0$ and $\sum_ic_{\lambda|\mu}(i)\leq \binom{n-1}{2}$.

The term (\ref{hh1}) is clearly negative for $\lambda_1\geq4$, and the smaller cases can be individually handled
starting from (\ref{hh2})
(for example $\lambda_1=2$ implies $\sum_ic_{\lambda|\mu}(i)=-\binom{n}{2}$ and $k=n+1$).
\end{proof}

The main result of this section is stated in the following.

\begin{theorem}
If $\lambda \succ \mu$ for nontrivial irreducible representations $\lambda$ and $\mu$, then the error term from
Theorem \ref{gplanch} associated to $\lambda$ is no larger than the error
term associated to $\mu$. In particular, the error term is minimum for $\tau=(n-1,1)$.
\end{theorem}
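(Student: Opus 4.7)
The plan is to reduce the comparison of the bounds in Theorem \ref{gplanch} at $\lambda$ and at $\mu$ to a term-by-term monotonicity argument that builds directly on the lemmas already established in this section. The first step is to observe that since two successive transpositions in $S_n$ can only yield the identity, a single $3$-cycle, or a product of two disjoint transpositions, the probability $p_2(K)$ vanishes unless $K \in \{\mathrm{id},(3),(2,2)\}$, collapsing the sums in Theorem \ref{gplanch} to these three conjugacy classes. The identity contribution is independent of $\lambda$ because $T_\mathrm{id}(\lambda) = 0$, so the entire dependence of the bound on $\lambda$ sits in $T_{(3)}(\lambda)$ and $T_{(2,2)}(\lambda)$.

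Next I isolate the two monotonicity properties needed. The first summand of the bound is a square root of a nonnegative weighted sum of terms of the form $(T_K(\lambda) + 2)^2$, and so is increasing in each $T_K(\lambda)$ as long as $T_K(\lambda) + 2 \geq 0$. The second summand is a fourth root of a nonnegative weighted sum of terms of the form $8 + 6 T_K(\lambda)$ and is therefore unconditionally increasing in each $T_K(\lambda)$. It follows that the theorem will follow from two facts: $(a)$ $T_K(\lambda) \leq T_K(\mu)$ for $K \in \{(3),(2,2)\}$ whenever $\lambda \succ \mu$, and $(b)$ $T_K(\lambda) \geq -2$ for every nontrivial irreducible $\lambda$ and the same $K$.

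Fact $(a)$ is precisely Lemmas \ref{3mon} and \ref{22mon}, which constitute the principal technical obstacle of the proof and have already been established. Granting $(a)$, fact $(b)$ reduces to the verification that $T_K(\tau) \geq -2$: indeed, because $\tau \succ \lambda$ for every nontrivial $\lambda$, the monotonicity from $(a)$ gives $T_K(\lambda) \geq T_K(\tau)$. The final check $T_K(\tau) \geq -2$ for $K \in \{(3),(2,2)\}$ is a direct computation using Lemma \ref{cont} with the contents $0,1,\dots,n-2,-1$ of $\tau = (n-1,1)$; one finds $a_\tau = 2/(n-1)$, together with explicit forms for $\chi^\tau(3)/\dim(\tau)$ and $\chi^\tau(2,2)/\dim(\tau)$, from which the inequalities follow by routine manipulation. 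Combining $(a)$ and $(b)$ shows the bound at $\lambda$ does not exceed the bound at $\mu$, and the ``in particular'' assertion is immediate from $\tau \succ \lambda$ for all nontrivial irreducible representations.
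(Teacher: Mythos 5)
Your proposal matches the paper's argument essentially step for step: reduce to the three conjugacy classes with nonzero $p_2$, note $T_{\mathrm{id}}$ is constant, invoke Lemmas \ref{3mon} and \ref{22mon} for the monotonicity of $T_{(3)}$ and $T_{(2,2)}$ under $\succ$, and close by checking $T_K(\tau)\geq -2$ (the paper records the explicit values $T_{(3)}(\tau)=-3/2$, $T_{(2,2)}(\tau)=-2$). Your derivation of the needed lower bound from the monotonicity together with the evaluation at $\tau$ is a clean way to organize the same final step.
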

\begin{proof}
By the previous lemmas and remarks, the theorem will be proved if $T_{(3)}(\tau)$ and $T_{(2,2)}(\tau)$ are no less than negative two 
(for $K$ equal to the identity or transposition conjugacy class, $T_K$ is constant).
Using the formulas from Lemma \ref{cont}, or the fact \cite{sa01} that  for
any permutation $g$,
$\chi^\tau(g)$ is equal to the number of fixed points of $g$ minus one and $dim(\tau)=n-1$, we have
\begin{align*}
&T_{(3)}(\tau) =-3/2,  \\
&T_{(2,2)}(\tau) = -2.
\end{align*}  
\end{proof}



\section{Eigenvalue Characterization}\label{lamsec}

Heuristically, increasing the step size of a Markov chain has the effect of making the chain become ``random'' faster. 
This suggests that for an ergodic chain, the step size may be related to the rate of convergence to stationarity.
From basic facts about reversible Markov chains on a finite state space \cite{bre99}, all the chains previously defined are ergodic if and only if 
they are irreducible and aperiodic.  In this case, the rate of convergence to stationarity is determined by the eigenvalues, where the eigenvalues
with the largest moduli make the largest asymptotic contributions. Because of this relationship, we will express the error terms from the past sections 
in terms of the eigenvalues of the Markov chains used to induce the exchangeable pairs.
 
We first examine the chains from Section \ref{krawsec} (and hence also Section \ref{binsec} as previously mentioned).
For a fixed value of $t$, the eigenvalues for $L_t(i,j)$ have been computed in \cite{ful05},
but we include the proof since it is illustrative.  Recall the definitions and notation from Section \ref{krawsec}.

\begin{lemma}\cite{ful05}
For fixed $t \in \{0,1, \ldots ,n\}$, the eigenvalues of $L_t(i,j)$ are $\frac{K_s(t)}{v_s}$ for $s=0\ldots n$.  
\end{lemma}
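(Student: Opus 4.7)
The plan is to exhibit explicit eigenvectors of $L_t$ and read off the eigenvalues. The natural candidates are the Krawtchouk polynomials themselves: for each $s \in \{0,1,\ldots,n\}$, I would guess that the vector $\phi^s$ with $\phi^s(i) = K_s(i)$ is an eigenvector of $L_t$ with eigenvalue $K_s(t)/v_s$. The motivation is that the Krawtchouk polynomials form an orthogonal family with respect to the Plancherel measure (Lemma \ref{kraw2}), and the defining formula (\ref{LTdef}) for $L_t$ is a spectral expansion in this basis, so the Krawtchouk polynomials should diagonalize $L_t$.

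First I would fix $s$ and compute $\sum_{j=0}^n L_t(i,j) K_s(j)$ directly by substituting definition (\ref{LTdef}). This gives a triple sum of the form
\[
\sum_{j=0}^n \frac{v_j}{|X|}\sum_{r=0}^n \frac{K_r(i)K_r(t)K_r(j)}{v_r^2} K_s(j).
\]
Next I would interchange the order of summation, pulling the $j$-sum to the inside, which leaves
\[
\sum_{r=0}^n \frac{K_r(i)K_r(t)}{|X|\,v_r^2} \sum_{j=0}^n v_j K_r(j) K_s(j).
\]
Now the inner $j$-sum is exactly the left-hand side of the orthogonality relation in Lemma \ref{kraw2}, so it equals $|X|\,v_r\,\delta_{r,s}$. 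Applying this collapses the $r$-sum to the single term $r=s$, yielding $\frac{K_s(i)K_s(t)}{v_s} = \frac{K_s(t)}{v_s}K_s(i)$.

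This shows $\phi^s$ is an eigenvector of $L_t$ with eigenvalue $K_s(t)/v_s$. To conclude that these are all the eigenvalues, I would invoke Lemma \ref{kraw2} once more: it asserts that the vectors $\phi^0, \phi^1, \ldots, \phi^n$ are mutually orthogonal with respect to the weighted inner product given by $v_j$, and in particular linearly independent. Since there are $n+1$ of them in an $(n+1)$-dimensional space, they form a basis, so the full spectrum of $L_t$ is exactly $\{K_s(t)/v_s : s=0,\ldots,n\}$. There is no real obstacle here; the only thing to be careful about is the bookkeeping of indices when interchanging the two summations, and to remember that Lemma \ref{kraw2} requires the weights $v_j$ in order to produce the orthogonality relation that collapses the inner sum.
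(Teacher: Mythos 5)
Your proof is correct and follows essentially the same route as the paper: substitute the definition of $L_t$, interchange sums, and apply the orthogonality relation of Lemma \ref{kraw2} to collapse the inner sum, yielding $M\phi^s = \frac{K_s(t)}{v_s}\phi^s$. The only addition is your explicit remark that the $n+1$ orthogonal eigenvectors span the space and hence account for the full spectrum, which the paper leaves implicit.
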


\begin{proof}
By definitions and Lemma \ref{kraw2},
\begin{align*}
\sum_{j=0}^nL_t(i,j)K_s(j) &= \frac{1}{q^n}\sum_{r=0}^n\frac{K_r(i)K_r(t)}{v_r^2}\sum_{j=0}^nv_jK_s(j)K_r(j) \\
				  &= \left(\frac{K_s(t)}{v_s}\right)K_s(i).	
\end{align*}
In other words, if $M$ is the transition matrix of $L_t(i,j)$ and $\textbf{v}$ is the vector with coordinate $j$ equal to $K_s(j)$,
then $M\textbf{v}=\frac{K_s(t)}{v_s}\textbf{v}$.
\end{proof}

A proof of the next lemma follows along the lines of the proof of Lemma \ref{lamham}. 

\begin{lemma} \label{Llam}
The eigenvalues of $L_T(i,j)$ as defined in Section \ref{krawsec} are for $s=0,\ldots,n$,
\[\lambda_s=\sum_{t=0}^nb_t\left(\frac{K_s(t)}{v_s}\right).\]
\end{lemma}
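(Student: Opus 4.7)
The plan is to exploit the fact that the chain $L_T$ is itself a mixture of the fixed-$t$ chains $L_t$ and that all of the $L_t$'s share a common basis of eigenvectors. Since the eigenvalue of such a mixture is just the corresponding weighted average of the eigenvalues, the result will fall out of the previous lemma almost immediately.

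More precisely, the first step is to observe that by the definition of $L_T$ with $T$ drawn from $\mathbb{P}(T=t)=b_t$, we have
\[
L_T(i,j) \;=\; \sum_{t=0}^{n} b_t\, L_t(i,j),
\]
so the transition matrix of $L_T$ is the convex combination $\sum_t b_t M_t$ of the fixed-$t$ transition matrices $M_t$. Next, by the previous lemma, for each fixed $t$ and each $s\in\{0,\ldots,n\}$ the vector $\mathbf{v}^{(s)}$ whose $j$-th coordinate is $K_s(j)$ satisfies $M_t \mathbf{v}^{(s)} = (K_s(t)/v_s)\mathbf{v}^{(s)}$. Because this is true for every $t$, linearity immediately gives
\[
\Big(\sum_{t=0}^n b_t M_t\Big) \mathbf{v}^{(s)}
= \sum_{t=0}^{n} b_t \, \frac{K_s(t)}{v_s} \, \mathbf{v}^{(s)}
= \lambda_s\, \mathbf{v}^{(s)},
\]
with $\lambda_s$ as in the statement. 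Since the Krawtchouk polynomials provide a basis (by Lemma \ref{kraw2}, for instance, which gives an orthogonality relation guaranteeing linear independence), the $\lambda_s$ exhaust all the eigenvalues of $L_T$, completing the argument.

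There is no real obstacle here: the only thing that needs to be checked is that the hint from the proof of the previous lemma carries through without change, which it does because the definition (\ref{LTdef}) depends linearly on the parameter $t$ only through the factor $K_r(t)$, so averaging over $t$ with weights $b_t$ commutes with the computation that identified the $K_s$-vectors as eigenvectors. Thus the same chain of equalities used for fixed $t$ works verbatim after replacing $K_r(t)$ by $\sum_t b_t K_r(t)$.
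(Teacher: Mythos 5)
Your proof is correct and follows essentially the same route as the paper: the paper remarks that the lemma "follows along the lines of the proof of Lemma \ref{lamham}," which itself sums over $t$ and invokes the fixed-$t$ eigenvector computation, exactly matching your observation that $L_T=\sum_t b_t L_t$ combined with linearity and the common eigenbasis of Krawtchouk vectors.
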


Recall from the proof of Theorem \ref{krawopt} the bound on the error depended only on the following term for $s=0,1,2$, 
\begin{equation}
\frac{\left(\sum_{t=0}^nb_t\frac{K_s(t)}{v_s}\right)-1}{1-\left(\sum_{t=0}^nb_t\frac{K_1(t)}{v_1}\right)}.\label{krr3}
\end{equation}
By Lemma \ref{Llam}, (\ref{krr3}) can be rewritten as $(\lambda_s-1)/(1-\lambda_1)$ for $s=0,1,2$.  For $s=0$ and $s=1$, (\ref{krr3})
does not depend on values $b_t$, so that we have the following theorem.

\begin{theorem}
The size of the error term given by Stein's method via the family of chains from Section \ref{krawsec} is a monotone increasing
function of
\[\frac{\lambda_2-1}{1-\lambda_1},\] 
where $\lambda_1$ and $\lambda_2$ are defined as in Lemma \ref{Llam}.
\end{theorem}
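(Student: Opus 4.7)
The plan is to show that once the reductions from the proof of Theorem \ref{krawopt} are in place, the error bound in Theorem \ref{krawerr} depends on the chain only through a single scalar, and that this scalar is precisely $(\lambda_2-1)/(1-\lambda_1)$ in the notation of Lemma \ref{Llam}.

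First I would invoke the rewrite (\ref{krawt1})--(\ref{krawt2}) used in the proof of Theorem \ref{krawopt}, which expresses the bound entirely in terms of the quantities
\[ R_s := \frac{\left(\sum_{t=0}^n b_t K_s(t)/v_s\right)-1}{a}, \qquad s = 0, 1, 2, \]
all indices $s \geq 3$ being killed by the vanishing of $p_2(j)$. Lemma \ref{Llam} immediately identifies each $R_s$ with $(\lambda_s-1)/(1-\lambda_1)$. Next I would dispose of $R_0$ and $R_1$: the identity $K_0(t) = v_0 = 1$ gives $R_0 \equiv 0$, and by the definition $a = 1-\lambda_1$ we have $R_1 \equiv -1$. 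Both are constants independent of the chain. Consequently the only chain-dependent piece of the bound is $R_2 = (\lambda_2-1)/(1-\lambda_1)$, and the contributions from $s=0$ and $s=1$ collapse into additive constants.

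It then remains to check monotonicity in $R_2$. Substituting back, the argument of the square root in (\ref{krawt1}) contains the chain-dependent term $p_2(2)^2/v_2 \cdot (R_2+2)^2$ plus a constant from $j=1$, while the argument of the fourth root in (\ref{krawt2}) contains $p_2(2)^2/v_2 \cdot (8+6R_2)$ plus positive constants from $j=0, 1$. The linear term $8+6R_2$ is manifestly monotone increasing, and the quadratic $(R_2+2)^2$ is monotone increasing on the range $R_2 \geq -2$. Composing with the monotone maps $\sqrt{\cdot}$ and $(\cdot)^{1/4}$ on nonnegative arguments preserves this monotonicity, giving the theorem.

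The main (mild) subtlety is justifying $R_2 \geq -2$, which is required to keep $(R_2+2)^2$ in its monotone regime; but this lower bound was already secured in the analysis of (\ref{krawt3}) during the proof of Theorem \ref{krawopt}, where the corresponding quantity was written as a nonnegative multiple of $\sum_{t \geq 2} b_t \, t(t-1) / \sum_{t \geq 1} b_t \, t$. Beyond that, the argument is a bookkeeping step translating the hypotheses of Theorem \ref{krawopt} into spectral language rather than any new computation.
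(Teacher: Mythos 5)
Your proof is correct and follows essentially the same route the paper takes: it reduces to the quantities $R_s$ from the proof of Theorem \ref{krawopt}, observes that $R_0$ and $R_1$ are chain-independent constants, identifies $R_2$ with $(\lambda_2-1)/(1-\lambda_1)$ via Lemma \ref{Llam}, and uses the lower bound $R_2\geq -2$ from the analysis of (\ref{krawt3}) to conclude monotonicity of both parenthesized expressions. The paper states the theorem immediately after this discussion without a formal proof environment; your writeup just makes the monotonicity step explicit.
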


A natural problem that arises is to ascertain 
under what setting the eigenvalues that determine the error term will be the eigenvalues
with largest moduli (not including $\lambda_0=1$),
as these are the eigenvalues with the largest contribution to the rate of convergence.  It follows immediately from
definitions that in the case  $b_0+b_1=1$ where the error term is minimum, $\lambda_s=1-\frac{b_1qs}{n(q-1)}$,
so that the ordering of the eigenvalues
corresponds to the subscript notation. In this case the eigenvalues that affect
the error term will be the largest in moduli if and only if $b_1\leq\frac{2n(q-1)}{q(n+2)}$.

For the chains $L_{\tau}$ from Section \ref{planchsec}, the eigenvalues have already been computed in \cite{ful05} using orthogonality
relations.  Recall the definitions and notation from Section \ref{planchsec}.

\begin{lemma} \label{Ltau}\cite{ful05}
The eigenvalues of $L_{\tau}$ as defined in section \ref{planchsec} are for conjugacy classes $C$ of the group $G$,
\[\lambda_C=\frac{\chi^{\tau}(C)}{dim{\tau}}.\]
\end{lemma}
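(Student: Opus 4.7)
The plan is to diagonalize $L_\tau$ explicitly using the character table of $G$, in direct analogy with how the Krawtchouk transform diagonalized the chains $L_t$ in Section~\ref{krawsec}. Motivated by that analogy, I would guess that for each conjugacy class $C$ of $G$, the vector $v_C \in \mathbb{C}^{Irr(G)}$ whose $\rho$-coordinate is the character ratio $v_C(\rho)=\chi^\rho(C)/dim(\rho)$ is an eigenvector of $L_\tau$. The claim of the lemma is then that the eigenvalue associated to $v_C$ is $\chi^\tau(C)/dim(\tau)$.

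To verify the guess I would substitute into $(L_\tau v_C)(\lambda)=\sum_\rho L_\tau(\lambda,\rho)v_C(\rho)$ using the definition
\[
L_\tau(\lambda,\rho)=\frac{dim(\rho)}{dim(\lambda)dim(\tau)}\frac{1}{|G|}\sum_{g\in G}\chi^\lambda(g)\chi^\tau(g)\overline{\chi^\rho(g)}.
\]
The factor $dim(\rho)$ in the numerator of $L_\tau(\lambda,\rho)$ cancels against the $dim(\rho)$ in the denominator of $v_C(\rho)$. Swapping the order of summation, the inner sum over $\rho$ becomes $\sum_{\rho\in Irr(G)}\overline{\chi^\rho(g)}\chi^\rho(C)$, which by the column (second) orthogonality relation for characters equals $|G|/|C|$ if $g\in C$ and $0$ otherwise. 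What remains is a sum over the $|C|$ elements of the class $C$; since $\chi^\lambda$ and $\chi^\tau$ are class functions, this collapses to $|C|\chi^\lambda(C)\chi^\tau(C)$. Collecting the constants, the final value is
\[
\frac{\chi^\lambda(C)\chi^\tau(C)}{dim(\lambda)dim(\tau)}=\frac{\chi^\tau(C)}{dim(\tau)}\,v_C(\lambda),
\]
which is exactly the eigenvalue equation.

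Finally, to conclude that these are all of the eigenvalues of $L_\tau$ I would observe that the number of conjugacy classes of $G$ equals $|Irr(G)|$, so the vectors $\{v_C\}$ are as many as the dimension of the ambient space. The matrix with entries $v_C(\rho)=\chi^\rho(C)/dim(\rho)$ is the character table with its rows scaled by nonzero constants, hence invertible by the (first) orthogonality relation of characters. Thus the $v_C$ form a basis and we have accounted for every eigenvalue of $L_\tau$, proving the lemma.

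I do not expect any real obstacle in this argument; the main care is bookkeeping. One has to keep $\lambda$ (the row index, held fixed) cleanly separated from $\rho$ (the summation index), and apply the correct version of the character orthogonality (column, rather than row) to collapse the $\rho$-sum. The parallel with the Krawtchouk calculation in Section~\ref{krawsec}, and with the proof of Lemma~\ref{krawlam}, makes the roles of each ingredient transparent.
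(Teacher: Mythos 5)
Your proof is correct and follows exactly the route the paper attributes to \cite{ful05}: you verify directly that the character-ratio vectors $v_C(\rho)=\chi^\rho(C)/\dim(\rho)$ are eigenvectors via the second (column) orthogonality relation, and conclude these exhaust the spectrum because the scaled character table is invertible. This is also the precise analogue of the Krawtchouk-polynomial eigenvalue computation the paper carries out for $L_t(i,j)$, so there is no real divergence of method to discuss.
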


Also, from Theorem \ref{gplanch} and the remarks following it, the bound on the error term depends only on the following
term for the conjugacy classes $K=(id), (2), (3)$, and $(2,2)$, 
\begin{equation}
\frac{\frac{\chi^{\tau}(K)}{dim{\tau}}-1}{1-\frac{\chi^{\tau}(2)}{dim{\tau}}}.\label{pll3}
\end{equation}

By Lemma \ref{Ltau} (\ref{pll3}) can be rewritten as $(\lambda_K-1)/(1-\lambda_{(2)})$ for $K=(id), (2), (3)$, and $(2,2)$.  
For $K=(id)$ and $K=(2)$, (\ref{pll3})
does not depend on the irreducible representation $\tau$ used to generate $L_{\tau}$, so that we have the following theorem.

\begin{theorem}
The size of the error term given by Stein's method via the family of chains from Section \ref{planchsec} is a monotone increasing
function of
\[\frac{\lambda_K-1}{1-\lambda_{(2)}},\] 
where $\lambda_K$ is defined as in Lemma \ref{Ltau} and $K=(3)$ or $K=(2)(2)$.
\end{theorem}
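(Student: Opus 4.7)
The plan is to directly recast the error bound of Theorem \ref{gplanch} in terms of the eigenvalues supplied by Lemma \ref{Ltau}, isolate the $\tau$-dependence, and then use the monotonicity properties of the elementary functions that appear.

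First, I would apply Lemma \ref{Ltau} to write $\chi^\tau(K)/\dim(\tau) = \lambda_K$. Since $C$ is the conjugacy class of transpositions, $a_\tau = 1 - \chi^\tau(C)/\dim(\tau) = 1 - \lambda_{(2)}$. Consequently the basic building block $T_K(\tau) = (\chi^\tau(K)/\dim(\tau) - 1)/a_\tau$ that runs through the bound of Theorem \ref{gplanch} becomes exactly $(\lambda_K - 1)/(1 - \lambda_{(2)})$.

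Next, I would identify which conjugacy classes actually contribute. Because $p_2(K)$ is the two-step transition probability of the transposition walk starting at the identity, $p_2(K) = 0$ unless $K \in \{\mathrm{id}, (2), (3), (2,2)\}$. For $K = \mathrm{id}$ we have $\lambda_{\mathrm{id}} = 1$, giving ratio $0$; for $K = (2)$ the ratio is $-1$ identically. Neither depends on $\tau$, so the $\tau$-dependence of the error bound is entirely through the two ratios for $K = (3)$ and $K = (2,2)$.

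Finally, I would verify monotonicity summand by summand in the bound of Theorem \ref{gplanch}. The second summand contains positive linear combinations of $8 + 6\,(\lambda_K - 1)/(1 - \lambda_{(2)})$, which are strictly increasing in each ratio. The first summand contains the square $\bigl((\lambda_K - 1)/(1-\lambda_{(2)}) + 2\bigr)^2$, and this is monotone increasing precisely when the ratio is at least $-2$. This $\ge -2$ bound is exactly what was verified for $K = (3)$ in Lemma \ref{3mon} (together with the explicit evaluation $T_{(3)}(\tau) = -3/2$) and for $K = (2,2)$ in Lemma \ref{22mon} (together with $T_{(2,2)}(\tau) = -2$). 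Since both summands are then monotone increasing in each of the two ratios, so is the whole bound, which proves the theorem. Because all the genuine work was already done in Section \ref{planchsec}, I expect no real obstacle; the argument is essentially a translation into eigenvalue language.
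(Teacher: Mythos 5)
Your proof is correct and matches the paper's intent: the paper presents this theorem as an immediate consequence of the earlier remarks (Lemma \ref{Ltau}, the support of $p_2$, and the $T_K \ge -2$ analysis from Section \ref{planchsec}) without a separate proof, and you have correctly supplied the missing details along exactly those lines. One small point of precision: Lemmas \ref{3mon} and \ref{22mon} by themselves establish only that $T_K$ decreases along $\succ$; the lower bound $T_K(\lambda) \ge -2$ is obtained by combining that monotonicity with the fact that $\tau=(n-1,1)$ is the $\succ$-maximum and the explicit evaluations $T_{(3)}(\tau)=-3/2$, $T_{(2,2)}(\tau)=-2$ — which you do in fact cite, so the logic is sound.
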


Once again it is natural to ask for a given irreducible representation $\tau$, whether the three eigenvalues that affect the error term
are those with the largest moduli (not including $\lambda_{(id)}=1$).
For $\tau=(n-1,1)$, the representation where the error term is minimum, it is well known \cite{sa01} that 
\[\frac{\chi^{\tau}(K)}{dim{\tau}} = \frac{F_K-1}{n-1},\] 
where $F_K$ is defined to be the number of fixed points of $K$.  In this case, the eigenvalues that affect the error term are the eigenvalues with
the second, third, and fourth largest moduli.





\section{Poisson Approximation}\label{poisintro}

For the final two sections we change focus from approximation by the normal distribution to approximation by the Poisson distribution.
A metric routinely used for integer supported random variables $X$ and $Y$ is the total variation distance defined by
\[\norm{X-Y}_{TV}=\frac{1}{2}\left|\sum_{k\in\mathbb{Z}}\left(\mathbb{P}(X=k)-\mathbb{P}(Y=k)\right)\right|.\]
We will examine how the step size of the Markov chain that induces an exchangeable pair affects the error term
in the following theorem.

\begin{theorem}
\cite{cdm05}
\label{cdm05}
Let $W=\sum_iX_i$, a sum of random variables, such that $\mathbb{E}(W)=\lambda$. 
Let $(W,W')$ an exchangeable pair, $c$ any constant, and
$Poi_\lambda$ denote the Poisson distribution with mean $\lambda$.  Then for $C_\lambda$ a constant only
depending on $\lambda$,
\begin{align}
&\norm{W-Poi_\lambda}_{TV}   \notag \\
&   \leq C_\lambda\left[\mathbb{E} \big| \lambda -c\,\mathbb{P}(W'=W+1|\{X_i\}) \big| 
+\mathbb{E} \big| W -c\,\mathbb{P}(W'=W-1|\{X_i\}) \big| \right]. \label{cdme}
\end{align}
\end{theorem}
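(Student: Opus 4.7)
The plan is to combine the Poisson Stein equation with an exchangeability identity, in much the same spirit as the normal approximation arguments of Sections \ref{binsec}--\ref{planchsec}. Recall that for bounded $g:\mathbb{Z}_{\geq0}\to\mathbb{R}$ the Poisson Stein equation reads $\lambda f(k+1)-kf(k)=g(k)-\mathbb{E}g(Z)$ with $Z\sim Poi_\lambda$, and when $g=\mathbf{1}_A$ the solution $f=f_A$ satisfies a uniform bound $\|f_A\|_\infty\leq C_\lambda$ depending only on $\lambda$ (see, for example, \cite{bhj92}). Since $\norm{W-Poi_\lambda}_{TV}=\sup_A|\mathbb{E}g(W)-\mathbb{E}g(Z)|$, the task reduces to bounding $|\mathbb{E}[\lambda f(W+1)-Wf(W)]|$ uniformly in the choice of $A$.

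The key move is to extract the two conditional probabilities appearing in \eqref{cdme} using exchangeability. I would apply $\mathbb{E}h(W,W')=\mathbb{E}h(W',W)$ to the auxiliary function $h(x,y)=f(x+1)\mathbf{1}\{y=x+1\}$; noting that on the event $\{W=W'+1\}$ one has $f(W'+1)=f(W)$, this produces the identity
\[\mathbb{E}\!\left[f(W+1)\mathbf{1}\{W'=W+1\}\right]=\mathbb{E}\!\left[f(W)\mathbf{1}\{W'=W-1\}\right].\]
Since $W$ is $\{X_i\}$-measurable, the tower property rewrites each side as $\mathbb{E}[f(W+1)\,\mathbb{P}(W'=W+1\mid\{X_i\})]$ and $\mathbb{E}[f(W)\,\mathbb{P}(W'=W-1\mid\{X_i\})]$, respectively. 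Multiplying both by the free constant $c$, these two quantities remain equal, and can be added and subtracted inside $\mathbb{E}[\lambda f(W+1)-Wf(W)]$ to telescope into
\begin{align*}
\mathbb{E}[\lambda f(W+1)-Wf(W)]
&=\mathbb{E}\!\left[\bigl(\lambda-c\,\mathbb{P}(W'=W+1\mid\{X_i\})\bigr)f(W+1)\right]\\
&\quad-\mathbb{E}\!\left[\bigl(W-c\,\mathbb{P}(W'=W-1\mid\{X_i\})\bigr)f(W)\right].
\end{align*}

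To conclude, I would take absolute values, bound $|f(W+1)|,|f(W)|\leq C_\lambda$, and take the supremum over $A\subseteq\mathbb{Z}_{\geq0}$ to recover \eqref{cdme}. The hard part is really just locating the auxiliary function $h$ whose exchangeability identity produces precisely the two conditional probabilities on the right-hand side of \eqref{cdme}; everything after the cancellation is routine Stein machinery, and the bound $\|f_A\|_\infty\leq C_\lambda$ is completely standard. A subtle point worth flagging is that the \emph{same} constant $c$ must appear in front of both $\mathbb{P}(W'=W+1\mid\{X_i\})$ and $\mathbb{P}(W'=W-1\mid\{X_i\})$ for the exchangeability cancellation to close up, which is exactly why $c$ appears as a single free parameter in the statement rather than as two separately optimizable constants.
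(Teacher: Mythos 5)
Your proof is correct and reconstructs exactly the argument of Chatterjee, Diaconis, and Meckes \cite{cdm05}, which the paper cites without reproducing. The identity you extract from the auxiliary function $h(x,y)=f(x+1)\mathbf{1}\{y=x+1\}$ via $\mathbb{E}h(W,W')=\mathbb{E}h(W',W)$ is the same one obtained there from the antisymmetric function $F(x,y)=h(x,y)-h(y,x)$ together with $\mathbb{E}F(W,W')=0$; the tower-property step (using that $W$ is $\{X_i\}$-measurable), the telescoping insertion of $\pm c$ times the two conditional-probability terms, and the final bound $\|f_A\|_\infty\leq C_\lambda$ are all precisely the standard Poisson Stein machinery, so nothing is missing.
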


\begin{remarks}
\mbox{}
\begin{enumerate}
\item It has been shown \cite{roe08} that a result similar to Theorem \ref{cdm05} but with additional error terms
still holds assuming only that $W$ and $W'$ are equally distributed. 

\item Ideally, $c$ should be chosen so that
we have the approximate equalities 
\begin{align}
\mathbb{P}(W'=W+1|\{X_i\})&\approx\frac{\lambda}{c}, \label{apin1}\\
\mathbb{P}(W'=W-1|\{X_i\})&\approx\frac{W}{c}. \label{apin2}
\end{align}
It is shown in \cite{cdm05} that intuitively the existence of
such a constant is likely, a heuristic that is reinforced in the examples presented there.
In fact, if $(W'-W) \in \{-1,0,1\}$ and $\mathbb{E}(W'-W|W)=-a(W-\mathbb{E}(W))$,
it is easy to see that for the choice of $c=1/a$ we have the same error in 
the approximate equalities (\ref{apin1}) and (\ref{apin2}).  This is in general a useful guide for the choice of the constant $c$ (for more on
this line of thought see \cite{roe07}).
\end{enumerate}
\end{remarks}

One of the main technical details in analyzing (\ref{cdme}) for different exchangeable pairs, is the choice of the constant $c$. 
It would be preferable to have a systematic method of choosing this constant based on the exchangeable pair so that the results here
are not contrived.  Ideally, we would choose the constant $c$ to minimize the error terms from Theorem \ref{cdm05}, or more feasibly, their
Cauchy-Schwarz bound (choose $c$ to make the expectation of the terms in the absolute value signs zero).  However, in the examples presented here,
we will choose the constant $c$ to yield the best possible bound under the constraint that the
terms in the absolute value signs are positive.  Admittedly, part of the reason for this restriction is technical convenience, but
in practice choosing the constant in this way is typical (see the examples of \cite{cdm05}).  In the next section we will compare the error terms
using both of these strategies in a small example.

Both of the examples presented here are sums of i.i.d. random variables (Bernoulli and geometric).  Even the simplest introduction
of dependence (e.g. the hypergeometric distribution) yield results that make the type of analysis in this paper difficult. Because we are in the
setting of independence, we can use the same family of Markov chains for both examples. Given a vector $(X_1, \ldots, X_n)$ of
non-negative integer valued i.i.d. random variables,
the next step in the chain follows the rule of choosing
$k$ coordinates uniformly at random and replacing them with $k$ new i.i.d. random variables (with the same distribution as the original).
It is not hard to see that this chain is reversible with respect to vectors of i.i.d. random variables and hence generates an exchangeable
pair. Extending this exchangeable pair to the sum of the components of the vector allows for the application of Theorem \ref{cdm05}.



Finally, notice that under this chain it is not clear how modifying the number of coordinates chosen to be selected (i.e. varying $k$) will
affect the error term.







\section{Binomial Distribution}\label{binpoisec}

It is well known that the binomial distribution with parameters $n$ and $p$ converges to a Poisson distribution with
mean $\lambda$ as $n$ tends to infinity if $np$ tends to $\lambda$.  For simplicity, in this section we consider the case where
$p=1/n$, so that $\lambda=1$.  In this case we will show that among the exchangeable pairs associated
with the family of Markov chains described in Section \ref{poisintro} the term from Theorem \ref{cdm05} is minimized when $k=1$.
First we will prove some lemmas that will be used to
compute the error term from the theorem.

\begin{lemma}\label{pdef}
Let $\mathbb{P}_k$ denote probability under the chain that substitutes $k$ coordinates as described in Section \ref{poisintro}. Then
\begin{align*}
\mathbb{P}_k(W'=W+1|\{X_i\})&=\sum_{i=0}^{k-1}\binom{k}{i+1}\frac{(n-1)^{k-i-1}}{n^k}\frac{\binom{W}{i} \binom{n-W}{k-i}}{\binom{n}{k}},\\
\mathbb{P}_k(W'=W-1|\{X_i\})&=\sum_{i=1}^{k}\binom{k}{i-1}\frac{(n-1)^{k-i+1}}{n^k}\frac{\binom{W}{i} \binom{n-W}{k-i}}{\binom{n}{k}}.
\end{align*}
\end{lemma}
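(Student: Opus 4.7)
The plan is to condition on how the $k$ resampled coordinates split between $0$'s and $1$'s under the original configuration and then treat the $k$ fresh draws as an independent $\mathrm{Bin}(k,1/n)$ count of $1$'s. Since both pieces of information are independent of each other given $\{X_\ell\}$, the conditional probabilities of $W'=W\pm 1$ factor as a product that can be summed over the possible split.

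First I would fix $(X_1,\ldots,X_n)$ with $W=\sum_\ell X_\ell$ and let $i$ denote the number of the $k$ chosen coordinates whose current value is $1$. Since the $k$ positions are selected uniformly at random among the $n$ coordinates, conditional on $\{X_\ell\}$ the count $i$ has the hypergeometric mass function $\binom{W}{i}\binom{n-W}{k-i}\big/\binom{n}{k}$. Next, let $J$ be the number of $1$'s among the $k$ i.i.d. $\mathrm{Bern}(1/n)$ replacements. By construction $J$ is independent of the chosen positions and of $\{X_\ell\}$, so $\mathbb{P}(J=j)=\binom{k}{j}(n-1)^{k-j}/n^k$. Because $W'-W=J-i$, the event $\{W'=W+1\}$ coincides with $\{J=i+1\}$ and $\{W'=W-1\}$ with $\{J=i-1\}$.

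Finally, summing the product of these two conditionally independent probabilities over the valid values of $i$ gives the claimed identities: for $W'=W+1$ we need $i+1\leq k$, hence $0\leq i\leq k-1$; for $W'=W-1$ we need $i-1\geq 0$, hence $1\leq i\leq k$. The usual convention that $\binom{m}{r}=0$ whenever $r>m$ or $r<0$ automatically enforces the physical constraints $i\leq W$ and $k-i\leq n-W$, so no extra case analysis is required.

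There is essentially no obstacle here beyond bookkeeping. The only point demanding care is the indexing of the summation so that $J$ and $i$ never fall outside $\{0,\ldots,k\}$; once the ranges are handled correctly, the two formulas drop out immediately from the factorization described above.
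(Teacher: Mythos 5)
Your proof is correct and follows essentially the same route as the paper: the paper also conditions on the number $Y=i$ of ones among the $k$ resampled coordinates (hypergeometric given $W$), notes that the fresh draws contribute a $\mathrm{Bin}(k,1/n)$ number of ones, and sums the product over $i$. Your explicit identification $W'-W=J-i$ and the remark about binomial-coefficient conventions handling the boundary cases are just a slightly more detailed presentation of the same argument.
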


\begin{proof}
Let the random variable $Y$ be the number of ones in the $k$ coordinates chosen. Then $\mathbb{P}_k(W'=W+1|Y=i)$ is the probability of
$i+1$ ones in the binomial distribution with parameters $k$ and $p=1/n$, which implies
\begin{align*}
\mathbb{P}_k(W'=W+1|\{X_i\})&=\sum_{i=0}^{k-1}\mathbb{P}_k(W'=W+1|Y=i)\mathbb{P}_k(Y=i)\\
					&=\sum_{i=0}^{k-1}\binom{k}{i+1}\frac{(n-1)^{k-i-1}}{n^k}\frac{\binom{W}{i} \binom{n-W}{k-i}}{\binom{n}{k}}.\\
\end{align*}
Conditioning and summing over $Y$ also yields the expression for $\mathbb{P}_k(W'=W-1|\{X_i\})$ in the lemma.
\end{proof}

For the remainder of the section define
\begin{align}
c_k=\left(\frac{n}{k}\right)\left(\frac{n}{n-1}\right)^{k-1}. \label{cbin}
\end{align}

The next two lemmas prove a useful property of the constant $c_k$.

\begin{lemma}\label{W+}
$1-c_k\mathbb{P}_k(W'=W+1|\{X_i\})\geq0$.
\end{lemma}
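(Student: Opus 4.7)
The plan is to read both sides of the inequality probabilistically. Using Lemma \ref{pdef}, I would interpret the conditional probability as a product of two independent pieces. Given $W$, let $Y$ denote the (hypergeometric) number of ones among the $k$ coordinates chosen for replacement, and let $Z \sim \mathrm{Bin}(k, 1/n)$ denote the number of ones among the $k$ independent Bernoulli$(1/n)$ replacements. Then Lemma \ref{pdef} is exactly the statement
\begin{equation*}
\mathbb{P}_k(W'=W+1 \mid \{X_i\}) \;=\; \sum_{i=0}^{k-1} \mathbb{P}(Y=i \mid W)\,\mathbb{P}(Z=i+1),
\end{equation*}
since $W' = W+1$ happens exactly when the chosen block had $i$ ones and the replacement block has $i+1$ ones.

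The key observation is that $c_k$ has a probabilistic meaning as well:
\begin{equation*}
c_k^{-1} \;=\; \frac{k}{n}\left(\frac{n-1}{n}\right)^{k-1} \;=\; \mathbb{P}(Z=1).
\end{equation*}
So the inequality we must prove becomes
\begin{equation*}
\sum_{i=0}^{k-1} \mathbb{P}(Y=i \mid W)\,\frac{\mathbb{P}(Z=i+1)}{\mathbb{P}(Z=1)} \;\leq\; 1.
\end{equation*}
Since $\sum_{i=0}^{k-1}\mathbb{P}(Y=i\mid W) \leq 1$, it suffices to show that each ratio $\mathbb{P}(Z=i+1)/\mathbb{P}(Z=1)$ is at most $1$, i.e.\ that $Z$ takes no value more often than it takes the value $1$. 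A direct computation yields
\begin{equation*}
\frac{\mathbb{P}(Z=i+1)}{\mathbb{P}(Z=1)} \;=\; \frac{\binom{k-1}{i}}{(i+1)\,(n-1)^i},
\end{equation*}
and since $\binom{k-1}{i} \leq (n-1)^i$ termwise (using $k-1 \leq n-1$ and $i! \geq 1$), this ratio is bounded by $1/(i+1) \leq 1$ for every $i \geq 0$.

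There is essentially no obstacle in this argument; it is really just the observation that the mode of a $\mathrm{Bin}(k, 1/n)$ distribution with $k \leq n$ sits at $0$ or $1$, so $\mathbb{P}(Z=1)$ dominates $\mathbb{P}(Z=j)$ for $j \geq 1$. As a sanity check, equality holds in the lemma at $W=0$: only the $i=0$ summand survives there and its ratio is exactly $1$, which both confirms the computation and shows that the constant $c_k$ in \eqref{cbin} is the largest constant for which the stated inequality can hold in general.
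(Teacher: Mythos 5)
Your proof is correct and is essentially the paper's own argument: both condition on $Y$ (the number of ones among the replaced coordinates), recognize $c_k^{-1} = \tfrac{k}{n}\bigl(\tfrac{n-1}{n}\bigr)^{k-1}$, and bound each summand by noting that $\binom{k-1}{i}/\bigl((i+1)(n-1)^i\bigr)\le 1$ for $k\le n$, then sum against the distribution of $Y$. The only difference is cosmetic: you make the identification $c_k^{-1}=\mathbb{P}(Z=1)$ and the mode-of-$\mathrm{Bin}(k,1/n)$ interpretation explicit, whereas the paper does the same algebra without that probabilistic framing.
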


\begin{proof}
Conditioning and summing over the random variable $Y$ equal to the number of ones chosen in the $k$ coordinates,
\begin{align*}
\mathbb{P}_k(W'=W&+1|\{X_i\})=\sum_{i=0}^{k-1}\binom{k}{i+1}\left(\frac{n-1}{n}\right)^{k-i-1}\left(\frac{1}{n}\right)^{i+1}
\mathbb{P}_k(Y=i) \\
					&= \sum_{i=0}^{k-1}\left(\frac{k}{n(i+1)}\right)\left(\frac{n-1}{n}\right)^{k-1}\binom{k-1}{i}\left(\frac{1}{n-1}\right)^{i}
\mathbb{P}_k(Y=i) \\
					&\leq \sum_{i=0}^{k-1}\left(\frac{k}{n}\right)\left(\frac{n-1}{n}\right)^{k-1}\mathbb{P}_k(Y=i) \\
					&\leq \frac{k}{n}\left(\frac{n-1}{n}\right)^{k-1}.
\end{align*}
Here the first inequality follows from the fact that $k\leq n$. 
\end{proof}

\begin{lemma}\label{W-}
$W-c_k\mathbb{P}_k(W'=W-1|\{X_i\})\geq0$.
\end{lemma}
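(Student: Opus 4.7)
The plan is to adapt the strategy of Lemma \ref{W+}: condition on $Y$, the number of ones among the $k$ coordinates selected for replacement, and extract a factor of $W$ from the resulting sum via a hypergeometric identity. Specifically, $\mathbb{P}_k(Y=i)=\binom{W}{i}\binom{n-W}{k-i}/\binom{n}{k}$ is hypergeometric, so applying $i\binom{W}{i}=W\binom{W-1}{i-1}$ and $\binom{n-1}{k-1}/\binom{n}{k}=k/n$ gives
\[i\,\mathbb{P}_k(Y=i) = W\cdot\frac{k}{n}\cdot \mathbb{P}(\tilde Y = i-1),\]
where $\tilde Y$ is hypergeometric, drawing $k-1$ coordinates from $n-1$ items containing $W-1$ ones.

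Multiplying each summand in the expansion of $\mathbb{P}_k(W'=W-1|\{X_i\})$ from Lemma \ref{pdef} by $i/i$, applying the identity above, reindexing with $j=i-1$, and combining with the factor $c_k=(n/k)(n/(n-1))^{k-1}$ yields, after straightforward simplification of the powers of $n$ and $n-1$,
\[c_k\mathbb{P}_k(W'=W-1|\{X_i\}) = W\cdot\frac{n-1}{n}\sum_{j=0}^{k-1}\frac{\binom{k}{j}}{(j+1)(n-1)^{j}}\mathbb{P}(\tilde Y = j).\]
It then suffices to prove the sum on the right is at most $n/(n-1)$.

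For this last bound, I would use $\binom{k}{j}\leq\binom{n}{j}\leq n(n-1)^{j-1}/j!$, the second inequality coming from replacing each factor past the first in the falling factorial $n(n-1)\cdots(n-j+1)$ by $n-1$. This gives $\binom{k}{j}/((j+1)(n-1)^{j})\leq n/((j+1)!(n-1))$, and plugging in, along with $1/(j+1)!\leq 1$ and $\sum_{j\geq 0}\mathbb{P}(\tilde Y=j)=1$, finishes the argument. The degenerate case $W=0$ is immediate, since $W'=-1$ is impossible. The main (mild) obstacle is the algebraic bookkeeping through the reindexing, especially recognising $\binom{W-1}{j}\binom{n-W}{k-1-j}/\binom{n-1}{k-1}$ as the shifted hypergeometric probability $\mathbb{P}(\tilde Y=j)$, so that the concluding step reduces to a probability sum rather than an unrestrained sum of reciprocals of factorials.
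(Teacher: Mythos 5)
Your proof is correct and follows essentially the same route as the paper: both condition on $Y$, use $i\binom{W}{i}=W\binom{W-1}{i-1}$ together with $\binom{n}{k}=\tfrac{n}{k}\binom{n-1}{k-1}$ to pull out a factor of $W$ and expose the shifted hypergeometric $\mathbb{P}(\tilde Y=i-1)$, and then bound the remaining coefficient by $1$ (equivalently $n/(n-1)$ per summand) before summing the probabilities to $1$. The only cosmetic difference is that you fold $c_k$ in before bounding and retain the sharper factor $1/j!$, whereas the paper drops the $1/i$ first and bounds $\binom{k}{i-1}/(n(n-1)^{i-2})\leq 1$ afterward.
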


\begin{proof}
For $W=0$, the lemma is trivially true.  For $W \not = 0 $,
we condition and sum over the random variable $Y$ equal to the number of ones chosen in the $k$ coordinates to obtain
\begin{align*}
\mathbb{P}_k&(W'=W-1|\{X_i\})=\sum_{i=1}^{k}\mathbb{P}_k(W'=W-1|Y=i)
\left[\frac{\binom{W}{i}\binom{n-W}{k-i}}{\binom{n}{k}} \right] \\
					&=\sum_{i=1}^k\binom{k}{i-1}\left(\frac{n-1}{n}\right)^{k-i+1}\left(\frac{1}{n}\right)^{i-1}
\left(\frac{Wk}{n}\right)\left[\frac{\binom{W-1}{i-1}\binom{n-W}{k-i}}{i\binom{n-1}{k-1}} \right] \\
					&\leq \left(\frac{Wk}{n}\right)\left(\frac{n-1}{n}\right)^{k-1}\sum_{i=1}^k\binom{k}{i-1}\frac{1}{n(n-1)^{i-2}}
\left[\frac{\binom{W-1}{i-1}\binom{(n-1)-(W-1)}{(k-1)-(i-1)}}{\binom{n-1}{k-1}} \right] \\
					&\leq \left(\frac{Wk}{n}\right)\left(\frac{n-1}{n}\right)^{k-1}.
\end{align*}
To see the final inequality, notice that for each summand, the second part of the product is a probability of the hypergeometric
distribution and the first part of the product is at most one.

\end{proof}

The previous two lemmas show that for $c=c_k$ defined by (\ref{cbin}), the terms within the absolute values in (\ref{cdme}) are positive.
Under this constraint, note that the error terms are decreasing in $c_k$ and that $1-c_k\mathbb{P}_k(W'=W+1|W=0)=0$.
These observations imply that among
constants satisfying Lemmas \ref{W+} and \ref{W-}, the error from Theorem \ref{cdm05} is minimized for each $k$ when $c_k$ is defined as (\ref{cbin}).
As discussed in the previous section, this is a natural way to choose the constant in the approximation that allows for the analysis done here.

We pause here to show in a simple example the difference in the error terms from Theorem \ref{cdm05} 
when choosing the constant $c$ according to the two approaches outlined in Section \ref{poisintro}.  First, we will
determine the error terms using the strategy we take here in the case $k=1$.  For the following computations, recall that $p=1/n$.
We have
\begin{align}
1-c_1\mathbb{P}_1(W'=W+1|\{X_i\})&=1-(n-W)p=pW, \notag \\
W-c_1\mathbb{P}_1(W'=W-1|\{X_i\})&=W-W(1-p)=pW, \notag
\end{align}
which implies the error from Theorem \ref{cdm05} is equal to $2p$ (from \cite{cdm05}, $C_\lambda=1$ for $\lambda\leq 1$).
Choosing instead $c_1'=n/(1-p)$, so that $c_1'\mathbb{P}_1(W'=W+1)=1$ (this was discussed as the alternative system of choosing $c$), we obtain
\begin{align}
1-c_1'\mathbb{P}_1(W'=W+1|\{X_i\})&=1-\frac{(n-W)p}{1-p}=\frac{p(W-1)}{1-p}, \notag \\
W-c_1'\mathbb{P}_1(W'=W-1|\{X_i\})&=W-W=0, \notag
\end{align}
which implies the error from Theorem \ref{cdm05} is equal to
\[\frac{p\mathbb{E}|W-1|}{1-p}\leq\frac{p}{\sqrt{1-p}}.\]
In the limit, the two error terms differ in quality only by a constant and $c_1$ is asymptotically equal to $c_1'$. 
Although the Cauchy-Schwarz approach using $c_1'$ asymptotically yields a better constant, computing the appropriate moment information for general $k$
using this scheme is much more difficult than the strategy we choose.
Also, this small example suggests that the Cauchy-Schwarz approach will yield superior asymptotic rates only in the constant, so
that it is not worth the extra effort of computing the more complicated (and higher) moment information
needed in order to undertake the type of analysis presented in this paper.
Finally, we note that using the chain here (with $k=1$), 
it is possible to use intermediate terms in the proof of Theorem \ref{cdm05} with the constant $c_1$ to obtain the superior upper bound of $p$ 
\cite{cdm05}, however this approach does not carry over to the chains with larger step size.

Moving forward, in order to apply the theorem, we need to take the expected value of the terms in Lemma \ref{pdef}. 
The next lemma
has a nice expression for the expectation we need.

\begin{lemma}\label{pmom}
\[\mathbb{E}\left[\binom{W}{i}\binom{n-W}{k-i}\right]i!(k-i)!=n\cdots(n-k+1)\frac{(n-1)^{k-i}}{n^k}.\]
\end{lemma}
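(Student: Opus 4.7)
The plan is to recognize that the product $\binom{W}{i}\binom{n-W}{k-i}$ is a sum of indicator functions counting ordered pairs of disjoint subsets of the coordinate set, and then exploit the independence of the underlying Bernoulli variables.

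First I would write
\[
\binom{W}{i}\binom{n-W}{k-i} = \sum_{(S_1,S_2)} \mathbf{1}\{X_j=1\ \forall j\in S_1\}\,\mathbf{1}\{X_j=0\ \forall j\in S_2\},
\]
where the sum ranges over ordered disjoint pairs of subsets of $\{1,\dots,n\}$ with $|S_1|=i$ and $|S_2|=k-i$. Taking expectations and using independence of the $X_j$'s, each summand contributes $(1/n)^i\bigl((n-1)/n\bigr)^{k-i}$, and the number of such pairs is $\binom{n}{i}\binom{n-i}{k-i}$. Therefore
\[
\mathbb{E}\!\left[\binom{W}{i}\binom{n-W}{k-i}\right] = \binom{n}{i}\binom{n-i}{k-i}\cdot\frac{1}{n^i}\cdot\frac{(n-1)^{k-i}}{n^{k-i}}.
\]

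Next I would simplify the product of binomial coefficients as
\[
\binom{n}{i}\binom{n-i}{k-i}=\frac{n!}{i!(k-i)!(n-k)!},
\]
so that multiplication by $i!(k-i)!$ gives $n!/(n-k)!=n(n-1)\cdots(n-k+1)$, yielding the claimed identity. Since everything reduces to a counting argument plus independence, there is no substantial obstacle; the main thing to be careful about is the combinatorial identity $\binom{n}{i}\binom{n-i}{k-i}i!(k-i)! = n!/(n-k)!$, which is just the number of ways to pick an ordered disjoint pair of labeled tuples of the stated sizes from $n$ elements.
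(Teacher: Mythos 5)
Your proof is correct, but it takes a genuinely different route from the paper. The paper computes the bivariate generating function $\mathbb{E}(s^W r^{n-W}) = \left(\tfrac{s}{n}+r\tfrac{n-1}{n}\right)^n$ and extracts the falling-factorial moment $\mathbb{E}[(W)_i\,(n-W)_{k-i}]$ by taking $i$ derivatives in $s$ and $k-i$ derivatives in $r$ at $s=r=1$, so that the whole identity falls out of a single calculus computation. You instead recognize $\binom{W}{i}\binom{n-W}{k-i}$ as a sum of indicators over ordered disjoint pairs of subsets $(S_1,S_2)$ of $\{1,\dots,n\}$ with $|S_1|=i$, $|S_2|=k-i$, use independence of the underlying Bernoulli coordinates to evaluate each term, and then simplify $\binom{n}{i}\binom{n-i}{k-i}\,i!\,(k-i)! = n!/(n-k)!$. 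Both arguments are short and clean. The generating-function route is faster if one already has the machinery in hand (and it is in the same spirit as the mgf computation in Lemma~\ref{mom}), while your decomposition is more elementary and makes the combinatorial content of the falling-factorial moment explicit; it also makes clear exactly where the independence of the $X_j$ enters.
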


\begin{proof}
\begin{align}
\mathbb{E}(s^Wr^{n-W})&=r^n\mathbb{E}\left[\left(\frac{s}{r}\right)^W\right] \notag\\
					  &=\left(\frac{s}{n}+r\left(\frac{n-1}{n}\right)\right)^n.  \label{binmo}
\end{align}
Taking $i$ derivatives with respect to $s$ and $k-i$ derivatives with respect to $r$ of (\ref{binmo}) and evaluating at
$r=s=1$ implies the lemma.
\end{proof}

The final lemma in this section establishes bounds on the error from Theorem \ref{cdm05}.

\begin{lemma}\label{pord}
Both of $\mathbb{E}[c_k\mathbb{P}_k(W'=W+1|\{X_i\})]$ and $\mathbb{E}[c_k\mathbb{P}_k(W'=W-1|\{X_i\})]$ are bounded above by
\[\left(\frac{n-1}{n}\right).\]
\end{lemma}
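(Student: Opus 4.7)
The plan is to reduce both of the claimed bounds to a single closed-form sum, and then bound that sum by $(n-1)/n$.

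First, I would take expectations in the formulas from Lemma \ref{pdef} and apply Lemma \ref{pmom}, which gives the simplification $\mathbb{E}[\binom{W}{i}\binom{n-W}{k-i}/\binom{n}{k}]=\binom{k}{i}(n-1)^{k-i}/n^{k}$ (a priori this is just the probability that a Binomial$(k,1/n)$ random variable equals $i$). After cancelling the definition $c_{k}=n^{k}/(k(n-1)^{k-1})$, the $W+1$ case collapses to
\[
\mathbb{E}[c_{k}\mathbb{P}_{k}(W'=W+1\mid\{X_{i}\})]=\frac{1}{kn^{k}}\sum_{i=0}^{k-1}\binom{k}{i}\binom{k}{i+1}(n-1)^{k-2i}.
\]
The same expression arises in the $W-1$ case, either by a parallel computation or, more cheaply, because exchangeability of $(W,W')$ forces $\mathbb{P}(W'=W+1)=\mathbb{P}(W'=W-1)$.

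It therefore suffices to show
\[
S_{k}(n):=\sum_{i=0}^{k-1}\binom{k}{i}\binom{k}{i+1}(n-1)^{k-2i}\;\le\;k(n-1)n^{k-1}\qquad(1\le k\le n).
\]
I would prove this via the coefficient-extraction identity
\[
(n-1)^{k-1}S_{k}(n)=[x^{k+1}](x+n-1)^{k}\bigl((n-1)x+1\bigr)^{k},
\]
obtained by a Vandermonde-type expansion of each factor with the binomial theorem, and then bound the resulting coefficient either by choosing a judicious $x_{0}>0$ in the estimate $[x^{k+1}]P(x)\le P(x_{0})/x_{0}^{k+1}$ (valid for polynomials with non-negative coefficients), or by induction on $k$ using Pascal's rule to expand $\binom{k+1}{i}\binom{k+1}{i+1}$. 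The base case $k=1$ is equality, and the leading $i=0$ term $k(n-1)^{k}$ is essentially tight, so the higher-order terms need only to be absorbed into the remaining slack $k(n-1)(n^{k-1}-(n-1)^{k-1})$.

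The main obstacle is this final inequality, and in particular the essential use of the hypothesis $k\le n$: numerical checks show the inequality fails for $k>n$ (where the chain of Section \ref{poisintro} is in any case not defined), so the proof must invoke $k\le n$ in a non-trivial way. The natural vehicle is the estimate $\binom{k-1}{i}\le(n-1)^{i}/i!$ which already underlies the proof of Lemma \ref{W+} and which forces each $i\ge 1$ term of $S_{k}(n)$ to be dominated by the leading one.
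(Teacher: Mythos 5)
Your computation of the expectation is correct: after combining Lemmas \ref{pdef} and \ref{pmom} and cancelling $c_k$, the target quantity is indeed $\frac{1}{kn^k}\sum_{i=0}^{k-1}\binom{k}{i}\binom{k}{i+1}(n-1)^{k-2i}$, and exchangeability does reduce the $W-1$ case to the $W+1$ case. You also correctly identify both the inequality
\[
S_k(n):=\sum_{i=0}^{k-1}\binom{k}{i}\binom{k}{i+1}(n-1)^{k-2i}\le k(n-1)n^{k-1}
\]
as the heart of the matter and the necessity of using $k\le n$. But neither of the two routes you sketch closes the gap, and you acknowledge as much. The coefficient-extraction bound $[x^{k+1}]P\le P(x_0)/x_0^{k+1}$ is simply too lossy here: already for $k=2,n=3$ we have $(n-1)^{k-1}S_k=20$ while the minimum of $(x+2)^2(2x+1)^2/x^3$ over $x>0$ is about $45$, far exceeding the target $24$, so no choice of $x_0$ can work. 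The term-by-term domination idea is also insufficient: even if each $i\ge1$ term of $S_k(n)$ is below the leading term $k(n-1)^k$, adding up $k$ such bounds gives $k^2(n-1)^k$, which exceeds $k(n-1)n^{k-1}$ whenever $k(n-1)^{k-1}>n^{k-1}$, e.g.\ for $k=2$ and $n$ large. The induction plan is not carried out.

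The paper's proof finishes in one line by a different and sharper decomposition of the \emph{summand}: using $\binom{k}{i}\binom{k}{i+1}=\binom{k-1}{i}^2\frac{k^2}{(i+1)(k-i)}$, one writes the $i$-th term of the normalized sum (after extracting a factor $(n-1)/n$) as
\[
\binom{k-1}{i}\left(\frac{n-1}{n}\right)^{k-1-i}\left(\frac{1}{n}\right)^{i}\;\cdot\;\frac{\binom{k-1}{i}}{(n-1)^{i}}\;\cdot\;\frac{k}{(i+1)(k-i)}.
\]
The first factor is the $\mathrm{Binomial}(k-1,1/n)$ probability of the value $i$, so these \emph{sum to} $1$ over $i=0,\dots,k-1$; the second is $\le1$ because $\binom{k-1}{i}\le(k-1)^i\le(n-1)^i$ when $k\le n$ (this is exactly where your hypothesis is used, and is essentially the bound you flag); and the third is $\le1$ since $(i+1)(k-i)=k+i(k-1-i)\ge k$. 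The point is that the probability factor controls the sum globally, so no crude $k$-fold addition of worst-case terms is needed. Your proposal has the right ingredients but lacks this factorization, which is what makes the bound sum to the right thing instead of merely bounding each term.
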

\begin{proof}
From Lemmas \ref{pdef} and \ref{pmom} we have
\begin{align*}
\mathbb{E}&[c_k\mathbb{P}_k(W'=W+1|\{X_i\})]=c_k\sum_{i=0}^{k-1}\binom{k}{i+1}\binom{k}{i}\left(\frac{(n-1)^{2k-2i-1}}{n^{2k}}\right) \\
								&= c_k\left(\frac{n-1}{n}\right)^k\left(\frac{k}{n}\right) 
\sum_{i=0}^{k-1}\binom{k-1}{i}^2\left(\frac{(n-1)^{k-i-1}}{n^{k-1}(n-1)^i}\right)\left(\frac{k}{(i+1)(k-i)}\right) \\
								&=\left(\frac{n-1}{n}\right)
\sum_{i=0}^{k-1}\binom{k-1}{i}\left(\frac{(n-1)^{k-i-1}}{n^{k-1}}\right)\left(\binom{k-1}{i}\frac{1}{(n-1)^i}\right)\left(\frac{k}{(i+1)(k-i)}\right) \\
								&\leq\left(\frac{n-1}{n}\right).
\end{align*}
The inequality follows from the fact that each summand is the product of two terms no larger than one and a probability.

For the remaining term, exchangeability implies
$\mathbb{E}[c_k\mathbb{P}_k(W'=W-1|\{X_i\})]=\mathbb{E}[c_k\mathbb{P}_k(W'=W+1|\{X_i\})]$, which proves the lemma.
\end{proof}

\begin{theorem}\label{pinc}
For the values of $c_k$ defined previously, the error term from Theorem \ref{cdm05} is minimized for $k=1$ and is equal to $2p$.
\end{theorem}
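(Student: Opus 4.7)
The plan is to combine the pointwise non-negativity from Lemmas \ref{W+} and \ref{W-}, the exchangeability of $(W,W')$, and the upper bound from Lemma \ref{pord} to reduce the error bound in Theorem \ref{cdm05} to an explicit quantity, then verify that $k=1$ attains the resulting lower bound by a direct computation.

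First I would note that since $np=1$ we have $\lambda=1$, so the constant $C_\lambda$ in Theorem \ref{cdm05} may be taken to be one (as mentioned in the discussion after the theorem). Next, the choice $c=c_k$ was engineered precisely so that Lemmas \ref{W+} and \ref{W-} give $1-c_k\mathbb{P}_k(W'=W+1\mid\{X_i\})\geq 0$ and $W-c_k\mathbb{P}_k(W'=W-1\mid\{X_i\})\geq 0$ pointwise. This lets me drop the absolute values in (\ref{cdme}) and rewrite the bound using linearity as
\[
2-c_k\bigl(\mathbb{E}[\mathbb{P}_k(W'=W+1\mid\{X_i\})]+\mathbb{E}[\mathbb{P}_k(W'=W-1\mid\{X_i\})]\bigr),
\]
where I have used $\mathbb{E}(W)=1$.

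Now I would invoke exchangeability of $(W,W')$, which forces $\mathbb{P}(W'=W+1)=\mathbb{P}(W'=W-1)$ and hence the two expectations above are equal (this is the same identity used in the last line of Lemma \ref{pord}). The bound collapses to $2-2c_k\,\mathbb{E}[\mathbb{P}_k(W'=W+1\mid\{X_i\})]$, and Lemma \ref{pord} gives
\[
c_k\,\mathbb{E}[\mathbb{P}_k(W'=W+1\mid\{X_i\})]\leq\frac{n-1}{n},
\]
so the error bound is at least $2-2(n-1)/n=2/n=2p$ for every $k\in\{1,\ldots,n\}$.

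To finish, I would verify that $k=1$ achieves this lower bound by direct computation from Lemma \ref{pdef}: with $c_1=n$ one gets $c_1\mathbb{P}_1(W'=W+1\mid\{X_i\})=(n-W)/n=1-pW$ and $c_1\mathbb{P}_1(W'=W-1\mid\{X_i\})=(n-1)W/n=(1-p)W$, so each of the two terms in (\ref{cdme}) equals $\mathbb{E}(pW)=p$, and the total is exactly $2p$. No serious obstacle arises: the analytic content is already packaged in Lemmas \ref{W+}, \ref{W-} and \ref{pord}, and the remaining work is the assembly above plus a one-line check that $k=1$ saturates the inequality in Lemma \ref{pord}. The closest thing to a subtle point is ensuring that the estimate in Lemma \ref{pord} is sharp at $k=1$, which is transparent from the proof of that lemma since the single summand there equals $(n-1)/n$ exactly.
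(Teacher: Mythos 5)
Your proposal is correct and follows essentially the same route as the paper: drop the absolute values via Lemmas \ref{W+} and \ref{W-}, reduce the bound to $2-c_k\mathbb{E}[\mathbb{P}_k(W'=W+1|\{X_i\})]-c_k\mathbb{E}[\mathbb{P}_k(W'=W-1|\{X_i\})]$, apply Lemma \ref{pord} to get the lower bound $2p$, and verify that $k=1$ attains it by direct computation (which the paper had already carried out earlier in the section). You spell out the intermediate steps a bit more explicitly than the paper's terse proof does, but the lemmas invoked and the logic are identical.
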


\begin{proof}
The final bound
was computed previously in this section and the fact that it is minimum follows directly from Lemma \ref{pord} and the easily verified fact
\[\mathbb{E}[c_1\mathbb{P}_1(W'=W+1|\{X_i\})]=\mathbb{E}[c_1\mathbb{P}_1(W'=W-1|\{X_i\})]=\frac{n-1}{n}.\]
\end{proof}








\section{Negative Binomial Distribution}\label{NBpois}

The final example presented in this paper is the approximation of the negative binomial distribution by the Poisson. A random
variable $X$ has the geometric distribution with parameter $p$ if $\mathbb{P}(X=i)=(1-p)^ip$ for all non-negative integers $i$.
Classically, the random variable $X$ is viewed as the number of failures before the first success in a sequence of independent Bernoulli trials
each with probability of success equal to $p$.  The random variable $W$ is negative binomial with
parameters $r$ and $p$ if $W=\sum_{i=1}^rX_i$, where the $X_i$ are independent geometric random variables with parameter $p$.  By viewing $W$
as the number of failures before
$r$ successes have occurred in a sequence of Bernoulli trials, it is easy to see that for all non-negative integers $i$,
$\mathbb{P}(W=i)=\binom{r+i-1}{i}(1-p)^ip^{r}$. We will use Theorem \ref{cdm05} to approximate $W$ by $Poi(\lambda)$ where
$\lambda$ is the mean of $W$ equal to $r(1-p)/p$ (the mean of a geometric random variable is $(1-p)/p$).  

For fixed $\lambda$, $p=r/(\lambda+r)$ so that  
\begin{equation}\label{NBapp}
\mathbb{P}(W=i)=\frac{\lambda^i}{i!}\frac{(r+i-1)!}{(r-1)!(\lambda+r)^i}\left(1+\frac{\lambda}{r}\right)^{-r}.
\end{equation}
As $r$ goes to infinity, the distribution converges to a Poisson distribution with mean $\lambda$. However,
for fixed $\lambda$, $p$ approaches one as $r$ goes to infinity, so that when $p$ is small the negative binomial
will not be approximately Poisson.  Because of this fact,
in this example we will not obtain a result as straightforward as Theorem \ref{pinc}. For some values of $p$, the optimal
error term does not occur with the smallest step size.  We will prove all supporting lemmas for general $p$, but the final theorem will have a
natural restriction on the value of $p$.  For this case we will show that among the exchangeable pairs associated
with the family of Markov chains described in Section \ref{poisintro} the term from Theorem \ref{cdm05} is minimized when $k=1$.
First we will prove some lemmas that will be used to
compute the error term from the theorem.

\begin{lemma}\label{NBdef}
Let $\mathbb{P}_k$ denote probability under the chain that substitutes $k$ coordinates as described in Section \ref{poisintro}. Then
\begin{align*}
\mathbb{P}_k(W'=W+1|\{X_i\})& =\\
\binom{r}{k}^{-1}\mathop{\sum_{\{i_1,\ldots,i_k \}}}_{\subseteq \{1,\ldots,r\}}&\binom{k+\sum_jX_{i_j}}{k-1}(1-p)^{(\sum_jX_{i_j}+1)}p^k\\
\mathbb{P}_k(W'=W-1|\{X_i\})& =\\
\binom{r}{k}^{-1}\mathop{\sum_{\{i_1,\ldots,i_k \}}}_{\subseteq \{1,\ldots,r\}}
&\binom{k+\sum_j X_{i_j}-2}{k-1}(1-p)^{(\sum_j X_{i_j}-1)}p^k.\\
\end{align*}
\end{lemma}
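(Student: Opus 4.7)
The plan is to condition on the random subset of coordinates that the chain resamples and then recognize the resulting distribution as a negative binomial. Write $W=\sum_{i=1}^{r}X_{i}$ and let $\{i_{1},\ldots,i_{k}\}$ denote the uniformly chosen size-$k$ subset of indices, occurring with probability $\binom{r}{k}^{-1}$. Let $X_{i_{j}}'$ be the independent fresh geometric$(p)$ random variables placed at those positions, so that
\[
W'-W=\sum_{j=1}^{k}\bigl(X_{i_{j}}'-X_{i_{j}}\bigr).
\]
Thus, conditional on $\{X_{i}\}$ and on the choice of subset, the event $\{W'=W\pm 1\}$ is precisely $\{\sum_{j=1}^{k}X_{i_{j}}'=\sum_{j=1}^{k}X_{i_{j}}\pm 1\}$.

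Next I would invoke the fact that a sum of $k$ i.i.d. geometric$(p)$ random variables is negative binomial with parameters $k$ and $p$: for any non-negative integer $m$,
\[
\mathbb{P}\!\left(\sum_{j=1}^{k}X_{i_{j}}'=m\right)=\binom{k+m-1}{m}(1-p)^{m}p^{k}=\binom{k+m-1}{k-1}(1-p)^{m}p^{k}.
\]
Writing $S=\sum_{j}X_{i_{j}}$ and specializing to $m=S+1$ gives the factor $\binom{k+S}{k-1}(1-p)^{S+1}p^{k}$, while $m=S-1$ gives $\binom{k+S-2}{k-1}(1-p)^{S-1}p^{k}$; these match the summands in the claimed identities exactly.

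Finally, I would average over the choice of subset by summing against the uniform weight $\binom{r}{k}^{-1}$ to recover the displayed formulas. There is no real obstacle here: the argument is a direct decomposition, and the only thing to be careful about is the $W'=W-1$ case, where the binomial coefficient $\binom{k+S-2}{k-1}$ automatically vanishes when $S=0$ (since then $W'=W-1$ is impossible because geometric variables are non-negative), so no separate case analysis is needed. The entire lemma thus reduces to identifying the distribution of the fresh block as negative binomial and shifting the index appropriately.
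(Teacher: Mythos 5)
Your proof is correct and takes essentially the same approach as the paper, which simply states that the lemma "follows immediately from conditioning and summing over the subset of $(X_1,\ldots,X_r)$ chosen." Your identification of the resampled block as negative binomial$(k,p)$ and the index shift to $S\pm 1$ is exactly the intended argument, just spelled out in more detail.
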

\begin{proof}
This follows immediately from conditioning and summing over the subset of $(X_1,\ldots,X_r)$ chosen.
\end{proof}

For the remainder of the section define 
\begin{align}
c_k=\lambda a_k\left(k\binom{k+a_k-2}{k-1}(1-p)^{a_k}p^{k-1}\right)^{-1} \label{cneg}
\end{align}
where $a_k$ is the maximum of one and integer part of $\frac{(k-2)(1-p)}{p}$.  The next lemma states a useful property of $a_k$;
the proof can be found in \cite{jkk05}, but it is elementary so we will include it.

\begin{lemma}\label{ak}
Let $r$ be a positive integer, $x$ any non-negative integer, and $a$ be the
integer part of $\frac{(r-1)(1-p)}{p}$. If $f(y)=\binom{r+y-1}{r-1}(1-p)^{y}p^{r}$, then $f(x)\geq f(x-1)$ for $x\leq a$ and strictly decreasing otherwise.  In particular, $a$ is the mode of a negative binomial random variable with parameters $r$ and $p$.
\end{lemma}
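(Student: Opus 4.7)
The plan is to reduce everything to a single ratio computation. Since consecutive values of $f$ differ only in one binomial factor and one power of $(1-p)$, I would first compute, for $x \geq 1$,
\[
\frac{f(x)}{f(x-1)} = \frac{\binom{r+x-1}{r-1}}{\binom{r+x-2}{r-1}}\cdot(1-p) = \frac{r+x-1}{x}(1-p).
\]
The inequality $f(x) \geq f(x-1)$ is then equivalent to $(r+x-1)(1-p) \geq x$, which after rearranging reads $(r-1)(1-p) \geq xp$, i.e.\ $x \leq (r-1)(1-p)/p$.

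From here the two halves of the lemma fall out directly. For $x \leq a = \lfloor (r-1)(1-p)/p\rfloor$, the displayed inequality above is satisfied, so $f(x) \geq f(x-1)$; for $x \geq a+1$ we have $x > (r-1)(1-p)/p$ strictly (in the integer case $(r-1)(1-p)/p = a$, so $a+1 > a$; in the non-integer case $a+1 > (r-1)(1-p)/p$ already), so the ratio is strictly less than one and $f(x) < f(x-1)$. Thus $f$ is non-decreasing up to $a$ and strictly decreasing afterward, which identifies $a$ as the mode of $f$, the pmf of a negative binomial with parameters $r$ and $p$.

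Since the whole argument is a one-line ratio computation followed by an algebraic rearrangement, there is no real obstacle. The only subtlety worth stating carefully is the edge case when $(r-1)(1-p)/p$ happens to be an integer, where the integer part equals the value itself and one has to note that strict decrease begins at $x = a+1$ rather than $x = a$; this is why the lemma phrases the non-strict inequality as $f(x) \geq f(x-1)$ up to and including $x = a$. Apart from this bookkeeping, the proof is a direct computation and I would present it in essentially that form.
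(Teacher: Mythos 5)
Your proposal is correct and follows essentially the same route as the paper: the paper likewise computes the ratio of $f$ at consecutive integers (there written $\frac{f(x+1)}{f(x)}=\frac{x+r}{x+1}(1-p)$, an index shift of yours) and compares it to $1$. Your extra remark on the integer edge case is a harmless elaboration that the paper leaves implicit.
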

\begin{proof}
The ratio of $f$ evaluated at consecutive integers is given by
\[\frac{f(x+1)}{f(x)}=\left(\frac{x+r}{x+1}\right)(1-p).\]
Comparing this ratio to one implies the lemma.
\end{proof}


The next two lemmas prove a useful property of the constant $c_k$ as defined by (\ref{cneg}).
\begin{lemma}\label{NB+}
$\lambda-c_k\mathbb{P}_k(W'=W+1|\{X_i\})\geq0.$
\end{lemma}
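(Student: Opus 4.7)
The plan is to establish the pointwise inequality $g(S) \leq \lambda/c_k$ for every $S \geq 0$, where
\[
g(S) = \binom{k+S}{k-1}(1-p)^{S+1}p^k
\]
is the generic summand in the decomposition of $\mathbb{P}_k(W'=W+1|\{X_i\})$ given by Lemma \ref{NBdef}. Since the conditional probability is a uniform average over the $\binom{r}{k}$ subsets of such summands, the pointwise bound yields $c_k\mathbb{P}_k(W'=W+1|\{X_i\}) \leq \lambda$ immediately. The key recognition that drives the rest of the argument is that $g(S)$ equals the negative binomial $\mathrm{NB}(k,p)$ probability mass function evaluated at $S+1$, and that $c_k$ has been engineered so that $\lambda/c_k$ is comparable to the maximum of the $\mathrm{NB}(k-1,p)$ mass function.

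For the generic case $k \geq 2$ with $\lfloor(k-2)(1-p)/p\rfloor \geq 1$, I would exploit that $\mathrm{NB}(k,p)$ is the convolution of $\mathrm{NB}(k-1,p)$ and $\mathrm{Geom}(p)$ by writing it as a sum of $k$ independent geometric random variables. Convolving the corresponding pmfs gives
\begin{equation*}
g(S) = \sum_{j=0}^{S+1} \mathrm{NB}(k-1,p)(S+1-j)\cdot\mathrm{Geom}(p)(j) \leq \max_y \mathrm{NB}(k-1,p)(y),
\end{equation*}
since the geometric probabilities sum to at most one. Applying Lemma \ref{ak} with $r = k-1$ identifies $a_k$ as the mode of $\mathrm{NB}(k-1,p)$, so the right hand side equals $\binom{k+a_k-2}{k-2}(1-p)^{a_k}p^{k-1}$. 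A short binomial identity, $\binom{k+a_k-2}{k-2} = \tfrac{k-1}{a_k}\binom{k+a_k-2}{k-1}$, then rewrites this bound as $\tfrac{k-1}{k}\cdot(\lambda/c_k)$, which is strictly less than $\lambda/c_k$.

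The remaining situations are $k=1$, where $\mathrm{NB}(0,p)$ is degenerate and Lemma \ref{ak} is inapplicable, and those $k \geq 2$ for which $a_k = 1$ is imposed by the $\max$ convention rather than being the true mode of $\mathrm{NB}(k-1,p)$. Both require only $(k-2)(1-p) < p$, confining them to small $k$ (specifically $k \leq 2$, or $k \geq 3$ with $p$ close to one). In this regime $\lambda/c_k$ simplifies to $k(1-p)p^{k-1}$, and I would verify the bound directly: a short unimodality analysis of $g$ itself places its mode at or very near $S=0$, after which the elementary inequalities $g(0) = k(1-p)p^k \leq k(1-p)p^{k-1}$ and a one-line estimate of $(1-p)^{1/p}$ (bounded by $e^{-1}$) dispatch the remaining finite collection of cases. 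The main obstacle is exactly this mismatch between the clean convolution bound and the $\max$ convention hidden in the definition of $a_k$; bridging it requires a small direct computation rather than any new conceptual input.
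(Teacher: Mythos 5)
Your proposal is correct in structure and takes a genuinely different route from the paper. Both proofs start from the observation that the generic summand is the $\mathrm{NB}(k,p)$ mass function evaluated at $S+1$, and both lean on Lemma \ref{ak}. The paper's route is a two-stage argument: it first bounds $\mathrm{NB}(k,p)(S+1)$ by its value at the mode $b=\max(1,\lfloor(k-1)(1-p)/p\rfloor)$ (Lemma \ref{ak} with $r=k$), and then separately shows that the resulting expression $\lambda\cdot\frac{(k+b-1)p}{k}\cdot\frac{\mathrm{NB}(k-1,p)(b)}{\mathrm{NB}(k-1,p)(a_k)}$ is at most $\lambda$, using the inequality $(k+b-1)p\leq k$ together with Lemma \ref{ak} at $r=k-1$. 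Your convolution bound $\mathrm{NB}(k,p)(y)\leq\max_m \mathrm{NB}(k-1,p)(m)$ collapses those two stages into one, avoids introducing $b$ at all in the generic case, and even gives the slightly stronger conclusion $c_k\mathbb{P}_k(W'=W+1|\{X_i\})\leq\frac{k-1}{k}\lambda$. That is a real simplification. The trade-off is exactly where you flag it: the convolution bound is vacuous when $a_k$ is set to $1$ by the $\max$ convention (so $\max_m\mathrm{NB}(k-1,p)(m)=\mathrm{NB}(k-1,p)(0)>\mathrm{NB}(k-1,p)(a_k)$), whereas the paper's introduction of $b$ absorbs that regime uniformly, since $b\geq a_k$ still forces $\mathrm{NB}(k-1,p)(b)\leq\mathrm{NB}(k-1,p)(a_k)$ even when $a_k=1$ is imposed.

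On the degenerate case, your analysis has the right ingredients but the summary is loose. For $k\geq 3$ with $(k-2)(1-p)<p$ one has $p>1/2$, so the mode of $g$ is indeed at $S=0$ and $g(0)=k(1-p)p^k\leq k(1-p)p^{k-1}=\lambda/c_k$ closes it; $k=1$ is immediate. But $k=2$ is always degenerate ($a_2=1$ unconditionally), and as $p\to 0$ the mode of $g$ sits at $S=b-1$ with $b=\lfloor(1-p)/p\rfloor$ unbounded, so this is \emph{not} a finite collection of cases and the mode is not ``near $S=0$.'' It still works, but via the direct estimate $g(b-1)=(b+1)(1-p)^{b}p^{2}=\left[(b+1)p\right](1-p)^{b}p\leq(1-p)p\leq 2(1-p)p=\lambda/c_2$, using $bp\leq 1-p$; the $(1-p)^{1/p}\leq e^{-1}$ estimate you mention is not needed. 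With that correction the degenerate case is fully dispatched and your proof is complete.
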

\begin{proof}
\begin{align*}
c_k\mathbb{P}_k(W'&=W+1|\{X_i\}) = \\
&=\lambda\left(\frac{a_k}{k}\right)\binom{r}{k}^{-1} \mathop{\sum_{\{i_1,\ldots,i_k \}}}_{\subseteq \{1,\ldots,r\}}
\frac{\binom{k+\sum_jX_{i_j}}{k-1}(1-p)^{(\sum_jX_{i_j}+1)}p^k}
{\binom{k+a_k-2}{k-1}(1-p)^{a_k}p^{k-1}}.\\
\end{align*}
Define $b$ to be the maximum of one and the integer part of $\frac{(k-1)(1-p)}{p}$ and note that $b=1$ implies $a_k=1$. Lemma \ref{ak} implies
\begin{align*}
c_k\mathbb{P}_k(W'=W&+1|\{X_i\}) \\
&\leq \lambda\left(\frac{a_k}{k}\right)\binom{r}{k}^{-1} \mathop{\sum_{\{i_1,\ldots,i_k \}}}_{\subseteq \{1,\ldots,r\}}
\frac{\binom{k+b-1}{k-1}(1-p)^{b}p^{k}}
{\binom{k+a_k-2}{k-1}(1-p)^{a_k}p^{k-1}} \\
&=\lambda\left(\frac{(k+b-1)p}{k}\right)
\frac{\binom{k+b-2}{k-2}(1-p)^{b}}
{\binom{k+a_k-2}{k-2}(1-p)^{a_k}}\leq \lambda.\\
\end{align*}
The final inequality follows from the definition of $b$ and by applying Lemma
\ref{ak} with $r=k-1$. 
\end{proof}

\begin{lemma}\label{NB-}
$W-c_k\mathbb{P}_k(W'=W-1|\{X_i\})\geq0$.
\end{lemma}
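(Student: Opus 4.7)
The plan is to mirror the proof of Lemma \ref{NB+}, establishing the bound summand-by-summand in the expression for $\mathbb{P}_k(W'=W-1|\{X_i\})$ given in Lemma \ref{NBdef}. Using $\lambda=r(1-p)/p$ together with the identity $r/(k\binom{r}{k})=1/\binom{r-1}{k-1}$, I would first rewrite
\begin{equation*}
c_k\mathbb{P}_k(W'=W-1|\{X_i\})
=\binom{r-1}{k-1}^{-1}\mathop{\sum_{\{i_1,\ldots,i_k\}}}_{\subseteq\{1,\ldots,r\}}
\frac{a_k\binom{k+S_X-2}{k-1}(1-p)^{S_X-1}}{\binom{k+a_k-2}{k-1}(1-p)^{a_k-1}},
\end{equation*}
where $S_X=X_{i_1}+\cdots+X_{i_k}$.

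The core step is to prove the term-by-term inequality
\begin{equation*}
\frac{a_k\binom{k+y-2}{k-1}(1-p)^{y-1}}{\binom{k+a_k-2}{k-1}(1-p)^{a_k-1}}\leq y
\end{equation*}
for every non-negative integer $y$. For $y=0$ the left-hand side vanishes because $\binom{k-2}{k-1}=0$ under the stated convention. For $y\geq1$ the inequality is equivalent to $h(y)\leq h(a_k)$ where
\begin{equation*}
h(y)=\frac{\binom{k+y-2}{k-1}(1-p)^{y-1}}{y}.
\end{equation*}
A direct computation gives $h(y+1)/h(y)=(k+y-1)(1-p)/(y+1)$, and rearranging shows $h(y+1)\geq h(y)$ if and only if $y+1\leq(k-2)(1-p)/p$. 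Mimicking the monotonicity argument in Lemma \ref{ak}, it follows that over positive integers $h$ attains its maximum at $y=a_k$, with the $\max(1,\cdot)$ clause in the definition of $a_k$ covering the case $(k-2)(1-p)/p<1$, where $h$ is strictly decreasing.

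I would then apply this summand bound and swap the order of summation using
\begin{equation*}
\mathop{\sum_{\{i_1,\ldots,i_k\}}}_{\subseteq\{1,\ldots,r\}}S_X
=\sum_{i=1}^rX_i\cdot|\{S:i\in S\}|=W\binom{r-1}{k-1},
\end{equation*}
which combined with the initial rewrite yields $c_k\mathbb{P}_k(W'=W-1|\{X_i\})\leq W$. The main content is isolating the right normalized function $h$ and verifying that its mode equals $a_k$; the ratio computation for $h$ is the direct analogue of the one in Lemma \ref{ak}, and the extra factor $1/y$ is precisely what causes $(k-2)$ rather than $(k-1)$ to appear in the definition of $a_k$.
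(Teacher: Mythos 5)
Your proof is correct and takes essentially the same approach as the paper: the same rewrite via $c_k$ and $r/(k\binom{r}{k})=1/\binom{r-1}{k-1}$, a summand-wise bound, and the exchange of summation. The only cosmetic difference is that the paper converts $\binom{k+y-2}{k-1}$ to $\frac{y}{k-1}\binom{k+y-2}{k-2}$ and invokes Lemma~\ref{ak} with $r=k-1$, while you fold the factor $1/y$ into $h$ and re-derive the monotonicity by the ratio computation---but your $h(y)$ is proportional to the paper's $f(y)$ with $r=k-1$, so this is a re-proof of the same fact.
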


\begin{proof}
The cases $W=0$ or $k=1$ are simple to verify, so assume otherwise.
By Lemma \ref{NBdef} and the definition of $c_k$,
\begin{align*}
c_k\mathbb{P}_k&(W'=W-1|\{X_i\}) \\
&=\mathop{\sum_{\{i_1,\ldots,i_k \}}}_{\subseteq \{1,\ldots,r\}}\frac{\left(\sum_jX_{i_j}\right)}{\dbinom{r-1}{k-1}}
\frac{\dbinom{k+\sum_jX_{i_j}-2}{k-2}(1-p)^{(\sum_jX_{i_j})}p^{k-1}}
{\dbinom{k+a_k-2}{k-2}(1-p)^{a_k}p^{k-1}}\\
&\leq \dbinom{r-1}{k-1}^{-1}\mathop{\sum_{\{i_1,\ldots,i_k \}}}_{\subseteq \{1,\ldots,r\}}\left(\sum_jX_{i_j}\right)=\sum_{i=1}^rX_i=W.\\
\end{align*}
An application of Lemma \ref{ak} with $r=k-1$ implies the inequality.
\end{proof}

The previous two lemmas show that for $c=c_k$ defined by (\ref{cneg}), the terms within the absolute values in (\ref{cdme}) are positive. 
Also note that 
\[c_k\mathbb{P}_k(W'=W-1|X_1=a_k,X_i=0; i>1)=a_k=W \not= 0,\]
so that among
constants satisfying Lemmas \ref{NB+} and \ref{NB-}, the error from Theorem
\ref{cdm05} is minimized for each $k$ when $c_k$ is defined as (\ref{cneg}).

To apply the theorem, we need to take the expected value of the term in Lemma \ref{NBdef}. 
The next lemma has a nice expression for the expectation we need.

\begin{lemma}
If $Y$ is a random variable distributed as negative binomial with parameters $p$ and $k$, then
\[\mathbb{E}\left[\binom{k+Y}{k-1}(1-p)^{Y}\right]=\frac{\sum_{l=0}^{k-1}\binom{k}{l+1}\binom{k+l-1}{l}\left(\frac{(1-p)^2}{p(2-p)}\right)^l}{(2-p)^k}.\]
\end{lemma}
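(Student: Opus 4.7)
The plan is to use a Vandermonde-type decomposition of $\binom{k+Y}{k-1}$ in the variable $Y$, which reduces the computation to evaluating $\mathbb{E}[\binom{Y}{l}q^Y]$ for $l=0,\ldots,k-1$, which can be done cleanly via differentiation of the probability generating function of $Y$.

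First, applying Vandermonde's identity to $\binom{k+y}{y+1} = \binom{k+y}{k-1}$, I would establish
\[\binom{k+y}{k-1}=\sum_{l=0}^{k-1}\binom{k}{l+1}\binom{y}{l},\]
valid for all nonnegative integers $y$. By linearity of expectation, this reduces the target to computing
\[\mathbb{E}\!\left[\binom{k+Y}{k-1}(1-p)^Y\right]=\sum_{l=0}^{k-1}\binom{k}{l+1}\,\mathbb{E}\!\left[\binom{Y}{l}(1-p)^Y\right].\]

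Next, writing $q=1-p$ and using the probability generating function $f(s)=\mathbb{E}[s^Y]=\left(p/(1-qs)\right)^k$ of the negative binomial distribution with parameters $p$ and $k$, I would observe that
\[\mathbb{E}\!\left[\binom{Y}{l}s^Y\right]=\frac{s^l}{l!}f^{(l)}(s).\]
Direct differentiation gives $f^{(l)}(s)=p^k\,\frac{(k+l-1)!}{(k-1)!}\,q^l(1-qs)^{-k-l}$, and evaluating at $s=q$, while using $1-q^2=p(2-p)$, yields
\[\mathbb{E}\!\left[\binom{Y}{l}q^Y\right]=\binom{k+l-1}{l}\frac{q^{2l}}{p^l(2-p)^{k+l}}.\]

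Substituting this expression into the sum and factoring out the common $(2-p)^{-k}$ produces
\[\mathbb{E}\!\left[\binom{k+Y}{k-1}q^Y\right]=\frac{1}{(2-p)^k}\sum_{l=0}^{k-1}\binom{k}{l+1}\binom{k+l-1}{l}\left(\frac{q^2}{p(2-p)}\right)^l,\]
which matches the claimed formula. The only mildly delicate step is the Vandermonde rewriting of $\binom{k+y}{k-1}$, in particular checking the summation range: the factor $\binom{k}{l+1}$ forces $l\le k-1$ and $\binom{y}{l}$ vanishes for $l>y$, so the identity holds uniformly in $y$. Everything else is a routine pgf derivative and bookkeeping with the identity $1-q^2=p(2-p)$.
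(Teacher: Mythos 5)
Your proof is correct, and it takes a genuinely different route from the paper's. The paper first absorbs the factor $(1-p)^Y$ into the negative binomial pmf of $Y$ and recognizes the resulting tilted distribution as a new negative binomial random variable $Z$ with parameter $p(2-p)$ (this produces the $(2-p)^{-k}$ normalization immediately); it then computes $\mathbb{E}\bigl[\binom{k+Z}{k-1}\bigr]$ by differentiating $\mathbb{E}[s^{k+Z}] = q^k s^k (1-(1-q)s)^{-k}$ a total of $k-1$ times with the Leibniz product rule and evaluating at $s=1$. You instead leave $Y$ alone, use the Vandermonde decomposition $\binom{k+y}{k-1}=\sum_{l=0}^{k-1}\binom{k}{l+1}\binom{y}{l}$ to split the binomial coefficient into falling factorials, and then compute each $\mathbb{E}\bigl[\binom{Y}{l}(1-p)^Y\bigr]$ by a single $l$-fold derivative of the plain pgf $f(s)=(p/(1-qs))^k$ evaluated at $s=q$. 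The paper's approach buys a probabilistic interpretation (an exponential tilt maps one negative binomial to another), at the cost of a Leibniz expansion; your approach trades that for a purely combinatorial identity plus the simpler differentiation of a pure power $(1-qs)^{-k}$, with no product rule. Both are valid and lead to the same closed form; your route arguably makes the appearance of $\binom{k+l-1}{l}$ and the ratio $(1-p)^2/(p(2-p))$ a bit more transparent term by term.
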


\begin{proof}
By the definition of expected value,
\begin{align}
\mathbb{E}\left[\binom{k+Y}{k-1}(1-p)^{Y}\right]&=\sum_{i\geq 0} (1-p)^{i}\binom{k+i}{k-1}(1-p)^i\binom{k+i-1}{k-1}p^k  \notag \\
		&=\frac{\sum_{i} \binom{k+i}{k-1}\binom{k+i-1}{k-1}((1-p)^{2})^{i}(p(2-p))^k}{(2-p)^k}. \label{expneg}
\end{align}
If $Z$ is a random variable distributed as negative binomial with parameters and $q=p(2-p)$ and $k$, then (\ref{expneg}) can be written as $\mathbb{E}\left[\binom{k+Z}{k-1}\right]/(2-p)^k$.  Using the fact that 
$Z$ is the sum of independent geometric random variables we have
\begin{align*}
\mathbb{E}(s^{k+Z})&=s^k\left(\frac{q}{1-(1-q)s}\right)^k =q^ks^k\left(\frac{1}{1-(1-q)s}\right)^k.
\end{align*}
Taking $k-1$ derivatives with respect to $s$ and dividing by $(k-1)!$ implies
\begin{align}
\mathbb{E}\left[\binom{k+Z}{k-1}s^{Z+1}\right]&=  \notag \\
\frac{q^k}{(k-1)!}\sum_{l=0}^{k-1}\binom{k-1}{l}&\frac{k!}{(l+1)!}s^{l+1}(1-q)^l
\frac{(k+l-1)!}{(k-1)!}\left(\frac{1}{1-(1-q)s}\right)^{k+l}. \label{expnegf}
\end{align}
Finally, substituting $s=1$ into (\ref{expnegf}) implies the lemma.
\end{proof}

The final results of this section will be stated in two cases.  The first case will pertain to ``small" values of $k$ where
$(k-1)(1-p)/p < 1$, and the ``large" case to all other values of $k$.  For fixed $k$ and $\lambda$, the small case is in some sense the
typical case as $p$ should be near one in order for $W$ to be approximately Poisson.  In this case, there is no need for further restrictions
on the value of $p$ in order to prove results analogous to the previous section. However, in the large case additional assumptions will be 
made.  We will first state and prove results for the small case, then discuss the additional assumptions and prove results for the large case.

\begin{lemma}\label{NBords}
For $\frac{(k-1)(1-p)}{p} < 1$,
both of $\mathbb{E}[c_k\mathbb{P}_k(W'=W+1|\{X_i\})]$ and $\mathbb{E}[c_k\mathbb{P}_k(W'=W-1|\{X_i\})]$ are bounded above by
\[\frac{r(1-p)}{2-p}.\]
\end{lemma}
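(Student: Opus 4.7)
The plan is to exploit the condition $(k-1)(1-p)/p < 1$ first to simplify $c_k$ and then to carry out a term-by-term comparison inside an explicit formula. The hypothesis implies $(k-2)(1-p)/p < 1$ as well, so $\lfloor (k-2)(1-p)/p \rfloor = 0$ and hence $a_k = 1$. Plugging $a_k=1$ into (\ref{cneg}) with $\binom{k-1}{k-1}=1$ gives the clean value $c_k = r/(kp^k)$.

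Next I would compute $\mathbb{E}[c_k\mathbb{P}_k(W'=W+1|\{X_i\})]$ explicitly. By Lemma \ref{NBdef} and the i.i.d.\ property of the $X_i$, each of the $\binom{r}{k}$ summands has the same expectation $\mathbb{E}[\binom{k+Y}{k-1}(1-p)^{Y+1}p^k]$ with $Y=\sum_j X_{i_j} \sim NB(k,p)$. Applying the displayed formula from the unlabeled lemma immediately preceding Lemma \ref{NBords} converts this to
\begin{equation*}
\mathbb{E}[c_k\mathbb{P}_k(W'=W+1|\{X_i\})] = \frac{r(1-p)}{k(2-p)^k}\sum_{l=0}^{k-1}\binom{k}{l+1}\binom{k+l-1}{l}\left(\frac{(1-p)^2}{p(2-p)}\right)^l,
\end{equation*}
so the desired bound $\le r(1-p)/(2-p)$ is equivalent to showing
\begin{equation*}
\sum_{l=0}^{k-1}\binom{k}{l+1}\binom{k+l-1}{l}\left(\frac{(1-p)^2}{p(2-p)}\right)^l \le k(2-p)^{k-1}.
\end{equation*}

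The heart of the argument is to expand the right-hand side via the binomial theorem and the identity $k\binom{k-1}{l}=(l+1)\binom{k}{l+1}$ to obtain $k(2-p)^{k-1}=\sum_l (l+1)\binom{k}{l+1}(1-p)^l$. It then suffices to verify the term-by-term inequality $\binom{k+l-1}{l}((1-p)^2/(p(2-p)))^l \le (l+1)(1-p)^l$, equivalently $\binom{k+l-1}{l}(1-p)^l \le (l+1)(p(2-p))^l$. Using the hypothesis in the form $(k-1)(1-p) \le p \le p(2-p)$ yields $(1-p)/(p(2-p))\le 1/(k-1)$, reducing the problem to proving $\binom{k+l-1}{l} \le (l+1)(k-1)^l$ for $0\le l\le k-1$ (the case $k=1$ being trivial). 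This last inequality follows from the elementary telescoping estimate: writing $\binom{k+l-1}{l}/(k-1)^l=\prod_{j=1}^{l}(1+j/(k-1))/l!$ and using $1+j/(k-1) \le 1+j$ (valid for $k\ge 2$), the product is bounded by $\prod_{j=1}^l(1+j)=(l+1)!$, giving the required factor $l+1$ after dividing by $l!$.

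Finally, for the second expectation $\mathbb{E}[c_k\mathbb{P}_k(W'=W-1|\{X_i\})]$, exchangeability of $(W,W')$ implies $\mathbb{P}(W'=W-1)=\mathbb{P}(W'=W+1)$, and iterating the tower property yields the same bound. The main obstacle will be spotting the correct term-by-term comparison, since neither side of the inequality above is a familiar closed form; once one re-writes $k(2-p)^{k-1}$ in the form $\sum_l(l+1)\binom{k}{l+1}(1-p)^l$, the condition $(k-1)(1-p)/p<1$ slots naturally into the geometric factor and the binomial-coefficient estimate $\binom{k+l-1}{l}\le(l+1)(k-1)^l$ handles the combinatorial piece, so the rest is essentially bookkeeping.
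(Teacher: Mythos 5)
Your proposal is correct, and after the common initial computation (deriving the explicit formula $\mathbb{E}[c_k\mathbb{P}_k(W'=W+1|\{X_i\})] = \frac{r(1-p)}{k(2-p)^k}\sum_{l=0}^{k-1}\binom{k}{l+1}\binom{k+l-1}{l}\bigl(\frac{(1-p)^2}{p(2-p)}\bigr)^l$), it diverges from the paper's proof in a genuine way. The paper splits the factor $\bigl(\frac{(1-p)^2}{p(2-p)}\bigr)^l$ differently, applies Lemma \ref{ak} once more to bound $\binom{k+l-1}{k-1}\bigl(\frac{1-p}{2-p}\bigr)^l \le 1$ (the mode-at-zero trick), closes the remaining geometric sum to get the single closed expression $\frac{1-p^k}{k(1-p)(p(2-p))^{k-1}}$, and then proves that expression is decreasing in $k$ and checks $k=3$ directly (the cases $k=1,2$ being handled separately). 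You instead expand $k(2-p)^{k-1}=\sum_l (l+1)\binom{k}{l+1}(1-p)^l$ via the binomial theorem and the identity $k\binom{k-1}{l}=(l+1)\binom{k}{l+1}$, and run a term-by-term comparison, which after inserting the hypothesis $\frac{1-p}{p(2-p)}<\frac{1}{k-1}$ reduces to the purely combinatorial inequality $\binom{k+l-1}{l}\le(l+1)(k-1)^l$ for $0\le l\le k-1$, proved cleanly by your telescoping product $\prod_{j=1}^l\bigl(1+\frac{j}{k-1}\bigr)\le(l+1)!$. Your route is arguably more elementary and self-contained: it avoids the monotonicity-in-$k$ analysis, the $k=3$ base check, and the small-$k$ case split, replacing them with a one-line combinatorial estimate. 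The paper's route, by contrast, reuses its Lemma \ref{ak} machinery and produces an intermediate closed form, which keeps the argument stylistically uniform with the rest of the section. The handling of the $W'=W-1$ term via exchangeability and the tower property is the same in both.
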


\begin{proof}
For $k=1,2$, the lemma is easy to verify, so assume $k\geq3$.  
Let $Y$ be a random variable distributed as negative binomial with parameters $p$ and $k$. Then we have
\begin{align}
c_k\mathbb{E}[\mathbb{P}_k&(W'=W+1|\{X_i\})] \notag\\
&= c_k(1-p)p^k\mathbb{E}\left[\binom{k+Y}{k-1}(1-p)^{Y}\right]\notag\\
&=c_k(1-p)p^k\frac{\sum_{l=0}^{k-1}\binom{k}{l+1}\binom{k+l-1}{l}\left(\frac{(1-p)^2}{p(2-p)}\right)^l}{(2-p)^k} \label{nbb}\\
&=\frac{r(1-p)}{2-p}\left(\frac{1}{k(2-p)^{k-1}}\right)\sum_{l=0}^{k-1}\binom{k}{l+1}\left(\frac{1-p}{p}\right)^l\binom{k+l-1}{k-1}
\left(\frac{1-p}{2-p}\right)^{l}. \notag
\end{align}

Noting first that $(k-1)(1-p)<1$, an application of Lemma \ref{ak} implies $\binom{k+l-1}{k-1}
\left(\frac{1-p}{2-p}\right)^{l}$ is at most one, yielding the following inequality.

\begin{align*}
c_k\mathbb{E}[\mathbb{P}_k&(W'=W+1|\{X_i\})] \\
&\leq \frac{r(1-p)}{2-p}\left(\frac{1}{k(2-p)^{k-1}}\right)\sum_{l=0}^{k-1}\binom{k}{l+1}\left(\frac{1-p}{p}\right)^l \\
&=\left(\frac{r(1-p)}{2-p}\right)\left(\frac{1-p^k}{k(1-p)(p\,(2-p))^{k-1}}\right).
\end{align*}
From the previous lines, it is enough to show for all $3\leq k \leq r$, the following term is at most one:
\begin{equation}\label{kin1}
\frac{1-p^k}{k(1-p)(p\,(2-p))^{k-1}}.
\end{equation}
The difference of (\ref{kin1}) applied at $k+1$ and $k$ is positively proportional to
\begin{equation}\label{krat}
k\left(\sum_{i=0}^kp^i\right)-(k+1)p\,(2-p)\left(\sum_{i=0}^{k-1}p^i\right).
\end{equation}
We will show that this difference is at most zero which implies (\ref{kin1})
is decreasing in $k$ so that it is enough to show the lemma holds in the case where $k=3$.
Notice that $\frac{(k-1)(1-p)}{p} < 1$ implies $k < 1/(1-p)$, so that 
\begin{align}
k\left(\sum_{i=0}^kp^i\right)&-(k+1)p\,(2-p)\left(\sum_{i=0}^{k-1}p^i\right) \notag \\
&=\left(\sum_{i=0}^{k-1}p^i\right)(k(1-p)^2-p\,(2-p))+kp^k \notag \\
&<\left(\sum_{i=0}^{k-1}p^i\right)(1-3p+p^2)+kp^k. \label{ratk}
\end{align}

The small $k$ condition for $k\geq3$ implies in particular that $2/3<p<1$ so that $1-3p+p^2<0$ which, starting from (\ref{ratk}), yields
\begin{align}
k\left(\sum_{i=0}^kp^i\right)&-(k+1)p\,(2-p)\left(\sum_{i=0}^{k-1}p^i\right)\notag \\
&<(1-3p+p^2)+(1-3p+p^2)(k-1)p^{k-1}+kp^k \notag\\
&=(1-3p+p^2)+p^{k-1}(k(1-p)^2-(1-3p+p^2)) \notag\\
&\leq 1-3p+p^2+p^k(2-p) \leq 1-3p+p^2+2p^3-p^4. \label{rak23}
\end{align}
The penultimate inequality follows from the fact noted above that $k < 1/(1-p)$, and the final inequality since $k\geq3$.  From this point it is
a straightforward calculus exercise to show the final term in (\ref{rak23}) is negative for $2/3<p<1$.

For the remaining term, exchangeability implies 
\begin{align}
\mathbb{E}[c_k\mathbb{P}_k(W'=W-1|\{X_i\})]=\mathbb{E}[c_k\mathbb{P}_k(W'=W+1|\{X_i\})], \label{pmeq}
\end{align}
which proves the lemma.
\end{proof}

In order to prove a result analogous to Lemma \ref{NBords} for the case of $k\geq 1/(1-p)$, the value of $p$ will be restricted.
Ideally, the values of $p$ under consideration should coincide with the values of $p$ where the Poisson distribution is
a good approximation to the negative binomial distribution.  First note that $Var(W)=\lambda+\lambda^2/r$ so that it is not
unreasonable to assume that $\lambda^2 \leq r$. We will instead use a stronger restriction; for the remainder of the section
take $3\lambda^2e^{\lambda} \leq r$.  This may seem like a demanding constraint, but from (\ref{NBapp}) it is clear that
in order for the negative binomial distribution to resemble a Poisson distribution, $e^\lambda$ should be close to $(1+\lambda/r)^r$.  
The next lemma shows that the assumption on $r$ is not unreasonable in lieu of the previous statement; it can be proved
by standard analysis using the Taylor expansion of the appropriate functions.

\begin{lemma}
For $r > \lambda^2 > 0$,
\[\left(\frac{e^\lambda \lambda^2}{7r}\right) \leq e^\lambda-\left(1+\frac{\lambda}{r}\right)^r \leq \left(\frac{e^\lambda \lambda^2}{2r}\right).\]
\end{lemma}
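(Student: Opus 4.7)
The plan is to divide by $e^\lambda$ and study $1 - f(\lambda)$, where $f(\lambda) := (1+\lambda/r)^r e^{-\lambda}$, so that the desired inequalities become $\lambda^2/(7r) \leq 1 - f(\lambda) \leq \lambda^2/(2r)$. The main tool is the derivative identity $f'(s) = -(s/r)(1+s/r)^{r-1}e^{-s}$, which combined with $f(0) = 1$ yields the integral representation
\[
e^\lambda - \left(1+\frac{\lambda}{r}\right)^{\!r} \,=\, e^\lambda \int_0^\lambda \frac{s}{r}\left(1+\frac{s}{r}\right)^{\!r-1}\! e^{-s}\,ds.
\]
For the upper bound, I would invoke the standard inequality $(1+s/r)^r \leq e^s$ to conclude $(1+s/r)^{r-1}e^{-s} \leq 1$, so the integral is at most $\int_0^\lambda (s/r)\,ds = \lambda^2/(2r)$, and multiplying by $e^\lambda$ finishes this direction.

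For the lower bound, the calculus inequality $\log(1+x) \geq x - x^2/2$ for $x \geq 0$ gives $(1+s/r)^r \geq e^{s - s^2/(2r)}$, hence $(1+s/r)^{r-1}e^{-s} \geq e^{-s^2/(2r)}/(1+s/r)$. Since $s \leq \lambda$, I would replace $1/(1+s/r)$ by the smaller constant $1/(1+\lambda/r)$ and pull it outside; the substitution $u = s^2/(2r)$ then evaluates the remaining integral as $1 - e^{-\lambda^2/(2r)}$, giving
\[
1 - f(\lambda) \,\geq\, \frac{1 - e^{-\lambda^2/(2r)}}{1 + \lambda/r}.
\]

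To finish, the hypothesis $r > \lambda^2$ combined with $r \geq 1$ (since $r$ is a positive integer in the negative-binomial context) forces $\lambda/r < 1$, so $1/(1+\lambda/r) > 1/2$, and $\lambda^2/(2r) < 1/2$ together with $1 - e^{-x} \geq x - x^2/2$ gives $1 - e^{-\lambda^2/(2r)} \geq 3\lambda^2/(8r)$. Multiplying yields $1 - f(\lambda) > 3\lambda^2/(16r) > \lambda^2/(7r)$. The main technical point is bookkeeping the constants: the natural computation produces $3/16 \approx 0.188$, comfortably larger than $1/7 \approx 0.143$, so no sharp estimate is required, and the two elementary inequalities $\log(1+x) \geq x - x^2/2$ and $1 - e^{-x} \geq x - x^2/2$ suffice throughout.
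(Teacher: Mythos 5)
The paper does not actually give a proof of this lemma---it only remarks that the statement ``can be proved by standard analysis using the Taylor expansion of the appropriate functions''---so there is nothing to compare against. Your argument is a correct and clean realization of that suggestion: the integral representation via $f'(s) = -(s/r)(1+s/r)^{r-1}e^{-s}$ with $f(0)=1$ is right, the upper bound via $(1+s/r)^r \leq e^s$ gives exactly $\lambda^2/(2r)$, and the chain $\log(1+x)\geq x-x^2/2$, $1-e^{-x}\geq x-x^2/2$ together with $\lambda^2/r<1$ yields the constant $3/16 > 1/7$ as you compute.

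One small point worth flagging: the lemma's literal hypothesis $r > \lambda^2 > 0$ does not by itself force $\lambda/r < 1$ when $\lambda < 1$ (e.g.\ $\lambda = 1/2$, $r = 3/10$), so the step $1/(1+\lambda/r) > 1/2$ genuinely needs the supplementary fact that $r$ is a positive integer. You note this correctly; it is valid in the paper's negative-binomial context, but it does mean your lower-bound argument covers the intended range rather than all real $r > \lambda^2$, and someone transplanting the lemma elsewhere should be aware of the hidden $r \geq 1$.
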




\begin{remark}
It has been shown \cite{bp99} that $\norm{W-Poi_\lambda}_{TV}\leq \lambda/r$, which implies that for some values of $p$ and $r$
the restriction $3\lambda^2e^\lambda \leq r$ is an overly demanding constraint. It is an interesting problem to consider
what is the minimum constraint that will yield results analogous to Section \ref{binpoisec} and how it relates to
the proximity of the negative binomial to the Poisson distribution.
\end{remark}

\begin{lemma}\label{NBord}
For $3\lambda^2e^\lambda \leq r$ and $k\geq 1/(1-p)$,
both of $\mathbb{E}[c_k\mathbb{P}_k(W'=W+1|\{X_i\})]$ and $\mathbb{E}[c_k\mathbb{P}_k(W'=W-1|\{X_i\})]$ are bounded above by
\[\frac{r(1-p)}{2-p}.\]
\end{lemma}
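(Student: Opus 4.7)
The plan is to mirror the proof of Lemma \ref{NBords} as much as possible, and then to adapt the final step to the large-$k$ regime. By exchangeability (exactly as in (\ref{pmeq})), it suffices to bound $\mathbb{E}[c_k\mathbb{P}_k(W'=W+1\mid\{X_i\})]$. The representation (\ref{nbb}) holds in the present regime as well, since it was derived from Lemma \ref{NBdef} and the expectation lemma that immediately precedes Lemma \ref{NBords}, neither of which imposed any hypothesis on $k$ or $p$. After substituting the definition (\ref{cneg}) of $c_k$ and the relation $\lambda = r(1-p)/p$, the inequality $\mathbb{E}[c_k\mathbb{P}_k(W'=W+1\mid\{X_i\})] \le r(1-p)/(2-p)$ reduces to the purely combinatorial statement
$$
\frac{a_k\,(1-p)^{1-a_k}}{k\binom{k+a_k-2}{k-1}(2-p)^{k-1}}
\sum_{l=0}^{k-1}\binom{k}{l+1}\binom{k+l-1}{l}\left(\frac{(1-p)^2}{p(2-p)}\right)^l \le 1,
$$
where now $a_k = \lfloor (k-2)(1-p)/p\rfloor$ can be strictly larger than one.

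The obstacle is that the argument of Lemma \ref{NBords}, which used Lemma \ref{ak} to bound $\binom{k+l-1}{l}((1-p)/(2-p))^l\le 1$ termwise, breaks down once $k\geq 1/(1-p)$: the mode of this sequence has drifted away from zero, and the peak value is no longer one but is of order $\binom{k+a_k-2}{k-1}(1-p)^{a_k}$. My plan is to reindex $l = a_k + m$ around the mode and then to observe that the prefactor $\bigl(\binom{k+a_k-2}{k-1}(1-p)^{a_k}\bigr)^{-1}$ is engineered precisely to cancel the peak summand, so that the remaining sum consists only of deviations from the mode. These deviations are controlled through the explicit consecutive ratio $(k+l)(1-p)^2/((l+1)\,p\,(2-p))$, which is $\le 1$ for $l\ge a_k$ and $\ge 1$ for $l< a_k$; this yields geometric decay in $|m|$ on either side.

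The main technical hurdle is turning this geometric decay into an absolute bound, and this is where the hypothesis $3\lambda^2 e^\lambda \le r$ enters. Rewriting it as $(1-p)/p = \lambda/r \le 1/(3\lambda e^\lambda)$, the weight $(1-p)^2/(p(2-p))$ becomes uniformly small, and the lemma immediately preceding the statement (comparing $(1+\lambda/r)^r$ to $e^{\lambda}$) implies that the sum $S$ is well approximated by the corresponding Poisson$(\lambda)$ moment. Heuristically, $S$ differs from its Poisson analogue by a multiplicative factor at most $e^\lambda\lambda^2/(2r)$, which by hypothesis is at most $1/6$; this is the slack that absorbs the large binomial coefficient $\binom{k+a_k-2}{k-1}$ appearing in the denominator of $c_k$. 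I expect the hardest step to be making this Poisson comparison quantitative uniformly in $k$ and $a_k$, since the natural bounds on $\binom{k+a_k-2}{k-1}$ involve the same $e^{\lambda}$-type exponential, and the hypothesis $r\ge 3\lambda^2 e^{\lambda}$ has been chosen essentially to allow a one-step cancellation of this factor. Combined with exchangeability, this completes the bound for both expectations.
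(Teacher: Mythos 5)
Your starting reduction is correct---exchangeability and (\ref{nbb}) do carry over to this regime, and the displayed combinatorial inequality is the right target---but the plan that follows has both a concrete error and an unclosed gap that would prevent it from being turned into a proof.

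The error is in the mode analysis. The quantity $a_k=\max(1,\lfloor(k-2)(1-p)/p\rfloor)$ is the mode of a negative binomial distribution with parameters $p$ and $k-1$; that is why it appears in the definition (\ref{cneg}) of $c_k$. But the weights $\binom{k+l-1}{l}\bigl((1-p)^2/(p(2-p))\bigr)^l$ in your reduced sum are (unnormalized) negative binomial probabilities with parameters $q=p(2-p)$ and $k$, whose mode sits near $\lfloor(k-1)(1-p)^2/(p(2-p))\rfloor$, which is not $a_k$. Moreover the summand also carries the factor $\binom{k}{l+1}$, so the consecutive ratio of the full summand is $\frac{(k-l-1)(k+l)}{(l+2)(l+1)}\cdot\frac{(1-p)^2}{p(2-p)}$, not the ratio $\frac{(k+l)(1-p)^2}{(l+1)p(2-p)}$ you quote. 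Consequently the prefactor $\bigl(\binom{k+a_k-2}{k-1}(1-p)^{a_k}\bigr)^{-1}$ does not ``cancel the peak summand''---substituting $l=a_k$ into the product and simplifying does not yield $1$---and the proposed reindexing around $l=a_k$ loses exactly the cancellation it was designed to produce. Beyond the error, you explicitly defer ``making the Poisson comparison quantitative uniformly in $k$ and $a_k$,'' calling it the hardest step and not attempting it; since $\binom{k+a_k-2}{k-1}$ can be exponentially large in $k$, the suggestion that a multiplicative slack of $1/6$ ``absorbs'' it cannot be taken at face value, and the absorption would have to come from the very cancellation that is not in place.

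The paper's proof runs differently and avoids all of this. It bounds the fraction $\frac{\binom{k+l-1}{k-1}(1-p)^l}{\binom{k+a_k-2}{k-2}(1-p)^{a_k}}$ by $1/p$ termwise, using Lemma \ref{ak} twice (first with $r=k$ to locate the maximum at the auxiliary value $b$ from Lemma \ref{NB+}, then with $r=k-1$); the hypothesis $k\geq1/(1-p)$ is precisely what makes this $1/p$ bound hold when $b=1$. The remaining sum collapses via the binomial theorem to $\bigl(\tfrac{1-p}{(2-p)p}+1\bigr)^k-1$, and the resulting closed-form quantity is shown by a ratio test to be increasing in $k$, reducing the claim to the single endpoint $k=r$. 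Only at that endpoint is $3\lambda^2e^\lambda\leq r$ invoked, together with the standard comparison of $(1+\lambda/r)^r$ with $e^\lambda$. The termwise $1/p$ bound, the binomial-theorem collapse, the monotonicity-in-$k$ reduction, and the confinement of the $3\lambda^2e^\lambda\leq r$ hypothesis to the case $k=r$ are all missing from your plan.
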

\begin{proof}
Let $Y$ be a random variable distributed as negative binomial with parameters $p$ and $k$. Then continuing from (\ref{nbb}), for $k \geq2$,
\begin{align*}
c_k\mathbb{E}[\mathbb{P}_k&(W'=W+1|\{X_i\})] \\
&=\frac{r(1-p)}{2-p}\left(\frac{(k-1)(1-p)}{k(2-p)^{k-1}}\right)\sum_{l=0}^{k-1}\binom{k}{l+1}\left(\frac{1-p}{p(2-p)}\right)^l\frac{\binom{k+l-1}{k-1}
\left(1-p\right)^{l}}{\binom{k+a_k-2}{k-2}(1-p)^{a_k}}. \\
\end{align*}
Using the definition of $a_k$, Lemma \ref{ak}, and an argument similar to the use of the constant $b$ in the proof of Lemma
\ref{NB+} (using the fact that $k\geq 1/(1-p)$), we obtain an upper bound of $1/p$ on the appropriate fraction in each summand,
which yields the following inequality.
\begin{align*}
c_k\mathbb{E}[\mathbb{P}_k&(W'=W+1|\{X_i\})] \\
&\leq \frac{r(1-p)}{2-p}\left(\frac{(k-1)}{k(2-p)^{k-2}}\right)\sum_{l=0}^{k-1}\binom{k}{l+1}\left(\frac{1-p}{p(2-p)}\right)^{l+1} \\
&=\left(\frac{r(1-p)}{2-p}\right)\left(\frac{(k-1)}{k(2-p)^{k-2}}\right)\left(\left(\frac{(1-p)}{(2-p)p}+1\right)^k-1\right).
\end{align*}
From the previous lines, it is enough to show for all $2\leq k \leq r$,
\begin{equation}\label{kin}
\left(\frac{(k-1)}{k(2-p)^{k-2}}\right)\left(\left(\frac{(1-p)}{(2-p)p}+1\right)^k-1\right)\leq1.
\end{equation}
The ratio of successive terms is equal to
\[\left(\frac{k^2}{k^2-1}\right)
\frac{\left(\left(\frac{1+p-p^2}{p(2-p)^2}\right)\left(\frac{1+p-p^2}{p(2-p)}\right)^k-(2-p)^{-1}\right)}{\left(\frac{1+p-p^2}{p(2-p)}\right)^k-1},\]
which is at least one.  Thus it is enough to show the inequality (\ref{kin}) for $k=r$. Now, substituting
$p=r/(\lambda+r)$ into (\ref{kin}) yields
\begin{align}
\left(\frac{r-1}{r}\right)&\left(1-\frac{\lambda}{2\lambda+r}\right)^{r-2}\left(\left(1+\frac{\lambda(\lambda+r)}{r(2\lambda+r)}\right)^r-1\right)\notag \\
&\leq e^{-\lambda}\left(\frac{2\lambda+r}{\lambda+r}\right)^{2\lambda+2}\left(e^{\lambda}-1\right). \label{abc}
\end{align}
For the inequality we use the fact that $(1+x/n)^n \leq (1+x/(n+1))^{n+1} \leq e^x$ if $n+x$ is positive and $n \geq 1$.
The term (\ref{abc}) is clearly decreasing in $r$; by using the restriction on the value of $r$ and then the inequality $\log(1+x)\leq x$, we have
\begin{align}
\left(\frac{r-1}{r}\right)&\left(1-\frac{\lambda}{2\lambda+r}\right)^{r-2}\left(\left(1+\frac{\lambda(\lambda+r)}{r(2\lambda+r)}\right)^r-1\right) \notag \\
&\leq \exp\left\{(2\lambda+2)\log\left(1+\frac{1}{1+3\lambda e^\lambda}\right)\right\}\left(1-e^{-\lambda}\right) \notag \\
&\leq \exp\left\{\frac{2\lambda+2}{1+3\lambda e^\lambda}\right\}\left(1-e^{-\lambda}\right).\label{abc1}
\end{align}
Taking the natural logarithm of (\ref{abc1}), we have
\begin{align}
\frac{2\lambda+2}{1+3\lambda e^\lambda}+\log\left(1-e^{-\lambda}\right)&=\frac{2\lambda+2}{1+3\lambda e^\lambda} -\sum_{i\geq1}\frac{e^{-i\lambda}}{i}. \label{abc2} 
\end{align}
The final expression is smaller than any partial sum, and it is easy to see by only taking one term in the sum (\ref{abc2}) is negative for
$\lambda \geq 2$, and taking three terms yields the proper inequality for $\lambda\geq 1$.

For the remaining term, the equation (\ref{pmeq}) continues to hold in this case, which proves the lemma.
\end{proof}

\begin{theorem}\label{NBinc}
For the values of $c_k$ defined previously, and the set of $k$ where either $3\lambda^2e^\lambda \leq r$ and $k\geq 1/(1-p)$ or $k<1/(1-p)$, the error term from Theorem \ref{cdm05} is minimized for $k=1$.
\end{theorem}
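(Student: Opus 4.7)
The plan is to assemble the preceding lemmas into a short concluding argument. First I would observe that the careful choice of $c_k$ in (\ref{cneg}), via Lemmas \ref{NB+} and \ref{NB-}, ensures that the expressions inside the absolute value signs in the error bound (\ref{cdme}) are almost surely non-negative. Consequently the absolute values may be dropped, and linearity of expectation together with $\mathbb{E}W=\lambda$ reduces the bound to
\begin{equation*}
C_\lambda\Bigl[\bigl(\lambda-\mathbb{E}[c_k\mathbb{P}_k(W'=W+1|\{X_i\})]\bigr)+\bigl(\lambda-\mathbb{E}[c_k\mathbb{P}_k(W'=W-1|\{X_i\})]\bigr)\Bigr].
\end{equation*}
Invoking the exchangeability identity (\ref{pmeq}), the two expectations coincide, so the error bound collapses to
\begin{equation*}
2C_\lambda\bigl[\lambda-\mathbb{E}[c_k\mathbb{P}_k(W'=W+1|\{X_i\})]\bigr].
\end{equation*}

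Minimizing the error in $k$ is therefore equivalent to maximizing $\mathbb{E}[c_k\mathbb{P}_k(W'=W+1|\{X_i\})]$ in $k$. Lemmas \ref{NBords} and \ref{NBord} supply a uniform upper bound of $r(1-p)/(2-p)$ on this expectation under the hypothesis of the theorem (splitting into the $k<1/(1-p)$ and $k\geq1/(1-p)$ regimes, with the restriction $3\lambda^2e^\lambda\leq r$ used only in the latter case). It therefore suffices to verify that this upper bound is actually attained at $k=1$.

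To carry out this last step I would evaluate the $k=1$ case directly. Here $a_1=1$, so $c_1=\lambda/(1-p)=r/p$, and Lemma \ref{NBdef} (with only the $l=0$ term contributing) gives
\begin{equation*}
\mathbb{P}_1(W'=W+1|\{X_i\})=\frac{1}{r}\sum_{j=1}^r(1-p)^{X_j+1}p.
\end{equation*}
Since the $X_j$ are i.i.d.\ geometric, $\mathbb{E}[(1-p)^{X_1}]=p/(2-p)$, so
\begin{equation*}
\mathbb{E}[c_1\mathbb{P}_1(W'=W+1|\{X_i\})]=\frac{r}{p}\cdot(1-p)p\cdot\frac{p}{2-p}\cdot\frac{1}{p}\cdot p=\frac{r(1-p)}{2-p},
\end{equation*}
matching the upper bound of Lemmas \ref{NBords}, \ref{NBord}. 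This finishes the argument.

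There is no real obstacle in this concluding step; the genuine technical difficulty of the theorem is already absorbed into Lemmas \ref{NBords} and \ref{NBord}, where the analysis of the sum in the closed form of $\mathbb{E}[c_k\mathbb{P}_k(W'=W+1|\{X_i\})]$ splits into the two $k$-regimes and requires the restriction $3\lambda^2 e^\lambda\le r$ in the large-$k$ regime. Once those bounds are in hand, the theorem itself follows essentially by bookkeeping and a direct one-line computation at $k=1$.
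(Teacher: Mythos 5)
Your proposal is correct and follows essentially the same route as the paper: drop the absolute values via Lemmas \ref{NB+} and \ref{NB-}, reduce the bound via exchangeability (\ref{pmeq}) to $2C_\lambda(\lambda-\mathbb{E}[c_k\mathbb{P}_k(W'=W+1|\{X_i\})])$, invoke Lemmas \ref{NBords} and \ref{NBord} for the uniform bound $r(1-p)/(2-p)$, and check that $k=1$ attains it. One small arithmetic slip in the final check: $\mathbb{E}[(1-p)^{X_1}]=\sum_{i\ge0}(1-p)^{2i}p=1/(2-p)$, not $p/(2-p)$, and your displayed product $\frac{r}{p}\cdot(1-p)p\cdot\frac{p}{2-p}\cdot\frac{1}{p}\cdot p$ actually simplifies to $r(1-p)p/(2-p)$; the correct chain is $c_1\cdot p(1-p)\cdot\frac{1}{2-p}=\frac{r}{p}\cdot p(1-p)\cdot\frac{1}{2-p}=\frac{r(1-p)}{2-p}$, which matches the paper's asserted value, so the conclusion stands.
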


\begin{proof}
This follows directly from Lemmas \ref{NBords} and \ref{NBord} and the easily verified fact
\[\mathbb{E}[c_1\mathbb{P}_1(W'=W+1|\{X_i\})]=\mathbb{E}[c_1\mathbb{P}_1(W'=W-1|\{X_i\})]=\frac{r(1-p)}{2-p}.\]
\end{proof}

\section{Acknowledgements}
The author thanks Jason Fulman for the suggestion to write on this topic and an anonymous referee for detailed comments which greatly
improved this work.

\def\polhk#1{\setbox0=\hbox{#1}{\ooalign{\hidewidth
  \lower1.5ex\hbox{`}\hidewidth\crcr\unhbox0}}}

\end{document}